\numberwithin{equation}{section}
\theoremstyle{plain}
\newtheorem{theorem}{Theorem}[section]
\newtheorem{theoremintro}{Theorem}
\newtheorem{lemma}[theorem]{Lemma}
\newtheorem{prop}[theorem]{Proposition}
\newtheorem{corollary}[theorem]{Corollary}
\theoremstyle{definition}
\newtheorem{definition}[theorem]{Definition}
\newtheorem{remark}[theorem]{Remark}
\newtheorem{example}[theorem]{Example}
\newcommand{\e}{\varepsilon}
\newcommand{\p}{\mathfrak p}
\newcommand{\Z}{\mathbb Z}
\renewcommand{\P}{\mathbb P}
\renewcommand{\SS}{\mathcal S}
\newcommand{\ZZ}{\mathcal Z}
\newcommand{\GG}{\mathcal G}
\newcommand{\LL}{\mathcal L}
\newcommand{\TT}{\mathcal T}
\newcommand{\PP}{\mathcal P}
\newcommand{\QQ}{\mathcal Q}
\newcommand{\XX}{\mathcal X}
\newcommand{\YY}{\mathcal Y}
\renewcommand{\k}{\mathsf k}
\newcommand{\mmod}{\mathrm{{-}mod}}
\newcommand{\modd}{\mathrm{mod{-}}}
\newcommand{\grmod}{\mathrm{grmod{-}}}
\newcommand{\xra}{\xrightarrow}
\renewcommand{\le}{\leqslant}
\renewcommand{\ge}{\geqslant}
\renewcommand{\~}{\widetilde}
\newcommand{\bul}{\bullet}
\DeclareMathOperator{\Hom}{\textup{Hom}}
\DeclareMathOperator{\Ext}{\textup{Ext}}
\DeclareMathOperator{\End}{\mathrm{End}}
\DeclareMathOperator{\Spec}{\mathrm{Spec}}
\DeclareMathOperator{\coker}{\mathrm{coker}} 
\DeclareMathOperator{\im}{\mathrm{im}}
\DeclareMathOperator{\rank}{\mathrm{rank}}
\DeclareMathOperator{\Supp}{\mathrm{Supp}}
\DeclareMathOperator{\coh}{\mathrm{coh}}
\DeclareMathOperator{\Perf}{\mathrm{Perf}}
\DeclareMathOperator{\op}{\mathrm{op}}
\DeclareMathOperator{\Proj}{\mathrm{Proj}}
\DeclareMathOperator{\pd}{\mathrm{pd}}
\DeclareMathOperator{\codim}{\mathrm{codim}}
\DeclareMathOperator{\Mat*}{\Hom_{\mathrm{ff}}}
\author{Alexey Elagin}
\address{Institute for Information Transmission Problems (Kharkevich Institute), Moscow, RUSSIA\\
National Research University Higher School of Economics, Moscow, RUSSIA}
\email{alexelagin@rambler.ru}
\author{Valery A.~Lunts}
\address{
  Department of Mathematics\\
  Indiana University\\
  Rawles Hall\\
  831 East 3rd Street\\
  Bloomington, IN 47405\\
  USA
}
\email{vlunts@indiana.edu}
\title[Derived categories of zero-dimensional schemes]
{Derived categories of coherent sheaves on some zero-dimensional schemes}
\begin{document}

\begin{abstract}
Let $X_N$ be the second infinitesimal neighborhood of a closed point in $N$-dimensional affine space. In this note we study $D^b(\coh X_N)$, the bounded derived category of coherent sheaves on $X_N$.
We show that for $N\geq 2$ the lattice of triangulated subcategories in $D^b(\coh X_N)$ has a rich structure (which is probably wild), in contrast to the case of zero-dimensional complete intersections.  Our homological methods produce some applications to universal localizations of free associative algebras. These applications are based on  a relation between  triangulated subcategories in $D^b(\coh X_N)$ and universal localizations of a free graded associative algebra in $N$ variables.
\end{abstract}

\maketitle


\section{Introduction}

In this note we study bounded derived categories of coherent sheaves on affine Noetherian schemes. Compared to more popular derived categories of projective schemes, they demonstrate quite different properties. For example, $\Hom$ spaces in derived categories of affine schemes are usually not finite-dimensional/of finite type, these derived categories do not admit nontrivial semi-orthogonal decompositions or interesting autoequivalences.

More concretely, we are interested in thick triangulated subcategories in derived categories of affine schemes. Can these be classified? The starting point here is a remarkable theorem by Hopkins and Neeman \cite{HN}:

\begin{theorem}
\label{theorem_Hopkins}
Let $R$ be a Noetherian ring and $X=\Spec R$. Then there is a bijection
\begin{align*}
\left\{\text{thick triangulated subcategories in}\,\,\Perf X \right\}
&\longleftrightarrow
\left\{\text{subsets in}\,\, X,\,\,\text{closed under specialization} \right\},\\
T & \mapsto Z_T:=\cup_{F\in T}\Supp F,\\
T_Z:=\{F\in \Perf X\mid \Supp F\subset Z\} & \gets Z.
\end{align*}
The above bijection restricts to a bijection between thick triangulated finitely generated subcategories in $\Perf X$ and closed subsets in $X$.
\end{theorem}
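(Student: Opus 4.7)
The plan is to verify that the two maps are mutually inverse bijections, with essentially all the content sitting in one direction. For \emph{well-definedness}: a perfect complex $F$ is quasi-isomorphic to a bounded complex of finitely generated projective $R$-modules, so $\Supp F$ is closed in $X$; hence $Z_T$ is a union of closed sets and in particular specialization-closed. The identities $\Supp(\mathrm{cone}(\phi)) \subset \Supp A \cup \Supp B$, $\Supp(A[1]) = \Supp A$ and $\Supp(A \oplus B) = \Supp A \cup \Supp B$ make $T_Z$ a thick triangulated subcategory. Three of the four required inclusions are then formal: $T \subset T_{Z_T}$ and $Z_{T_Z} \subset Z$ follow from the definitions, and $Z \subset Z_{T_Z}$ is witnessed, at each $\p \in Z$, by the Koszul complex on a finite generating set of $\p$, which is perfect with support equal to $V(\p) \subset Z$ (the last containment uses specialization-closedness of $Z$).

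\emph{The hard inclusion $T_{Z_T} \subset T$.} Given $F \in \Perf X$ with $\Supp F \subset Z_T$, one must show $F \in T$. Since $\Supp F$ is closed and quasi-compact, I choose finitely many $G_1, \ldots, G_k \in T$ whose supports cover $\Supp F$; their direct sum $G := G_1 \oplus \cdots \oplus G_k$ lies in $T$ and satisfies $\Supp F \subset \Supp G$. The theorem thus reduces to the key lemma: \emph{if $F, G \in \Perf X$ satisfy $\Supp F \subset \Supp G$, then $F$ belongs to the thick triangulated subcategory generated by $G$.} Proving this lemma is the main obstacle. The proof combines two ingredients: first, $\Perf X$ is rigid tensor-triangulated, so every thick subcategory is automatically a tensor ideal (since $F \otimes^{L} (-)$ preserves $\Perf X$ for perfect $F$); second, a Koszul/tensor-nilpotence argument that realises $F$ through iterated cones and direct summands in terms of $G$ and Koszul complexes on local equations cutting out the complement of $\Supp G$.

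\emph{Restriction to the finitely generated case.} If $T = \langle G_1, \ldots, G_k \rangle_{\mathrm{thick}}$, then $Z_T = \bigcup_i \Supp G_i$ is a finite union of closed sets, hence closed. Conversely, for closed $Z = V(I)$ the Koszul complex on a finite generating set of $I$ is a single generator of $T_Z$. Combined with the main bijection, this gives the asserted restricted bijection between finitely generated thick triangulated subcategories and closed subsets of $X$.
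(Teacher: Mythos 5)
The paper itself does not prove this theorem: it is quoted from Hopkins--Neeman \cite{HN}, so your attempt can only be judged against the standard proofs in the literature. The formal scaffolding in your proposal is correct: closedness of $\Supp F$ for perfect $F$, the support identities making $T_Z$ thick and triangulated, the inclusions $T\subset T_{Z_T}$ and $Z_{T_Z}\subset Z$, the Koszul-complex witnesses giving $Z\subset Z_{T_Z}$, and the deduction of the finitely generated/closed-subset refinement. One justification, however, is wrong as stated: you cannot get finitely many $G_1,\dots,G_k\in T$ with $\Supp F\subset\bigcup_i\Supp G_i$ from ``$\Supp F$ is closed and quasi-compact,'' because $Z_T$ is a union of \emph{closed} sets, not an open cover, and quasi-compactness says nothing about closed covers (every space is the union of its points). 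The correct reduction uses Noetherianity differently: $\Supp F$ has finitely many irreducible components with generic points $\mathfrak p_1,\dots,\mathfrak p_m\in Z_T$; each $\mathfrak p_j$ lies in $\Supp G_j$ for some $G_j\in T$, and since $\Supp G_j$ is closed it contains $V(\mathfrak p_j)$, whence $\Supp F\subset\Supp(\oplus_j G_j)$. This is a one-line fix, but the argument you wrote would not stand.

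The genuine gap is the key lemma itself: that $\Supp F\subset\Supp G$ for perfect $F,G$ forces $F\in\langle G\rangle$. This statement \emph{is} the Hopkins--Neeman theorem --- everything else in your write-up is routine bookkeeping --- and you do not prove it; you only name ingredient families (``rigid tensor structure makes thick subcategories tensor ideals,'' which is indeed easy since $\Perf X=\langle R\rangle$, and ``a Koszul/tensor-nilpotence argument''). What is missing is the actual mechanism: either Thomason-style tensor nilpotence (a map of perfect complexes vanishing at every residue field is tensor-nilpotent) combined with the d\'evissage replacing $G$ by the Koszul complex on equations cutting out $\Supp G$, or Neeman's route through localizing subcategories of $D(R)$, showing $F$ lies in the localizing subcategory generated by $G$ and then invoking the theorem that compact objects of a compactly generated localizing subcategory are summands of finite extensions of the generators (Theorem 2.1 of \cite{Ne2}). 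As it stands, your proposal reduces the theorem to an unproven assertion that carries essentially all of its depth, so it is an outline of the standard strategy rather than a proof.
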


For general affine scheme $X$ there is no classification of thick triangulated subcategories in $D^b(\coh X)$. Of course, for affine schemes $X$ such that $\Perf X=D^b(\coh X)$ (for example, $X$ can be a regular affine scheme of finite Krull dimension) all thick triangulated subcategories in $D^b(\coh X)$ are also described by Theorem~\ref{theorem_Hopkins}. Also, for some  schemes $X$ there is a classification of
thick triangulated subcategories in the category of singularities
$$D_{sg}(X):=D^b(\coh X)/\Perf X.$$
This is equivalent to classifying thick triangulated subcategories in $D^b(\coh X)$ containing $\Perf X$.
For example, one has the following result by R.\,Takahashi and G.\,Stevenson for affine hypersurfaces.
\begin{theorem}[See {\cite[Th. 6.13]{St}}]
\label{th_lochyper}
Let $R$ be a Noetherian ring which is locally a hypersurface (i.e., any localization of $R$ at a prime ideal is isomorphic to a quotient by a principal ideal of a regular Noetherian ring), let $X=\Spec R$. Then there is an order preserving bijection between thick triangulated subcategories in $D_{sg}(X)$ and specialization-closed subsets in $\mathrm{Sing}\, X$.
\end{theorem}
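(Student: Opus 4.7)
The plan is to adapt the Hopkins--Neeman classification scheme to the singularity category, building on the $\Perf X$-module structure carried by $D_{sg}(X)$. For each $F \in D_{sg}(X)$ I would define its singular support
$$\mathrm{supp}(F) := \{\p \in \Spec R \mid F_\p \ne 0 \text{ in } D_{sg}(R_\p)\},$$
observing that $\mathrm{supp}(F)$ lies inside $\mathrm{Sing}\,X$ (since $F_\p$ is automatically perfect when $R_\p$ is regular) and that it is specialization-closed (because localization is exact). The candidate bijection sends a thick subcategory $T$ to $Z_T := \bigcup_{F\in T} \mathrm{supp}(F)$, with inverse $Z \mapsto T_Z := \{F \mid \mathrm{supp}(F) \subseteq Z\}$. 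That both maps are well-defined and order-preserving is routine; the content is in checking they are mutually inverse.

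For the identity $Z = Z_{T_Z}$, given any $\p \in Z$ one needs a coherent sheaf $F$ on $X$ whose singular support equals $\overline{\{\p\}} \subseteq Z$. I would use the residue field $k(\p)$, regarded as an $R$-module: its ordinary support is $\overline{\{\p\}}$, and its stalk $k(\p)_\p$ is nonzero over the singular local ring $R_\p$, hence not perfect, and so defines a nonzero class in $D_{sg}(R_\p)$.

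The harder identity $T = T_{Z_T}$ combines a \emph{local-to-global principle}---$F \in T$ iff each stalk $F_\p$ lies in the thick subcategory generated by the stalks of objects of $T$---with the \emph{local classification} for a local hypersurface, due to Takahashi, which reflects the structure of matrix factorizations over hypersurfaces. Granting these, one localizes at each $\p \in \mathrm{Sing}\,X$, matches thick subcategories with their supports by the local result, and reassembles via the local-to-global principle. The main obstacle is this local-to-global principle: one inclusion is clear, but detecting a globally defined thick subcategory by its stalks is delicate. Stevenson's framework accomplishes this using the tensor action of $\Perf X$ on $D_{sg}(X)$ together with Balmer--Favi idempotents attached to the points of $X$; the hypersurface hypothesis enters through the fact that each $D_{sg}(R_\p)$ admits sufficiently controlled generators (built from residue fields), which ensures that support actually detects membership.
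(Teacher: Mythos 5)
This statement is not proved in the paper at all: it is quoted as background and attributed to Takahashi and Stevenson, with the proof deferred to \cite[Th.~6.13]{St}. So there is no internal argument to compare against; what can be assessed is whether your sketch would stand as a proof, and it does not yet. You have correctly reproduced the architecture of the cited proof (a support theory landing in $\mathrm{Sing}\,X$, the assignments $T\mapsto Z_T$ and $Z\mapsto T_Z$, the local classification over hypersurface local rings via matrix factorizations, and a local-to-global principle coming from the action of $\Perf X$ on $D_{sg}(X)$ with Balmer--Favi idempotents), but the two pivotal ingredients are invoked rather than established. The local-to-global principle --- that membership of $F$ in a thick subcategory of $D_{sg}(X)$ is detected by its support, equivalently that $T = T_{Z_T}$ --- is exactly the hard content of Stevenson's paper, and your text explicitly says ``granting these''; similarly the local statement over a hypersurface local ring is cited, not argued. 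As written, the proposal is an accurate roadmap of the known proof, not a proof.

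Two smaller points. First, the residue field $k(\p)$ is not a coherent sheaf on $X$ unless $\p$ is maximal, so it is not an object of $D^b(\coh X)$; to witness $\p\in Z_{T_Z}$ you should use $R/\p$, whose localization at $\p$ is $k(\p)$ and hence nonzero in $D_{sg}(R_\p)$ when $R_\p$ is singular. Second, the support of $R/\p$ in this sense is contained in $\overline{\{\p\}}\cap\mathrm{Sing}\,X$ rather than equal to $\overline{\{\p\}}$; this is harmless for the inclusion $Z\subseteq Z_{T_Z}$ precisely because $Z$ is assumed specialization-closed, but the equality you assert is not what you need and is not true as stated. Finally, note that your stalkwise definition of support must be shown to coincide with (or at least be usable in place of) the support defined through the tensor action; that comparison is itself part of the work in the cited source.
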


There is a generalization of Theorem~\ref{th_lochyper} to local complete intersections, see  \cite[Theorem 8.8 and Corollary 10.5]{St}.

\medskip
Consider the ``extremal case'': let $X$ be a zero-dimensional connected scheme. Then  $\Perf X$ has only two thick triangulated subcategories: $0$ and $\Perf X$ by Theorem~\ref{theorem_Hopkins}. Still  $D^b(\coh X) $ can have quite many ones. In some cases they are classified:

\begin{theorem}[See {\cite[Th. 5.6, Remark 5.12]{CI}}]
\label{theorem_CI} Let $\k$ be a field and let $R$ be a complete intersection ring of dimension zero:
$$R=\k[z_1,\ldots,z_r]/(f_1,\ldots,f_r)$$
where $f_1,\ldots,f_r$ is a regular sequence in $(z_1,\ldots,z_r)^2$.
Let  $\k[\theta_1,\ldots,\theta_r]$ be the graded polynomial algebra with $\deg \theta_i=1$. Denote by $\Spec^*\k[\theta_1,\ldots,\theta_r]$ the set of prime homogeneous ideals in  $\k[\theta_1,\ldots,\theta_r]$ with Zariski topology. Then there is a bijection between thick triangulated subcategories in $D^b(R\mmod)$ and specialization-closed subsets in $\Spec^*\k[\theta_1,\ldots,\theta_r]$. This bijection preserves inclusion of subcategories/subsets. Under this bijection finitely generated subcategories correspond to closed subsets.
\end{theorem}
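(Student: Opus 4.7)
The plan is to follow the classical support-variety approach for complete intersections, relying on the central cohomology operators of Eisenbud and Gulliksen attached to the regular sequence $f_1,\ldots,f_r$. For our zero-dimensional complete intersection $R$ these operators make $\Ext^*_R(M,\k)$ into a finitely generated graded module over $\k[\theta_1,\ldots,\theta_r]$ (with the $\theta_i$ placed in degree $1$ after the standard regrading), and I would define the support variety of $M\in D^b(R\mmod)$ by
\[
V(M) := \Supp_{\k[\theta_1,\ldots,\theta_r]} \Ext^*_R(M,\k) \ \subset\ \Spec^*\k[\theta_1,\ldots,\theta_r].
\]
Standard properties of $\Ext$ then show that $V(M)$ is closed under specialization, is subadditive on triangles, and is stable under shifts and direct summands.

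From these formal properties one gets an order-preserving pair of maps
\[
T \longmapsto V_T := \bigcup_{M\in T} V(M), \qquad V \longmapsto T_V := \{M : V(M)\subseteq V\},
\]
landing in thick subcategories and specialization-closed subsets respectively, with the automatic inclusions $T\subseteq T_{V_T}$ and $V_{T_V}\supseteq V$. The content of the theorem is that both of these are equalities, and that the finiteness refinement holds. Inclusion-preservation and the finitely-generated/closed case are then direct consequences of the main bijection, since a finite union of closed subsets is closed and the thick subcategory generated by a single module $M$ should match $T_{V(M)}$ with $V(M)$ closed.

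The easier direction, realization $V_{T_V}=V$, is handled by exhibiting, for each homogeneous ideal $(g_1,\ldots,g_s)\subset\k[\theta_1,\ldots,\theta_r]$, a Koszul-type object whose support variety is exactly $V(g_1,\ldots,g_s)$. Concretely, one interprets each operator $\theta_i$ as a morphism $\k\to \k[\text{shift}]$ in $D^b(R\mmod)$ and forms iterated mapping cones corresponding to the $g_j$; the resulting object has Ext-module essentially $\k[\theta_1,\ldots,\theta_r]/(g_1,\ldots,g_s)$, and hence the desired support. Taking unions extends realization to arbitrary specialization-closed sets.

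The main obstacle is the detection direction $T_{V_T}=T$: given $M$ with $V(M)\subseteq V_T$, one must assemble $M$ from the objects of $T$ using cones, shifts and summands. The clean route is to view $D^b(R\mmod)$ (or, equivalently, the associated singularity/stable category) as acted on by the graded-commutative ring $\k[\theta_1,\ldots,\theta_r]$ via the cohomology operators, and then invoke a Benson--Iyengar--Krause style local-to-global classification for such actions. Locally at each homogeneous prime $\p$ the problem reduces to a hypersurface situation (cut down by $r-1$ generic linear combinations of the $\theta_i$), where Theorem~\ref{th_lochyper} supplies the classification; globally one patches these local equivalences using the $\k[\theta_1,\ldots,\theta_r]$-action and the fact that the residue field $\k$ generates the appropriate local category. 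The technical heart of Carlson--Iyengar is precisely this patching, which is where the complete intersection hypothesis is essentially used and which I would expect to be the most difficult step to execute in detail.
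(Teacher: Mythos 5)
The paper gives no proof of this statement at all --- it is quoted directly from Carlson--Iyengar \cite{CI} (Th.~5.6 and Remark~5.12) --- and your sketch follows exactly the route used in that reference: support varieties defined via the Eisenbud--Gulliksen cohomology operators making $\Ext^*_R(M,\k)$ a finitely generated $\k[\theta_1,\ldots,\theta_r]$-module, realization by Koszul-type cones on the operators, and detection via a Benson--Iyengar--Krause style stratification argument. So your proposal is the standard and correct approach, essentially identical to the cited source, with the understanding that the detection/stratification step you flag as the technical heart is indeed where the real work (done in \cite{CI}) lies rather than something you have reproved.
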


In the cited results the key instrument the establish the bijections is the notion of \emph{support} of an object in $D^b(R\mmod)$. This support is a closed subset in some reasonable Noetherian topological space (like $\mathrm{Sing}\, X$ in Theorem~\ref{th_lochyper} or $\Spec^*\k[\theta_1,\ldots,\theta_r]$ in Theorem~\ref{theorem_CI}). Consequently, the lattice of
thick triangulated subcategories in $D^b(R\mmod)$ demonstrates tame behavior, as well as
the lattice of specialization-closed subsets in the corresponding topological space does. For example, we have a direct corollary of Theorem~\ref{theorem_CI}.
\begin{prop}[See Proposition~\ref{prop_lattice1}]
Let $R$ be a complete intersection ring of dimension zero as in Theorem \ref{theorem_CI}. Then the lattice~$\LL(R)$ of nonzero finitely generated thick triangulated subcategories in $D^b(R\mmod)$ has the following properties:
\begin{enumerate}
\item $\LL(R)$ satisfies the descending chain condition;
\item there exists the least element $\Perf R$ in $\LL(R)$, let us call minimal elements in $\LL(R)\setminus \{\Perf R\}$ \emph{almost minimal};
\item let $T_1,T_2\in \LL(R)$ be almost minimal. Then there exists finitely many (in fact, only four) elements in $\LL(R)$ bounded above by $\langle T_1, T_2\rangle$ (that is, only four subcategories in $\langle T_1,T_2\rangle$): $\Perf R, T_1,T_2,\langle T_1,T_2\rangle$.
\end{enumerate}
\end{prop}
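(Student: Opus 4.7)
The plan is to translate all three statements via the order-preserving bijection of Theorem~\ref{theorem_CI}, under which $\LL(R)$ corresponds to the poset of nonempty closed subsets of the topological space $S := \Spec^* \k[\theta_1,\ldots,\theta_r]$, with joins going to unions. Since $\k[\theta_1,\ldots,\theta_r]$ is Noetherian, $S$ is a Noetherian topological space.

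Part (1) will then follow immediately from the DCC on closed subsets of a Noetherian space. For part (2), I would set $\m := (\theta_1,\ldots,\theta_r)$, the unique maximal homogeneous ideal, and observe that every nonempty closed subset of $S$ is a finite union $\bigcup_i V(\p_i)$ with each $V(\p_i)$ containing $\m$, so $\{\m\}$ is the minimum nonempty closed subset of $S$. I would then identify $\Perf R \leftrightarrow \{\m\}$ from the construction of the bijection in Theorem~\ref{theorem_CI}: perfect complexes have cohomology operators $\theta_i$ acting nilpotently, so their cohomological support is concentrated at $\m$. This pins down $\Perf R$ as the minimum of $\LL(R)$.

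For part (3), I would first characterize the closed subsets $Z$ corresponding to almost minimal $T$. For any $\p \in Z$ with $\p \ne \m$, the closed set $V(\p) \subseteq Z$ is not $\{\m\}$, so $V(\p) = Z$; and if $V(\p)$ contained a further prime $\p' \ne \p,\m$, then $V(\p') \subsetneq Z$ would properly contain $\{\m\}$, contradicting almost minimality. Thus $Z = \{\p,\m\}$ with no homogeneous prime strictly between $\p$ and $\m$. For distinct almost minimal $T_1,T_2$ corresponding to $\{\p_1,\m\}$ and $\{\p_2,\m\}$, the join $\langle T_1,T_2\rangle$ corresponds to $\{\p_1,\p_2,\m\}$; a direct enumeration, using that each $\{\p_i\}$ is not closed (its closure is $\{\p_i,\m\}$), shows the only nonempty closed subsets of $\{\p_1,\p_2,\m\}$ are $\{\m\}$, $\{\p_1,\m\}$, $\{\p_2,\m\}$, and $\{\p_1,\p_2,\m\}$, corresponding to $\Perf R, T_1, T_2, \langle T_1,T_2\rangle$ respectively.

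The one nontrivial step will be the identification $\Perf R \leftrightarrow \{\m\}$ in part (2), which requires unpacking the support construction from Theorem~\ref{theorem_CI} rather than using it as a black box; everything else is routine point-set topology on the Noetherian space $S$.
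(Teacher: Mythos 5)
Your proposal is correct and, for the most part, is exactly the paper's argument: translate everything through Theorem~\ref{theorem_CI} to nonempty closed subsets of $\Spec^*\k[\theta_1,\ldots,\theta_r]$ (which is $\P^{r-1}_\k$ with the extra point $\m$ attached), get (1) from Noetherianity, characterize almost minimal elements as $\{\m,P\}$ with $P$ a closed point of $\P^{r-1}_\k$, and enumerate the four closed subsets of $\{\m,P_1,P_2\}$ for (3). The one place where you genuinely diverge is the step you yourself flag in (2): identifying $\Perf R$ with $\{\m\}$. The paper never unpacks the support construction; it notes instead, via Theorem~\ref{theorem_Hopkins}, that since $\Spec R$ is a one-point space, $\Perf R$ has no nontrivial thick subcategories, so $\Perf R$ is a \emph{minimal} nonzero thick subcategory of $D^b(R\mmod)$, and since the lattice has a least element (corresponding to $\{\m\}$), minimality forces $\Perf R$ to equal it. Your route is also viable but costs more: it imports the explicit form of the bijection in Theorem~\ref{theorem_CI} (that it is induced by cohomological supports over $\k[\theta_1,\ldots,\theta_r]$ and that $Z\mapsto\{M \mid \Supp M\subseteq Z\}$), which the quoted statement does not supply; and, as written, the nilpotence of the operators $\theta_i$ on perfect complexes only yields $\Perf R\subseteq T_{\{\m\}}$ — to conclude equality you should either invoke the converse (trivial support variety implies finite projective dimension over a complete intersection) or simply observe that $T_{\{\m\}}\subseteq\Perf R$ is automatic because $T_{\{\m\}}$ is the least element of $\LL(R)$ and $\Perf R\in\LL(R)$. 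The paper's use of the Hopkins--Neeman theorem buys a black-box argument; your version buys nothing extra here but is salvageable with that small addition.
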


It is believed that in general the structure of the lattice of thick triangulated subcategories in $D^b(\coh X)$ is wild {\cite{Ne}}.

In this paper we deal with a simpliest zero-dimensional scheme which is not a complete intersection. For a fixed field $\k$ we consider algebras
$$R_N:=\k[y_1,\ldots,y_N]/(y_1,\ldots,y_N)^2$$
and schemes $X_N=\Spec R_N$ for $N\geq 2$. We establish some properties of thick triangulated subcategories in $D^b(\coh X_N)$ supporting the belief that the classification of such subcategories is a wild problem.

Our main results in this direction are the following.

We show that the lattice of thick triangulated subcategories in $D^b(R\mmod)$ has quite different behavior for $R=R_N$ than for complete intersections from Theorem~\ref{theorem_CI}. We have

\begin{theoremintro}[See Proposition~\ref{prop_lattice2}]
\label{theorem_lattice2intro}
Let  $R_N=\k[y_1,\ldots,y_N]/(y_1,\ldots,y_N)^2$, $N\ge 2$.
The lattice $\LL(R_N)$ of nonzero finitely generated thick triangulated subcategories in $D^b(R_N\mmod)$ has the following properties:
\begin{enumerate}
\item $\LL(R_N)$ does not satisfy the descending chain condition.
\item There exists no least element in $\LL(R_N)$. Category $\Perf R_N$ is minimal and almost maximal in $\LL(R_N)$: there are no elements between $\Perf R_N$ and $D^b(R_N\mmod)$.
\item There exist minimal elements $T_1,T_2\in \LL(R_N)$ such that there are infinitely many elements bounded above by $\langle T_1,T_2\rangle$ .
\end{enumerate}
\end{theoremintro}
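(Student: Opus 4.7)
The plan is to separate the structural claim about $\Perf R_N$ in item (2), which is elementary, from the ``wildness'' claims in items (1), (2), and (3), which require exploiting the Koszul duality between $R_N$ and the free associative algebra $A_N := \k\langle x_1,\ldots,x_N\rangle = \Ext^*_{R_N}(\k,\k)$.

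I would first establish the almost maximality of $\Perf R_N$. Let $M \in D^b(R_N\mmod)\setminus\Perf R_N$; after truncation it suffices to handle a single module $M$ of infinite projective dimension. Its first syzygy $\Omega M$ sits inside $\m R_N^k$, which is annihilated by $\m$ (since $\m^2=0$) and hence is semisimple, giving $\Omega M \cong \k^s$ with $s\ge 1$. The defining triangle $\Omega M \to R_N^k \to M \to$ forces $\k \in \langle M, \Perf R_N\rangle$. Together with the triangle $\m \to R_N \to \k$ (with $\m\cong \k^N$) and the fact that every $R_N$-module has Loewy length $\le 2$, this yields $\langle \k\rangle = D^b(R_N\mmod)$, and hence $\langle M, \Perf R_N\rangle = D^b(R_N\mmod)$. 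The minimality of $\Perf R_N$ within $\LL(R_N)$ is immediate from Theorem~\ref{theorem_Hopkins} applied to the one-point scheme $X_N$.

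For the remaining assertions the plan is to transport wildness from the poset of universal localizations of $A_N$ to the lattice $\LL(R_N)$ via a correspondence, to be set up elsewhere in the paper, that attaches to each universal localization $A_N \to B$ a thick triangulated subcategory of $D^b(R_N\mmod)$. Concrete candidates generating non-trivial minimal subcategories are the cyclic modules $M_L := R_N/L$ for a line $L \subset \m$, parametrized by $\P(\m) \cong \P^{N-1}$; they fit into $0 \to L \to R_N \to M_L \to 0$ with $L \cong \k$. I would verify that distinct lines yield distinct subcategories $\langle M_L\rangle$, none of which contains $R_N$. A family of such incomparable minimal elements already rules out a least element in $\LL(R_N)$ (the rest of (2)); iterating syzygy-type constructions within these families manufactures a strictly descending infinite chain, settling (1); and for two incomparable lines $L, L'$, the two-line join $\langle M_L, M_{L'}\rangle$ corresponds under the bridge to the universal localization of $A_N$ inverting the two Koszul generators dual to $L$ and $L'$, whose intermediate universal localizations inside the free algebra on two generators already form an infinite poset, producing (3).

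The hard part will be setting up the bridge to universal localizations precisely and showing that the candidate subcategories $\langle M_L\rangle$ and $\langle M_L, M_{L'}\rangle$ remain genuinely distinct after thick closure (rather than collapsing). Once that bridge is in place, items (1)--(3) are direct reflections of the known wild lattice of universal localizations of a free associative algebra on $\ge 2$ generators --- the feature sharply absent in the complete-intersection setting of Theorem~\ref{theorem_CI}, where the commutative polynomial Koszul dual reduces the classification to specialization-closed subsets of $\Spec^*\k[\theta_1,\ldots,\theta_r]$.
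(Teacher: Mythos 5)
Your treatment of the ``tame'' half of item (2) is fine: the observation that over $R_N$ every syzygy lands in $\m\cdot R_N^k$ and is therefore semisimple (as $\m^2=0$), so that any non-perfect object forces $\k\in\langle M,\Perf R_N\rangle=D^b(R_N\mmod)$, is a correct and pleasantly direct argument on the $R_N$-side; it plays the role that Lemma~\ref{lemma_dissection} and Proposition~\ref{prop_perfbadA} play on the Koszul-dual side in the paper, and minimality of $\Perf R_N$ via Theorem~\ref{theorem_Hopkins} is exactly as in the paper.

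For everything else, however, there is a genuine gap: the parts of the statement that carry the actual content --- strict descent in (1), the existence of two \emph{distinct minimal} elements for (2), and infinitely many distinct subcategories below $\langle T_1,T_2\rangle$ in (3) --- are nowhere proved, only delegated. You defer them to a ``bridge'' to universal localizations (the paper's Proposition~\ref{equality of localizations} and Corollary~\ref{cor_G1G2}) together with the claim that the lattice of universal localizations of $\k\langle x,y\rangle$ is \emph{known} to be wild. That input is not available: deciding which elements become invertible in $A[\SS^{-1}]$, hence which of the subcategories $\langle M_a,M_b\rangle$ are distinct or nested, is precisely what the paper has to prove, and it does so \emph{categorically} --- via good sets, $\XX$-filtrations, Lemma~\ref{lemma_Gdiag}, Theorem~\ref{theorem_xprod}, Proposition~\ref{prop_Mxminimal}, and the explicit combinatorial Lemma~\ref{lemma_1423} producing the chain of good pairs $a_{k+1}=a_kb_ka_k^8b_k^8$, $b_{k+1}=a_k^3b_k^3a_k^6b_k^6$ of Example~\ref{example_chain} --- and only then deduces the localization statement (Theorem~\ref{theorem_invert}), explicitly remarking that no direct algebraic proof is known. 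So your logic runs in the direction that is not supported; ``iterating syzygy-type constructions'' gives no mechanism for proving that the resulting chain does not stabilize, nor that intermediate subcategories in $\langle T_1,T_2\rangle$ are pairwise distinct. A secondary problem is your choice of generators: the objects Koszul-dual to $M_x$ with $x$ linear are the two-dimensional modules $R_N/H$ for hyperplanes $H\subset\m$, not your $M_L=R_N/L$ for lines $L$ (these agree only when $N=2$); for $N>2$ the dual of $R_N/L$ is a cone of a map of free $A$-modules of unequal ranks, and the minimality of $\langle R_N/L\rangle$, which you only promise to ``verify'', is neither obvious nor covered by Proposition~\ref{prop_Mxminimal}. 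Without the filtration/goodness machinery (or a genuinely independent proof of the corresponding localization facts), items (1), (3) and the no-least-element part of (2) remain unproved.
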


A following strengthening of property (1) from Theorem~\ref{theorem_lattice2intro} is possible.
\begin{theoremintro}[See Theorem \ref{theorem_tree}]
Let  $R_N=\k[y_1,\ldots,y_N]/(y_1,\ldots,y_N)^2$, $N\ge 2$.
Then there exists an infinite descending binary tree of embedded
finitely generated thick triangulated subcategories in $D^b(R_N\mmod)$.
\end{theoremintro}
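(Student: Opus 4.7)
The plan is to build the tree recursively: at each node $\sigma\in\{0,1\}^{<\omega}$ I assign a finitely generated thick triangulated subcategory $T_\sigma\subset D^b(R_N\mmod)$ with $T_{\sigma 0}, T_{\sigma 1}\subsetneq T_\sigma$ strict inclusions, so that infinitely many proper inclusions emanate downward from each node. My approach is to go through the correspondence (developed elsewhere in this paper, and advertised in the abstract) between finitely generated thick triangulated subcategories in $D^b(R_N\mmod)$ and universal localizations of the Koszul dual algebra $E_N:=\Ext^*_{R_N}(\k,\k)\cong\k\langle\theta_1,\ldots,\theta_N\rangle$, which is the free graded associative algebra in $N$ variables (since $R_N$ is Koszul with trivial quadratic relations).

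Having the correspondence in hand, I would exhibit an infinite binary tree of pairwise non-equivalent universal localizations of $E_N$. Concretely, I would pick, for each finite binary string $\sigma$, a square matrix $\Phi_\sigma$ over $E_N$ and declare the localization associated to $\sigma$ to be the universal inversion of all $\Phi_\tau$ with $\tau$ an ancestor of $\sigma$; descending from $\sigma$ to $\sigma 0$ or $\sigma 1$ corresponds to inverting an additional matrix $\Phi_{\sigma 0}$ or $\Phi_{\sigma 1}$. For $N\geq 2$ the free algebra $E_N$ carries a genuinely large supply of pairwise non-conjugate square matrices (encoding, for instance, representations of the free monoid on $N$ letters), so at every node there is enough room to make two independent choices. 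Translating back under the dictionary, each such localization yields a finitely generated thick triangulated subcategory, and inverting more matrices passes to a strictly smaller subcategory, thereby producing the descending tree.

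The hard part is verifying the \emph{strictness} of each inclusion $T_{\sigma i}\subsetneq T_\sigma$. A priori, inverting an additional matrix could leave the localization (and hence the subcategory) unchanged, collapsing the tree after finitely many steps. I would overcome this by producing, at each node $\sigma$ and each $i\in\{0,1\}$, an explicit $E_N$-module -- equivalently, an object of $D^b(R_N\mmod)$ under the correspondence -- that lies in $T_\sigma$ but is detected to be absent from $T_{\sigma i}$ by an $\Ext$ or support invariant. This is the step where the non-commutativity of $E_N$ is decisive: in contrast to the commutative Noetherian picture governing Theorem~\ref{theorem_CI}, the lattice of universal localizations of the free algebra in $N\geq 2$ variables supports arbitrarily deep and bifurcating descending chains, and it is precisely this phenomenon that the binary tree makes manifest on the side of $D^b(R_N\mmod)$.
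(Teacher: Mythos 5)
There is a genuine gap, and in fact the very first step of your reduction runs in the wrong direction. Under the dictionary you invoke (Proposition~\ref{equality of localizations}, Corollaries~\ref{cor_tphi} and~\ref{cor_G1G2}), the thick subcategory attached to a family $\SS$ of matrices over the free algebra is $\langle M_g\rangle_{g\in\SS}=\ker\lambda_\SS^*$, the kernel of extension of scalars to $A[\SS^{-1}]$. Inverting an \emph{additional} matrix enlarges this kernel, so your recursion (each node inverts all matrices attached to its ancestors) produces an \emph{ascending} family of subcategories, not the descending tree claimed; the sentence ``inverting more matrices passes to a strictly smaller subcategory'' is false for this correspondence. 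To get a descending tree one must instead arrange the generators of a child to be \emph{generated by} those of the parent, e.g.\ by taking the child's generators to be suitable products of the parent's elements, which is exactly what the paper does: starting from $(x_1,x_2)$ it passes from a pair $(a,b)$ to the pairs $(aba^8b^8,\,a^2b^2a^7b^7)$ and $(a^3b^3a^6b^6,\,a^4b^4a^5b^5)$ and sets $T_{(a,b)}=\langle M_a,M_b\rangle$ (Theorem~\ref{theorem_tree}, Lemma~\ref{lemma_zero}).

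The second, and decisive, gap is that the strictness of the inclusions and the incomparability (indeed zero intersection, as required by Definition~\ref{def_tree}) of different branches is precisely the hard content, and you leave it entirely unexecuted, merely promising an ``Ext or support invariant.'' The support invariant of the paper's final section cannot do this job: for monomials in the same two letters the abelianized zero loci coincide (all four words above have support $Z(a^{ab})\cup Z(b^{ab})$), so $\Supp$ does not separate the two branches. What actually works is the goodness/$\XX$-filtration machinery: the combinatorial Lemma~\ref{lemma_1423} guarantees that the four displayed words form a good set of equal lengths, Theorem~\ref{theorem_xprod} (via Lemma~\ref{lemma_Gdiag}) then pins down exactly which $M_x$ lie in $\langle M_a,M_b\rangle$ (giving strict descent), and Lemma~\ref{lemma_XY} (Hom-vanishing between modules filtered by disjoint good sets) gives the zero intersection of branches. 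Note also that your hope to certify strictness by showing two universal localizations of $E_N$ are genuinely different reverses the paper's logical flow: the localization statements (Theorem~\ref{theorem_invert}) are \emph{deduced from} the categorical generation criteria, and the authors explicitly say they do not know a direct algebraic proof; so appealing to a ``large supply of non-conjugate matrices'' gives no independent handle on distinguishing the localizations or the corresponding subcategories. Without an explicit choice of elements and a proof of a generation criterion of the type of Theorem~\ref{theorem_xprod}, the proposal does not establish the theorem.
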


\medskip
As a main tool, we use the equivalence of triangulated categories
$$D^b(R_N\mmod)\to \Perf A_N,$$
where $A_N$ denotes the free graded algebra in (non-commuting) variables $x_1,\ldots,x_N$ of degree $1$.
Here we consider $A_N$ as a dg $\k$-algebra with zero differential and $\Perf A_N$ is the triangulated category of perfect dg $A_N$-modules. This algebra $A_N$ is hereditary.

To construct examples of subcategories in $D^b(R_N\mmod)$, we use  graded $A_N$-modules of a special kind. For any homomorphism $g$ of free finitely generated graded $A_N$-modules we denote by $M_g$ the cone of $g$, it is a dg module over $A_N$.
In particular, for a homogeneous  element  $x\in A_N$ we denote by $M_x$ the cone
$Cone(A_N[-\deg x]\xra{x} A_N)$.  Moreover, any object in $\Perf A_N$ is isomorphic to  $M_g$ for some homomorphism~$g$ as above.  For a family of homogeneous elements $\XX$ in $A_N$ we introduce the notion of $\XX$-filtration   on a graded  $A_N$-module. Using this notion we determine when a family of dg $A_N$-modules $(M_g, g\in \XX)$ generates (in $\Perf A_N$) a given dg $A_N$-module $M_x$ for a homogeneous $x\in A_N$.
Under an important  technical condition on $\XX$ called \emph{goodness} we prove that $M_x\in \langle M_g\rangle_{g\in \XX}$ if and only if the graded $A_N$-module $A_N/xA_N$ has an $\XX$-filtration (Lemma~\ref{lemma_Gdiag}) and if and only if~$x$ is a product of a scalar and several elements from $\XX$ (see Theorems~\ref{theorem_xprod} and~\ref{theorem_xprod2}).

\medskip

We provide another criterion of generation for modules of the form $M_g$, in terms of universal localizations of a free graded algebra in sense of P.\,Cohn.

\begin{theoremintro}[See Corollary {\ref{cor_G1G2}}]
\label{theoremintro_cor}
Let $x$ be a homomorphism of finitely generated free graded  $A_N$-modules and let $\SS$ be a family of such homomorphisms. Then $M_x\in \langle M_g\rangle_{g\in \SS}$ if and only if the homomorphism $x$ is invertible over $A[\SS^{-1}]$.
\end{theoremintro}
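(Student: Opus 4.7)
The plan is to use the well-known link between universal localization of a ring and Verdier quotients of its perfect derived category, going back to Neeman--Ranicki. Since $A=A_N$ is (graded) hereditary, this link becomes especially clean.

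I would set $B := A[\SS^{-1}]$ and consider the derived base change $-\otimes_A^L B\colon \Perf A \to \Perf B$. For each $g\in \SS$, the morphism $g\otimes B$ is an isomorphism by the defining property of the universal localization, so $M_g\otimes_A^L B \simeq \mathrm{Cone}(g\otimes B)=0$. Hence the base change functor annihilates every object of $\langle M_g\rangle_{g\in\SS}$ and descends to a triangulated functor
$$F\colon \Perf A / \langle M_g\rangle_{g\in\SS} \longrightarrow \Perf B.$$
The key input I would invoke is that, for a (graded) hereditary ring $A$, the functor $F$ is an equivalence; equivalently, the kernel of $-\otimes_A^L B$ on $\Perf A$ coincides with $\langle M_g\rangle_{g\in\SS}$. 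This is a graded/dg version of the Neeman--Ranicki theorem, whose essential ingredient is \emph{stable flatness} of $A\to B$ (vanishing of higher $\Tor^A(B,B)$), automatic in the hereditary case.

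Granting this, the corollary is immediate: since every object in $\Perf A$ is a cone $M_h$ for some $h$ between finitely generated free graded modules,
$$M_x \in \langle M_g\rangle_{g\in\SS} \iff F(M_x)\simeq 0 \iff \mathrm{Cone}(x\otimes B)\simeq 0 \iff x\otimes B \text{ is invertible,}$$
and the final condition says precisely that $x$ is invertible over $A[\SS^{-1}]$. The one nontrivial step is the identification of the kernel of base change with $\langle M_g\rangle_{g\in\SS}$; in the hereditary context every perfect object is a $2$-term complex, so the reduction is transparent, and I expect the heavy lifting to be carried out in the earlier part of the paper via the $\XX$-filtration machinery and the goodness hypothesis invoked for Lemma~\ref{lemma_Gdiag} and Theorems~\ref{theorem_xprod}--\ref{theorem_xprod2}.
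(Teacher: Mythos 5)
Your proposal follows essentially the same route as the paper: Corollary~\ref{cor_G1G2} is deduced in precisely this way from Proposition~\ref{equality of localizations}, which identifies $(\Perf A)/\langle M_g\rangle_{g\in\SS}$ with $\Perf A[\SS^{-1}]$ via scalar extension (up to idempotent completion, which suffices because only the equality $T=\ker\lambda_\SS^*$ is needed), proved following Neeman--Ranicki. Two minor corrections: the induced functor on perfect categories is an equivalence only after idempotent completion, and the heavy lifting is not the $\XX$-filtration/goodness machinery (that underlies the independent criterion of Theorems~\ref{theorem_xprod} and~\ref{theorem_xprod2}) but the Neeman--Ranicki-style argument of Section~\ref{section_localization}, which replaces your appeal to stable flatness by a direct verification that $\End^{\bul}(q(A))\cong A[\SS^{-1}]$, using that dg $A$-modules are formal since $A$ is hereditary.
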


This criterion is based on a relation between thick triangulated subcategories in $\Perf A_N$ and universal localizations of the free graded algebra $A_N$. 
For any family $\SS$ of homomorphisms of finitely generated free graded $A_N$-modules there is a well-defined homomorphism of graded algebras $\lambda_\SS\colon A_N\to  A_N[\SS^{-1}]$, called universal localization. In 
Proposition~\ref{equality of localizations} we explain that, roughly speaking, the scalar extension functor $\Perf A_N\to \Perf A_N[\SS^{-1}]$ is isomorphic to 
the Verdier localization functor 
$$\Perf A_N\to (\Perf A_N)/\langle M_g\rangle_{g\in \SS}.$$
Such results are not new, see for example, \cite{NR}, \cite{CY} or \cite{KY}. After our preprint was published we have learned that a statement equivalent to ours is contained in \cite[Prop. 3.5]{CY}. Nevertheless we decided to leave our proof (following ideas of \cite{NR}) in the paper, as it is quite different from the one given in \cite{CY}.

In view of Theorem~\ref{theoremintro_cor}, questions about subcategories in $D^b(R_N\mmod)$ or $\Perf A_N$ are equivalent to questions about universal localizations of a free algebra.
Once we can solve some questions on the categorical side, we get some consequences for localizations of algebras. We do not know if these results can be obtained directly.

\begin{theoremintro}[See Theorem {\ref{theorem_invert}} and Corollary {\ref{cor_locinjective}}]
Let $A$ be a free graded algebra. Let $\XX\subset A$ be a family of nonzero homogeneous elements. Then there exists a family  $\~\XX\subset A$ of nonzero homogeneous elements (see Proposition~\ref{prop_makegood} for its construction) such that  a homogeneous element $x\in A$ is invertible in
$A[\XX^{-1}]$ if and only if
$$x=\lambda\cdot \prod_{i=1}^n y_i\quad\text{for some}\quad \lambda\in\k^*, \quad y_i\in\~\XX \quad \text{and}\quad n\ge 0.$$

Also,  the localization $A[\XX^{-1}]$ is not a zero ring and  the canonical map
$A\to A[\XX^{-1}]$
is injective.
\end{theoremintro}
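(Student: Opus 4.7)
The plan is to derive the characterization of invertibles from the categorical machinery built earlier in the paper, and then to prove injectivity (from which nonvanishing of $A[\XX^{-1}]$ follows at once) via a separate argument in $\Perf A$.

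For the characterization, I invoke Proposition~\ref{prop_makegood} to produce a good family $\~\XX$ satisfying $\langle M_g\rangle_{g\in\~\XX} = \langle M_g\rangle_{g\in\XX}$ inside $\Perf A$. Corollary~\ref{cor_G1G2} translates invertibility of a homogeneous $x\in A$ in $A[\XX^{-1}]$ into the condition $M_x\in \langle M_g\rangle_{g\in\XX}$, and since $\~\XX$ is good, Theorems~\ref{theorem_xprod} and~\ref{theorem_xprod2} describe the $x$ satisfying $M_x\in\langle M_g\rangle_{g\in\~\XX}$ as precisely the scalar multiples of products of elements of $\~\XX$. Chaining these two equivalences yields the claimed formula.

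For injectivity of $\lambda_\XX\colon A\to A[\XX^{-1}]$, I use Proposition~\ref{equality of localizations} to identify the scalar-extension functor $\Perf A\to \Perf A[\XX^{-1}]$ with the Verdier quotient $\Perf A\to \Perf A/\langle M_g\rangle_{g\in\XX}$. Suppose $0\neq a\in A$ satisfies $\lambda_\XX(a) = 0$; passing to homogeneous components we may assume $a$ is homogeneous. Then the multiplication-by-$a$ morphism $a\colon A[-\deg a]\to A$ in $\Perf A$ becomes zero in the Verdier quotient, hence factors as $A[-\deg a]\xra{u} Z\xra{v} A$ in $\Perf A$ with $vu = a$ for some $Z\in\langle M_g\rangle_{g\in\XX}$.

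The crux is to show $vu = 0$ in $\Perf A$, contradicting $a\neq 0$. Since $A$ is a graded domain and each $g\in\XX$ is nonzero, $M_g$ is quasi-isomorphic to the torsion cyclic module $A/gA$. A devissage in the standard t-structure on $\Perf A$---using that torsion $A$-modules are closed under extensions (again because $A$ is a domain), direct summands, and internal shifts---shows that every $Z\in\langle M_g\rangle_{g\in\XX}$ has all its cohomology modules $H^i(Z)$ torsion. Therefore $H^0(v)\colon H^0(Z)\to H^0(A) = A$ vanishes, being a map from a torsion to a torsion-free $A$-module; and since $A[-\deg a]$ and $A$ both lie in the heart, the morphism $vu$ is determined by $H^0(vu) = H^0(v)\circ H^0(u) = 0$, so $a = 0$, a contradiction. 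This proves injectivity, and $A[\XX^{-1}]\neq 0$ then follows from $A\neq 0$. The main obstacle is this devissage step: carefully checking that the property of having torsion cohomology is preserved under the triangulated operations (cones, shifts, and direct summands) defining $\langle M_g\rangle_{g\in\XX}$ as a thick subcategory of $\Perf A$.
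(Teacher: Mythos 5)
Your first half (the characterization of invertible homogeneous elements) is exactly the paper's argument: Proposition~\ref{prop_makegood} plus Corollary~\ref{cor_G1G2} plus Theorem~\ref{theorem_xprod2}, so nothing to add there. The second half, however, contains a genuine gap at precisely the step you flag as the crux. Your devissage rests on the claim that every object of $\langle M_g\rangle_{g\in\XX}$ has ``torsion'' cohomology, with the base case being the generators $M_g\simeq A/gA$. But over a free algebra $A/gA$ is \emph{not} a torsion module in any elementwise sense: for $A=\k\{x,y\}$ and $g=x$, the class of $y$ in $A/xA$ has zero annihilator (a relation $yc\in xA$ forces $c=0$ by comparing first letters of monomials), so it generates a free submodule; more drastically, by Lemma~\ref{lemma_dissection} every finitely presented graded $A$-module is free in all sufficiently high degrees. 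Free algebras are not Ore domains, torsion elements do not form submodules, and the class of modules with $\Hom_{\grmod A}(-,A)=0$ (which is what you actually need, and which does hold for $A/gA$ by the domain property) is \emph{not} closed under the submodules/kernels that appear as cohomology of cones, so a naive closure-under-triangulated-operations argument cannot establish it. There is also no ``standard t-structure'' here with separate cohomology modules $H^i(Z)$: since the dg algebra $A$ has cohomology in all non-negative degrees, $H(Z)$ is a single graded $A$-module. That part is repairable --- because the source of your factorization is free, $\Hom_{D(A)}(A[-\deg a],-)\cong H^{\deg a}(-)$, so $vu$ is indeed computed by $H(v)\circ H(u)$ --- but the vanishing of $H(v)$ is exactly the hard point.

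What makes the vanishing true is the paper's good-set machinery: after replacing $\XX$ by the good set $\~\XX$ of Proposition~\ref{prop_makegood}, Lemma~\ref{lemma_Gdiag} shows that $H(Z)$ is $\~\XX$-filtered for every $Z\in\langle M_g\rangle_{g\in\XX}$, and then $\Hom_{\grmod A}(H(Z),A)=0$ follows by induction along the filtration since $\Hom_{\grmod A}(A/xA[d],A)=0$. With that input your factorization argument does complete, but it is not a shortcut: it uses the same heavy ingredient as the paper. Note also that the paper's own proof of injectivity avoids analyzing $\langle M_g\rangle_{g\in\XX}$ for arbitrary $\XX$ altogether: it reduces to the maximal case $\XX=A^*$ (any localization at $\XX$ receives a factorization of $A\to A[(A^*)^{-1}]$), shows $A[(A^*)^{-1}]\neq 0$ via Corollary~\ref{cor_Mgall} and Corollary~\ref{cor_Mgproper}, and then gets injectivity for free because every nonzero homogeneous element becomes invertible, hence nonzero, in a nonzero ring.
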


The authors are grateful to Amnon Neeman for valuable discussions and to Xiao-Wu Chen for making us aware of his paper \cite{CY}.

\subsection{Some definitions, conventions and notation}

In this paper we work over a fixed field $\k$. All rings are supposed to be  associative and unital, by algebras we always mean associative  unital $\k$-algebras. All graded algebras and rings are $\Z$-graded. A {\it free graded algebra} always means a graded algebra which is freely generated by finitely many non-commuting elements of degree~$1$.
All modules are right modules.

Let $\Lambda $ be a graded algebra.
Denote by $\Mat*(\Lambda )$  the set of degree preserving homomorphisms between finitely generated free graded $\Lambda $-modules.
For (arbitrary) graded $\Lambda $-modules $M,N$ we denote by $\Hom_{\grmod \Lambda}(M,N)$ the set of degree preserving homomorphisms $M\to N$ of $\Lambda$-modules. We denote 
$$\Hom^{\bul}_{\grmod \Lambda}(M,N):=\oplus_{i\in\Z}\Hom_{\grmod \Lambda}(M,N[i]),$$
it is a graded vector space. 
We let $\Lambda^*$ be the set of nonzero homogeneous elements in~$\Lambda$.

We consider $\Lambda$ as a dg algebra with zero differential, and denote by $D(\Lambda)$ the corresponding derived category of (right) dg $\Lambda$-modules. As usual $\Perf \Lambda \subset D(\Lambda)$ is the full subcategory of perfect dg modules. Recall that $D(\Lambda)$ and $\Perf \Lambda$ are triangulated categories.
For a dg $\Lambda$-module $M$ we denote by $H(M)$ its cohomology which is a graded $\Lambda$-module.

A full strict subcategory $T'\subset T$ in an additive category is \emph{thick} if it is closed under direct summands that exist in $T$: for any objects $F,F'\in T$ such that $F\oplus F'\in T'$ one has $F,F'\in T'$.

For a triangulated category $T$ and a family of objects $\GG\subset T$, define full subcategories $[\GG]_n\subset T$ as follows. Let $[\GG]_0$ consist of finite direct sums of shifts of objects of $\GG$. Let $[\GG]_n$ consist of objects $M\in T$ such that there exists a triangle $M_{n-1}\to M\to M_0\to M_{n-1}[1]$ with $M_0\in [\GG]_0, M_{n-1}\in [\GG]_{n-1}$.
Let $[\GG]:=\cup_i [\GG]_i$, it is the smallest full triangulated subcategory in $T$ containing $\GG$. Denote by
$\langle \GG\rangle\subset T$ the thick closure of $[\GG]$, it is the smallest full thick triangulated subcategory in $T$ containing $\GG$. It is said to be classically generated by $\GG$.

For a triangulated category $T$ and  objects $E,F\in T$ we put
$$\Hom^{\bul}_T(E,F):=\oplus_{i\in\Z}\Hom_T(E,F[i]),$$
it is a graded abelian group. In particular,
$$\End^{\bul}_T(F):=\oplus_{i\in\Z}\Hom_T(F,F[i])$$
 is a graded ring.

For a commutative Noetherian algebra $R$ we consider the bounded derived category $D^b(\modd R)$ of finitely generated $R$-modules, and its full subcategory $\Perf R\subset D^b(\modd R)$ of perfect $R$-complexes.

\section{The schemes we study}
\label{section_schemes}
Let $\k$ be a fixed field. Let $N\ge 2$ and put $$R_N:=\k[y_1,\ldots,y_N]/(y_1,\ldots,y_N)^2,$$
this is a commutative local finite-dimensional algebra. Let
$$X_N:=\Spec R_N.$$
Sometimes we will omit the index $N$ and write just $R$ and $X$.

In this note we study the bounded derived category $D^b(\modd R)=D^b(\coh X)$. In particular, we are interested in thick  triangulated subcategories in $D^b(\modd R)$.

It follows from Theorem \ref{theorem_Hopkins} that $\Perf R$ has no non-trivial thick triangulated subcategories.

Define dualization functor $*\colon \modd R\to (\modd R)^{\op}$ by taking the dual $\k$-vector space.
We have for a coherent sheaf $F$ on $X$
$$F^*=\Hom_{\k}(F,\k)=\Hom_R(F,R^*).$$
This functor is exact, its derived functor is the Grothendieck duality
$$D^b(\modd R)\xra{\sim} D^b(\modd R)^{\op}$$
coming from the dualizing sheaf $R^*$.

Let $\k$ denote the simple $R$-module. Then $\k$ is a classical generator of $D^b(\modd R)$, one has $\langle \k\rangle_1=D^b(\modd R)$ (and moreover $[\k]_1=D^b(\modd R)$).
By Koszul duality, the graded algebra 
$\End_{D^b(\modd R_N)}^{\bul}(\k)=\oplus_{i\ge 0}\Ext^i_{R_N}(\k,\k)$ is isomorphic to the free associative algebra $\k\{x_1,\ldots,x_N\}$ in $N$ variables of degree one.
To be more accurate, generators $x_i$ of this algebra are dual to the original $y_i$-s. 
Denote this free associative graded algebra by $A_N$ (or just by $A$).

Choose any dg enhancement of $D^b(\modd R)$ (which is unique by Theorem 8.13 in \cite{LO}). Then the dg algebra $R\End_R(\k)$ is quasi-isomorphic to its graded cohomology algebra $A$, because the latter is free. It follows that one has an equivalence of triangulated categories
$$K\colon D^b(\modd R)\xra{\sim} \Perf A,$$
where $A$ is treated as a dg algebra with zero differential.
It would be more convenient for us to use another equivalence

We will consider the equivalence
$$K'\colon D^b(\modd R)\xra{\sim} (\Perf A)^{\op},$$
which is the composition of the dualization on $X$ and $K$.
One has
$$K'(\k)\cong A,\quad K' (R)\cong \Hom(\k,R^*)\cong \k.$$
It follows that $K'$ restricts to an equivalence
$$\Perf R\xra{\sim} D_{fd}(A)^{\op},$$
where $D_{fd}(A)\subset D(A)$ denotes the full subcategory formed by dg modules with finite-dimensional (over $\k$) cohomology.

\section{Modules over free algebras}
Let $A=A_N=\k\{x_1,\ldots,x_N\}$ be the free graded algebra in $N$ variables, $\deg x_i=1$.
The algebra $A$  has global dimension $1$. Moreover $A$ is a free ideal ring: any right or left ideal in $A$ is free as an $A$-module.
In particular, $A$ has no zero divisors.
The algebra~$A$ is not Noetherian for $N\ge 2$, but is right graded coherent: any finitely generated homogeneous right ideal in $A$ is finitely presented. Consequently, the category of finitely presented graded $A$-modules is abelian.

We have a standard
\begin{lemma}[See {\cite[Prop. 1.2.1]{Co}}]
\label{lemma_fir}
Let $A$ be a free graded  algebra, let $M\subset N$ be graded $A$-modules. If $N$ is free then $M$ is also free.
\end{lemma}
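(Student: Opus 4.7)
The plan is to follow the classical proof for submodules of free modules over free ideal rings (Cohn's Proposition 1.2.1), carefully adapting it to the graded setting. First, I would fix a homogeneous basis $\{e_i\}_{i\in I}$ of $N$ and well-order the index set $I$. Setting $N_{<i}=\bigoplus_{j<i}A\,e_j$, $N_{\le i}=N_{<i}\oplus A\,e_i$, $M_{<i}=M\cap N_{<i}$ and $M_{\le i}=M\cap N_{\le i}$, I would consider the degree-preserving projection $\pi_i\colon N_{\le i}\to A[-\deg e_i]$ onto the $e_i$-coordinate. Since $M$ is a graded submodule and $\pi_i$ is degree-preserving, the image $J_i:=\pi_i(M_{\le i})$ is a graded right ideal of $A$.

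The next step is to invoke the fact, stated just above the lemma, that every right ideal of the free algebra $A$ is free as a right $A$-module; in particular $J_i$ is free, and because $J_i$ is a graded ideal and $A$ is a positively graded connected algebra over $\k$, a free basis can be chosen to consist of homogeneous elements (pick homogeneous generators and throw away those lying in $J_i\cdot A_{>0}$ plus the module generated by previously selected generators of lower or equal degree; this is a graded Nakayama style argument). Thus $J_i$ is graded-free. Then the short exact sequence
\[
0\longrightarrow M_{<i}\longrightarrow M_{\le i}\xrightarrow{\ \pi_i\ } J_i\longrightarrow 0
\]
splits in the graded sense: for every element of a homogeneous basis of $J_i$ I choose a homogeneous preimage in $M_{\le i}$. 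Call this set of chosen preimages $\widetilde B_i\subset M$.

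Finally, I would show that $B:=\bigsqcup_{i\in I}\widetilde B_i$ is a homogeneous free basis of $M$. For linear independence, suppose a finite relation $\sum \tilde b_k\, a_k=0$ holds among distinct elements of $B$. Let $i$ be the maximal index for which some $\tilde b_k$ belongs to $\widetilde B_i$. Applying $\pi_i$ kills all terms coming from $\widetilde B_j$ with $j<i$, leaving a relation among the elements of the chosen basis of $J_i$, which forces the corresponding $a_k$ to vanish; one then removes these terms and continues by downward transfinite induction on $i$. For generation, given a homogeneous $m\in M$, let $i$ be the largest index in the support of $m$; by construction $\pi_i(m)=\sum b\,c_b$ for basis elements $b$ of $J_i$ with suitable $c_b\in A$, so $m-\sum \tilde b\,c_b$ lies in $M_{<i}$, and one finishes by Noetherian/transfinite induction on the support.

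The only genuinely delicate point is the graded refinement of Cohn's statement — namely that a graded right ideal of $A$ admits a homogeneous basis — but this is immediate from the connected grading of $A$ together with the ungraded freeness, so no real obstacle remains. If one is content to apply the lemma only when $N$ is of finite rank (which is the situation used elsewhere in the paper), the transfinite induction is replaced by ordinary induction on the rank of $N$ and the argument is entirely elementary.
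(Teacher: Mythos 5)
Your argument is correct, but note that the paper does not actually prove this lemma at all: it is quoted directly from Cohn (Prop.~1.2.1 of \emph{Free rings and their relations}), so the comparison is between a citation and your self-contained proof. What you give is the standard Kaplansky/Cohn-style argument (well-order a homogeneous basis of $N$, project onto each coordinate, use that every right ideal of the free algebra is free, splice the resulting splittings together by transfinite induction), adapted to the graded setting; this is essentially the proof behind the cited result, and your adaptation is sound, with the bonus that it makes explicit the only point where the graded statement genuinely exceeds the ungraded one, namely that a graded right ideal $J\subset A$ admits a \emph{homogeneous} free basis. One small gloss there: your ``throw away redundant generators'' recipe plus graded Nakayama only produces a minimal homogeneous generating set of $J$; to see that this set is actually a free basis you need one more input, e.g.\ that $J$ is projective (it is, being ungraded-free), so $\Tor_1^A(J,\k)=0$, and then the kernel $K$ of the map from the graded free module on your generators satisfies $K=K\cdot A_{>0}$ and is bounded below (as $J\subset A$ up to shift), hence $K=0$; alternatively one can observe that Cohn's weak-algorithm proof that right ideals are free respects degrees and directly yields homogeneous bases for homogeneous ideals. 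With that point spelled out the proof is complete, and your closing remark is also apt: for the finite-rank case, which is all the paper ever uses, ordinary induction on the rank of $N$ suffices and no transfinite machinery is needed.
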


The following Proposition is well-known in the non-graded setting, but we prefer to give a careful proof for the graded case. Following our convention we consider $A$ as a dg algebra with zero differential, $D(A)$ is the derived category of dg $A$-modules.
\begin{prop}
\label{prop_module} For the free graded algebra $A$ we have:
\begin{enumerate}
\item Any object in $M\in D(A)$ is isomorphic to
the cone of a homomorphism $g\colon F_1\to F_0$, where $F_0$ and $F_1$ are
free graded $A$-modules and $g\in \Hom_{\grmod A}(F_1,F_0)$ is an injective  homomorphism. In particular, any dg module $M\in D(A)$ is quasi-isomorphic to its cohomology graded module $H(M)$.

\item In the above notation, if $M\in \Perf A$ then $F_0,F_1$ can be chosen to be finitely generated free graded $A$-modules.

\item A dg $A$-module $M$ is in $\Perf A$ if and only if $H(M)$ is a finitely presented $A$-module.
\end{enumerate}
\end{prop}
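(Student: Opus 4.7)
The plan is to establish (1) via a direct dg-module construction, then read off (2) and (3) from essentially the same setup combined with the coherence of $A$.

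For part (1), I start with an arbitrary $M \in D(A)$, form its cohomology graded $A$-module $H(M)$, and choose a surjection $F_0 \twoheadrightarrow H(M)$ from a free graded $A$-module. Since $A$ has zero differential, this surjection lifts to a dg-module homomorphism $F_0 \to M$: send each free generator of $F_0$ to a cycle in $M$ representing its image in $H(M)$. Let $M'$ denote the homotopy fiber, giving a triangle $M' \to F_0 \to M \to M'[1]$. The long exact cohomology sequence identifies $H(M')$ with $\ker(F_0 \to H(M))$, a graded submodule of the free module $F_0$; by Lemma~\ref{lemma_fir} it is itself a free graded $A$-module, which I denote $F_1$.

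The crux of (1) is to promote this identification to an honest quasi-isomorphism of dg modules $F_1 \to M'$, where $F_1$ is viewed as a dg module with zero differential. I would do this by repeating the lift trick: pick cycles in $M'$ representing the images of the free generators of $F_1 = H(M')$ and extend $A$-linearly. The resulting map is a quasi-isomorphism by construction, and its composite with $M' \to F_0$ is a genuine degree-zero homomorphism $g \colon F_1 \to F_0$ of graded $A$-modules. Chasing the long exact sequence identifies $g$ with the inclusion $\ker(F_0 \to H(M)) \hookrightarrow F_0$, so $g$ is injective; hence $M$ is isomorphic in $D(A)$ to the cone of $g$, which is in turn quasi-isomorphic to $\coker(g) \cong H(M)$.

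For part (3), the ``if'' direction repeats the above construction with $F_0$ finitely generated free: then $F_1 = \ker(F_0 \to H(M))$ is finitely generated because $A$ is right graded coherent, and the resulting cone is perfect. For the ``only if'' direction I would induct along the construction of $\Perf A$ as the thick closure of $A$: the property ``$H(M)$ is finitely presented'' holds for $A$, is stable under shifts, is preserved under cones (via the long exact sequence and the abelianness of finitely presented graded $A$-modules), and is preserved under direct summands (retracts of finitely presented modules over a coherent ring are finitely presented). Part (2) is then immediate: for $M \in \Perf A$, part~(3) yields $H(M)$ finitely presented, so both $F_0$ and $F_1$ in the construction of (1) can be chosen finitely generated.

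The main technical hurdle is the formality step in (1), where one must produce an actual dg-module quasi-isomorphism $F_1 \to M'$ rather than merely an isomorphism in the derived category; this is possible only because $F_1$ is projective (in fact free) and coincides with all of $H(M')$, so that lifting the identity on cohomology is unobstructed. Everything else reduces to the coherence of $A$ and routine long-exact-sequence bookkeeping.
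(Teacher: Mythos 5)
Your proposal is correct and takes essentially the same route as the paper: you resolve $H(M)$ by $0\to F_1\to F_0\to H(M)\to 0$ with $F_1$ free via Lemma~\ref{lemma_fir} and realize $M$ as the cone of the inclusion (the paper lifts $s$ and the homotopy directly to build a quasi-isomorphism $Cone(g)\to M$, while you pass through the homotopy fiber of $F_0\to M$ --- an equivalent bookkeeping), and your proof of (3) is exactly the paper's argument that dg modules with finitely presented cohomology form a thick triangulated subcategory containing $A$ by graded coherence, with (2) deduced from (3) as in the paper.
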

\begin{proof}
(1) Let $M$ be a graded dg $A$-module with the differential $d$. Choose a surjection $s\colon F_0\to H(M)$ for some free graded $A$-module $F_0$, let $F_1$ be its kernel and $g\colon F_1\to F_0$ be the inclusion.  By Lemma~\ref{lemma_fir}, $F_1$ is also a free graded $A$-module. Since $F_0$ is free, $s$~lifts to a homomorphism $f_0\colon F_0\to Z(M)\subset M$.  Similarly, $f_0g\colon F_1\to M$ lands in the image of $d$ and thus lifts to a homomorphism  $f_1\colon F_1\to M$ such that $f_0g=df_1$. Now $f_0,f_1$ determine a homomorphism of dg $A$-modules $Cone(F_1\xra{g} F_0)\to M$ which is a quasi-isomorphism.

Since $g$ is injective, $Cone(g)$ is quasi-isomorphic to $\coker g=H(M)$ and the second statement follows.

(3) Assume $H(M)$ is finitely presented, then in the proof of (1) $F_0$ and $F_1$  can be taken to be free finitely generated. Thus $F_0,F_1\in \Perf A$ and consequently $M\in\Perf A$.

Now assume $M\in\Perf A$, we need to show that $H(M)$ is finitely presented. Let $\TT\subset D(A)$ denote the full subcategory of dg $A$-modules with finitely presented cohomology. We have $A\in \TT$. Also, $\TT$ is triangulated. Indeed, since algebra $A$ is graded coherent, the full subcategory of finitely presented graded $A$-modules is an abelian and extension-closed subcategory in the abelian category of all graded $A$-modules. Therefore is closed under taking cones and thus triangulated. Also, $\TT$ is thick. It follows that $\Perf A=\langle A\rangle\subset \TT$ and $H(M)$ is finitely presented.

(2) It is clear since $H(M)$ is finitely presented by (3).
\end{proof}

\begin{remark}
Any homomorphism
$$g\colon F_1=\oplus_{j=1}^n \Lambda [c_j]\to \oplus_{i=1}^m \Lambda [d_i] = F_0\in \Mat*(\Lambda )$$
is given by a matrix $G\in Mat_{m\times n}(\Lambda )$ with $G_{ij}\in \Lambda _{d_i-c_j}$. Note that
not any matrix over~$\Lambda $ with homogeneous components defines a homomorphism of free modules (for example, the matrix $\begin{pmatrix} 1& 1\\ 1& x\end{pmatrix}$ does not if $\deg x>0$). Also note that the matrix $G$ does not determine the modules $F_1$ and $F_0$ uniquely (or uniquely up to some shift). Indeed, for $G=0\in Mat_{m\times n}(\Lambda )$ one cannot say anything about grading of $F_1$ and $F_0$.
\end{remark}

\begin{definition}\label{m-g}
For a homomorphism $g\colon F_1\to F_0$ in $\Mat*(\Lambda )$ given by a  matrix $G$  we denote by $M_g$
the cone of $g$, it is a dg $\Lambda $-module. Sometimes we also denote this cone by $M_G$.
In particular, if $x\in \Lambda $ is a homogeneous element of degree $d$ then we denote by $M_x$ the cone $Cone(\Lambda [-d]\xra{x}\Lambda )$.
\end{definition}

For future reference we reformulate part of Proposition~\ref{prop_module} as
\begin{corollary}
\label{cor_module}
Let $A$ be a free graded algebra.
Any object in $\Perf A$ is isomorphic to a graded module $M_g$ for some injective $g\in \Mat*(A)$.
\end{corollary}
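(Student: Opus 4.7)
The statement is essentially an immediate repackaging of Proposition~\ref{prop_module}, so the plan is simply to invoke its three parts in the correct order rather than to do new work. Given $M\in\Perf A$, I would first apply part (3) of the proposition to conclude that the cohomology graded module $H(M)$ is finitely presented as a graded $A$-module. This supplies a presentation $F_1\to F_0\to H(M)\to 0$ in which both $F_0$ and $F_1$ are finitely generated free graded $A$-modules: $F_0$ surjects onto $H(M)$ and, by Lemma~\ref{lemma_fir} applied to the kernel of that surjection (which is a graded submodule of a finitely generated free module over a free ideal ring), the kernel is again free; finite generation of the kernel follows from finite presentation.

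Next, taking $g\colon F_1\to F_0$ to be the inclusion of this kernel gives an injective degree-preserving homomorphism between finitely generated free graded $A$-modules, i.e.\ an element of $\Mat*(A)$. The construction from the proof of part (1) of Proposition~\ref{prop_module} — lifting the surjection $F_0\to H(M)$ through cocycles and then lifting $f_0 g$ through the differential — produces a morphism of dg modules $M_g=\mathrm{Cone}(g)\to M$ which is a quasi-isomorphism. Here the fact that $g$ is injective guarantees $H(M_g)\cong \coker g\cong H(M)$, so the comparison map is indeed an isomorphism in $D(A)$.

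Finally, part (2) of the proposition guarantees that the free modules appearing in this construction can be taken finitely generated precisely because $H(M)$ is finitely presented, so $M_g$ lies in $\Perf A$ and represents $M$ as required. There is no real obstacle to overcome, only a compatibility to keep straight: one should note that the matrix realization discussed in the remark after Definition~\ref{m-g} shows that every injective homomorphism produced this way is genuinely described by a matrix in $\Mat*(A)$, so the output conforms to the definition of $M_g$ given in Definition~\ref{m-g}. In short, the corollary follows by concatenating parts (3), (1) and (2) of Proposition~\ref{prop_module} in that order.
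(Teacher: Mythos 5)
Your argument is correct and matches the paper's treatment: the corollary is just a repackaging of Proposition~\ref{prop_module}, and your chain (3)$\Rightarrow$(1)$\Rightarrow$(2) is exactly how the paper's proof of that proposition already works, with the injective $g\in\Mat*(A)$ coming from the kernel of a surjection of a finitely generated free module onto the finitely presented module $H(M)$. The only redundancy is that once you have built $F_0,F_1$ finitely generated by hand, the final appeal to part (2) is not needed, but this is harmless.
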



\begin{lemma}
\label{lemma_dissection}
Let $A=A_N$ be  a free graded algebra and  $M$ be a finitely presented graded $A$-module.
Then there exists a free submodule $F\subset M$ of finite rank such that $M/F$ is a finite-dimensional $A$-module.
\end{lemma}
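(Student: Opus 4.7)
My plan is to prove the lemma by induction on the rank $m$ of $F_0$, where $0 \to F_1 \to F_0 \to M \to 0$ is an exact sequence with $F_0, F_1$ finitely generated graded free modules; such a presentation exists for any finitely presented $M$ by Proposition~\ref{prop_module}, after replacing $F_1$ by its image in $F_0$ and invoking Lemma~\ref{lemma_fir} for freeness. The trivial case $M=0$ is set aside.

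For the base case $m=1$, I write $F_0 = eA \cong A[-d_0]$ with $e$ in degree $d_0$; then $F_1 \subset F_0$ corresponds (up to the grading shift) to a finitely generated free right ideal $\bigoplus_{j=1}^n a_j A \subset A$, where the $a_j$ are $A$-linearly independent homogeneous elements of degrees $\epsilon_j$. For $d \ge \max_j \epsilon_j$, my candidate is $F := (\text{image of } F_0 \m^d \text{ in } M)$. There are two sub-steps: (a) verify $F_1 \cap F_0\m^d = e \cdot \bigoplus_j a_j \m^{d-\epsilon_j}$ as a direct sum, by comparing homogeneous components in $\sum_j a_j b_j \in \m^d$ and applying $A$-linear independence of the $a_j$; and (b) show $\m^d / \bigoplus_j a_j \m^{d-\epsilon_j}$ is itself free. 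The crux is (b): $\m^d$ is a free $A$-module of rank $N^d$ with all generators in degree $d$, so any $\k$-basis of its degree-$d$ slice $A_d$ is automatically an $A$-basis of $\m^d$. Extending the $\k$-linearly independent collection $\{a_j u : j,\, u \text{ a length-}(d-\epsilon_j)\text{ monomial}\}$ inside $A_d$ to a full $\k$-basis by adjoining elements $b_1,\ldots,b_r$ then exhibits $\bigoplus_k b_k A$ as a free complement to $F_1 \cap F_0\m^d$ inside $F_0\m^d$. Finally $M/F$ is a quotient of the finite-dimensional $F_0/F_0\m^d \cong (A/\m^d)[-d_0]$, hence is finite-dimensional.

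For the inductive step $m \ge 2$, I split $F_0 = F_0' \oplus e_m A$ with $F_0'$ of rank $m-1$, set $F_1''$ equal to the image of $F_1$ in $e_m A$ (free by Lemma~\ref{lemma_fir}, and finitely generated as a quotient of $F_1$), and $F_1' = F_1 \cap F_0'$. Since $F_1''$ is free hence projective, the sequence $0 \to F_1' \to F_1 \to F_1'' \to 0$ splits, so $F_1'$ is finitely generated as a direct summand of $F_1$. The snake lemma then produces $0 \to M' \to M \to M'' \to 0$ with $M' = F_0'/F_1'$ (to which the inductive hypothesis applies) and $M'' = e_m A/F_1''$ (to which the base case applies), yielding free submodules of finite rank $F' \subset M'$ and $F'' \subset M''$ with finite-dimensional quotients. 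Since $F''$ is projective, I lift it to $\tilde F'' \subset M$ mapping isomorphically onto $F''$; then $\tilde F'' \cap M' = 0$, so $F := F' \oplus \tilde F''$ is free of finite rank in $M$, and a second application of the snake lemma identifies $M/F$ as an extension of $M''/F''$ by $M'/F'$, hence finite-dimensional.

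The principal obstacle I expect is step (b) of the base case: freeness of $\m^d / \bigoplus_j a_j \m^{d-\epsilon_j}$ draws on the concrete structure of $A$ as a free graded algebra rather than on general FIR properties, via the fact that $\m^d$ is generated in the single degree $d$. The inductive step is, by contrast, essentially formal, using only Lemma~\ref{lemma_fir}, projectivity of free modules, and the snake lemma.
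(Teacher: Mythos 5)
Your proof is correct, and both the base case and the inductive step check out: in the rank-one case the identification $F_1\cap F_0\m^d=e\cdot\bigoplus_j a_j\m^{d-\epsilon_j}$ follows from comparing homogeneous components and the $A$-independence of the $a_j$, and your key observation that any $\k$-basis of $A_d$ is an $A$-basis of $\m^d$ (so that extending $\{a_ju\}$ to such a basis splits off a free complement) is valid; the dévissage in the inductive step, using the splitting $F_1\cong F_1'\oplus F_1''$ and the projective lifting of $F''$, is also sound. That said, your route differs in organization from the paper's, even though the underlying mechanism is the same: your candidate submodule, the image of $F_0\m^d$ in $M$, is exactly the paper's truncation $\tau_{\ge d}M$ (in the rank-one case), and the paper proves its freeness uniformly for any rank of $F_0$ in one stroke, by noting that for $d\gg 0$ both $\tau_{\ge d}F_1$ and $\tau_{\ge d}F_0$ are free modules generated in the single degree $d$, so the truncated inclusion $\tau_{\ge d}g$ is given by a rectangular matrix over $\k$ and hence splits, making $\tau_{\ge d}M\cong\coker(\tau_{\ge d}g)$ free of finite rank. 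This renders your induction on $\operatorname{rank}F_0$, the snake-lemma bookkeeping, and the lifting of $F''$ unnecessary; conversely, your argument is more explicit (it produces an explicit monomial-type basis of the complement in the cyclic case) but only handles rank one directly and must bootstrap from there. Both approaches are legitimate; the paper's scalar-matrix splitting is the shorter and more uniform one, and you may wish to note that your rank-one computation is precisely that splitting argument specialized to a single free generator.
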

\begin{proof}
Let $d\in\Z$. For a graded  $A$-module $L$ denote by $\tau_{\ge d}L$ the graded $A$-module defined as follows
$$(\tau_{\ge d}L)_i:=L_i\quad\text{for}\quad i\ge d,\quad \text{and}\quad (\tau_{\ge d}L)_i:=0\quad\text{otherwise.}$$
Clearly, $\tau_{\ge d}L$ is a submodule of $L$ and the quotient $L/\tau_{\ge d}L$ is finite-dimensional as soon as $L$ is finitely generated.
Also note that $\tau_{\ge d}$ defines an exact functor on the abelian category of graded $A$-modules.

The reader is welcome to check that
\begin{equation}
\tau_{\ge d}(A[m])\cong \begin{cases} A^{N^{d+m}}[-d] & d+m\ge 0\\ A[m] & d+m<0.\end{cases}
\end{equation}

By Proposition~\ref{prop_module}, $M$ is the cokernel of an injective homomorphism
$g\colon F_1\to F_0$ of free graded  $A$-modules of finite rank.
Now let us take $d$ big enough such that $d+m\ge 0$ for any direct summand $A[m]$ in $F_0$ or in $F_1$. Then we have $\tau_{\ge d}(F_0)\cong A^{a_0}[-d]$ and $\tau_{\ge d}(F_1)\cong A^{a_1}[-d]$ for some $a_0,a_1\in\Z$. The embedding
$$\tau_{\ge d}g\colon \tau_{\ge d}(F_1)=A^{a_1}[-d]\to A^{a_0}[-d]=\tau_{\ge d}(F_0)$$ is given by a rectangular matrix over $\k$, hence it is a split embedding. Therefore $F:=\tau_{\ge d} M\cong \coker(\tau_{\ge d}g)$ is isomorphic to $A^{a_0-a_1}[-d]$, it is a free graded $A$-module of finite rank.

It remains to observe that $M/\tau_{\ge d}M$ is finite-dimensional as noted before.
\end{proof}

%
%

\section{Modules of the form $M_x$}
\label{section_Mx}

In this section we concentrate on subcategories in $\Perf A$ generated by one or several dg $A$-modules $M_x$, where $x\in A$ is a homogeneous element.
Recall that
$$M_x=Cone (A[-d]\xra{x} A),$$
 where  $d=\deg x$. Since $A$ has no zero divisors, $M_x$ is quasi-isomorphic to the cyclic graded $A$-module $A/xA$.

In this section we find it more useful to use graded modules $A/xA$ instead of dg modules $M_x$, but the reader should keep in mind that they define isomorphic objects of the category $\Perf A$.

In geometrical terms, dg modules $M_x\in\Perf A$ correspond under the equivalence
$K'\colon D^b(R\mmod)\to (\Perf A)^{op}$ to complexes $F\in D^b(R\mmod)$ such that $$\sum_i \dim H^i(F)=2.$$

Some part of the work is done in a more general setting of graded rings with unique prime decomposition, so called \emph{rigid UFD's} (see Section~\ref{section_UFD}).  Although, the main results of this section hold only for free graded algebras. For  convenience of the reader we try to use different notation: a general graded algebra is denoted by $\Lambda$, whereas  $A$ stands for a free graded algebra.

\begin{lemma} 
\label{lemma_shortexact} 
Let $\Lambda $ be a graded ring having no zero divisors, let $a,b\in \Lambda $ be homogeneous elements, $\deg (a)=d$. Then we have the natural short exact sequence of graded $\Lambda $-modules
$$0\to \Lambda /b\Lambda [-d]\stackrel{f}{\to}\Lambda /ab\Lambda \stackrel{g}{\to }\Lambda /a\Lambda \to 0$$
where $f$ is the left multiplication by $a$ and $g$ is the projection.
\end{lemma}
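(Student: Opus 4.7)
The plan is a direct diagram chase; the only nontrivial input is the absence of zero divisors, which enters exactly once, in the injectivity of $f$.

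First, I would verify that $f$ and $g$ are well-defined graded homomorphisms. For $g$, the containment $ab\Lambda\subseteq a\Lambda$ makes the projection $\lambda+ab\Lambda\mapsto \lambda+a\Lambda$ well-defined, and it is evidently surjective and degree-preserving. For $f$, if $\lambda\in b\Lambda$, say $\lambda=b\mu$, then $a\lambda=ab\mu\in ab\Lambda$, so $f(\lambda+b\Lambda)=a\lambda+ab\Lambda$ depends only on the coset. For the grading: if $\lambda\in(\Lambda/b\Lambda)_i=(\Lambda/b\Lambda[-d])_{i+d}$, then $a\lambda\in(\Lambda/ab\Lambda)_{i+d}$, so $f$ preserves degree. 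Both $f$ and $g$ are obviously right-$\Lambda$-linear.

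Next, I would check exactness at each of the three spots. At $\Lambda/a\Lambda$, surjectivity of $g$ is immediate. In the middle, I would compute both sides explicitly: $\im f=\{a\lambda+ab\Lambda:\lambda\in\Lambda\}=a\Lambda/ab\Lambda$, while $\ker g=\{\lambda+ab\Lambda:\lambda\in a\Lambda\}=a\Lambda/ab\Lambda$, and the two coincide. At $\Lambda/b\Lambda[-d]$, injectivity of $f$ is the only place the hypothesis is used: if $f(\lambda+b\Lambda)=0$, then $a\lambda\in ab\Lambda$, i.e.\ $a\lambda=ab\mu$ for some homogeneous $\mu$; since $\Lambda$ has no zero divisors and $a\ne 0$ (as its degree $d$ is well-defined, but even if $a=0$ the statement is trivial with the convention $0\Lambda=0$), we may cancel $a$ to obtain $\lambda=b\mu\in b\Lambda$, so $\lambda+b\Lambda=0$.

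There is no real obstacle here: the whole verification is formal once the shift convention $M[-d]_j=M_{j-d}$ is pinned down so that $f$ lands in the correct degree. The only subtle point is the injectivity of $f$, which is precisely where the no-zero-divisors assumption is unavoidable — without it, $a\lambda=ab\mu$ would not imply $\lambda=b\mu$.
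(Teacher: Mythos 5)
Your proof is correct and follows essentially the same route as the paper: identify $\ker g$ as $a\Lambda/ab\Lambda$ and observe that left multiplication by $a$ maps $\Lambda/b\Lambda[-d]$ isomorphically onto it, with the no-zero-divisors hypothesis used exactly for injectivity. (Your parenthetical about $a=0$ is the only slip — with $0\cdot\Lambda=0$ the sequence would read $0\to\Lambda/b\Lambda[-d]\to\Lambda\to\Lambda\to 0$ and fail to be exact on the left unless $b$ is a unit — but the lemma is only ever applied to nonzero homogeneous elements, as the paper also tacitly assumes.)
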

\begin{proof}
Let 
$g\colon \Lambda /ab\Lambda\to \Lambda /a\Lambda$
be the projection, then $\ker g\cong a\Lambda /(ab\Lambda )$. Left multiplication by $a$
gives an isomorphism  $\Lambda /b\Lambda [-d]\to a\Lambda /(ab\Lambda )$ of graded $\Lambda$-modules (since $\Lambda$ has no zero divisors). 
\end{proof}

\begin{lemma}
\label{lemma_Mg}
Let $A$ be a free graded algebra, let $a_1,a_2,a_3\in A^*$ be homogeneous elements. Then
\begin{enumerate}
\item $A/{a_1a_2}A\in \langle A/{a_1}A,A/{a_2}A\rangle$;
\item $A/{a_1}A\in \langle A/{a_1a_2}A,A/{a_2}A\rangle$,
 $A/{a_2}A\in \langle A/{a_1a_2}A,A/{a_1}A\rangle$ ;
\item $\langle A/{a_1a_2}A,A/{a_2a_3}A\rangle=\langle A/{a_1}A,A/{a_2}A,A/{a_3}A\rangle$;
\item $\langle A/{a_1^n}A\rangle=\langle A/{a_1}A\rangle$ for any $n\ge 1$.
\end{enumerate}
\end{lemma}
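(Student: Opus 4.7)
My plan is to reduce all four parts to the distinguished triangle provided by Lemma~\ref{lemma_shortexact}, combined with the fact that the free graded algebra $A$ has global dimension~$1$, so any two-term complex in $D(A)$ splits as the direct sum of its cohomology modules (the $\Ext^2$-obstruction vanishes).

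Parts~(1) and~(2) follow directly from Lemma~\ref{lemma_shortexact} applied with $a=a_1,\ b=a_2$, which gives the distinguished triangle
$$A/a_2A[-d_1]\to A/a_1a_2A\to A/a_1A\to A/a_2A[-d_1+1]$$
in $D(A)$, where $d_i=\deg a_i$. In any thick triangulated subcategory two of the three vertices of a triangle generate the third: (1) is the case where the two outer vertices generate the middle, and (2) is the case where one outer plus the middle generates the remaining outer.

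For part~(4), iterating~(1) gives $A/a_1^nA\in\langle A/a_1A\rangle$ for every $n\ge1$. For the reverse inclusion I would consider the left-multiplication endomorphism
$$f\colon A/a_1^nA[-d_1]\longrightarrow A/a_1^nA,\qquad y\mapsto a_1y.$$
Since $A$ has no zero divisors, $\ker f=a_1^{n-1}A/a_1^nA\cong A/a_1A$ and $\coker f=A/a_1A$, both up to grading shift. Because $A$ is hereditary, $\mathrm{cone}(f)$ splits in $D(A)$ as $A/a_1A\oplus A/a_1A[\mathrm{shift}]$. Both source and target of $f$ are shifts of $A/a_1^nA$, so $\mathrm{cone}(f)\in\langle A/a_1^nA\rangle$, and thickness delivers $A/a_1A\in\langle A/a_1^nA\rangle$.

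The main obstacle is~(3). The inclusion $\subseteq$ is immediate from~(1). For $\supseteq$ I would introduce the ``bridge'' morphism
$$\phi\colon A/a_2a_3A\longrightarrow A/a_1a_2A,\qquad y\mapsto a_1y,$$
which is a well-defined right $A$-module homomorphism because $a_1\cdot a_2a_3=(a_1a_2)\cdot a_3\in a_1a_2A$. Left cancellation in $A$ (which has no zero divisors) shows that $\ker\phi=a_2A/a_2a_3A\cong A/a_3A$ (up to shift) and $\coker\phi=A/a_1A$. The same hereditary-splitting argument then yields
$$\mathrm{cone}(\phi)\cong A/a_1A\oplus A/a_3A[\mathrm{shift}]\quad\text{in } D(A),$$
and $\mathrm{cone}(\phi)\in\langle A/a_1a_2A,A/a_2a_3A\rangle$. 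Thickness gives $A/a_1A,\,A/a_3A\in\langle A/a_1a_2A,A/a_2a_3A\rangle$; finally~(2) applied to $A/a_1a_2A$ and $A/a_1A$ produces the remaining atom $A/a_2A$. The delicate step, which I expect to be the main obstacle, is recognizing the bridge $\phi$: it must simultaneously realize $A/a_1A$ as a cokernel and (a shift of) $A/a_3A$ as a kernel, so that the $\mathrm{gl.\,dim.}\,1$ splitting of its cone delivers the two ``outer'' atoms in a single stroke, after which the middle atom $A/a_2A$ is supplied cheaply by~(2).
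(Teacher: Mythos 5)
Your proof is correct and follows essentially the same route as the paper: parts (1) and (2) come from the short exact sequence of Lemma~\ref{lemma_shortexact}, and part (3) uses the very same left-multiplication-by-$a_1$ map $A/a_2a_3A\to A/a_1a_2A$, whose cone splits into its cohomology (Proposition~\ref{prop_module}) to deliver $A/a_1A$ and $A/a_3A$, with (2) then supplying $A/a_2A$. The only cosmetic difference is that the paper obtains (4) from (3) by specializing $a_2=a_1^{n-1}$, $a_3=a_1$, whereas you run the same cone argument directly on $A/a_1^nA$.
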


\begin{proof}
Denote $d_i=\deg a_i$.

Lemma \ref{lemma_shortexact} immediately implies (1),(2) and $\subset$ parts of (3) and (4).

For (3), consider the homomorphism
$$f\colon A/(a_2a_3A)[-d_1]\xra{a_1\cdot} A/(a_1a_2A)$$
given by left multiplication by $a_1$.
One has
$$\ker f=a_2A/(a_2a_3A)[-d_1]\cong A/a_3A[-d_1-d_2]\quad \text{and} \quad \coker f=A/a_1A$$
By Proposition~\ref{prop_module}, the cone of $f$  is isomorphic in $\Perf A$ to its cohomology, which is
$(\ker f)[1]\oplus \coker f$. Thus $A/{a_1}A$ and $A/{a_3}A$ belong to the l.h.s. in (3). By (2) $A/{a_2}A$ also does. Hence $\supset$ in (3) holds.

(4) follows from (3) by taking $a_3=a_1, a_2=a_1^{n-1}$.
\end{proof}

\begin{remark}
Note that  Lemma~\ref{lemma_Mg} also follows readily from Corollary~\ref{cor_G1G2} and Lemma~\ref{lemma_g1g2g3}  from Section~\ref{section_localization}.
\end{remark}

\subsection{Free algebras as rigid UFD's}
\label{section_UFD}

Here we recall some notions related to prime decomposition in noncommutative rings. We refer to \cite{Co2} for the terminology.

Let $\Lambda$ be a graded integral domain: that is, a graded associative unital ring with no zero divisors. The set $\Lambda^*$ of nonzero homogeneous elements in $\Lambda$ is a multiplicative semigroup. Denote by $U(\Lambda)\subset \Lambda$ the  set of units. We have $U(\Lambda)\subset \Lambda^*$ since $\Lambda$ is a domain.

Elements $a,b\in\Lambda^*$  are called \emph{right} (resp. \emph{left}) \emph{associated} if $a=bu$ (resp. $a=ub$) for some unit $u\in\Lambda$. Note that right and left associatedness are the same if and only if the subgroup $U(\Lambda)\subset \Lambda^*$ is normal. If this is the case, we will simply use the term \emph{associated}.

An element $a\in\Lambda^*$ is called \emph{prime} if  $a$ is not a unit and $a$ is not a product of two homogeneous non-units.

\begin{definition}
\label{def_strong}
Let $\Lambda$ be a graded integral domain. We say that $\Lambda$ is a \emph{graded rigid unique factorization domain} (or a \emph{graded rigid UFD}) if
\begin{itemize}
\item the subgroup $U(\Lambda)\subset \Lambda^*$ is normal
\item the semigroup $\Lambda^*/U(\Lambda)$ is (noncommutative) free.
\end{itemize}
\end{definition}

\begin{remark}
By definition, in rigid UFD's elements are right associated if and only if they are left associated.
\end{remark}

Clearly, nice commutative rings like integers or polynomials over a field are not rigid UFD's. What is important for us, free graded algebras are graded rigid UFD's  by \cite[Prop. 6.6.3]{Co}.

One has the following, see \cite[Th. 7.1]{Co2} for the non-graded case.
\begin{prop}
\label{prop_UFD}
Let $\Lambda$ be a graded rigid UFD. Then any non-unit in $\Lambda^*$ has a factorization into homogeneous primes. If an element $a\in\Lambda^*$ has two such prime factorizations
$$a=p_1\ldots p_r=q_1\ldots q_s$$
then $r=s$ and $p_i$ is associated with $q_i$ for any $i$.
\end{prop}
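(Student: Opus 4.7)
The plan is to reduce everything to properties of the free monoid $S := \Lambda^*/U(\Lambda)$, which by assumption is a noncommutative free monoid, so $S \cong X^*$ for a distinguished set of free generators $X\subset S$. Writing $\pi\colon \Lambda^*\to S$ for the projection, all representatives can be chosen homogeneous since $\Lambda^*$ consists of homogeneous elements.

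The key preliminary observation to establish is that an element $p\in\Lambda^*$ is prime if and only if $\pi(p)\in X$. Indeed, in the free monoid $X^*$ an element fails to be the identity and cannot be written as a product of two non-identity elements precisely when it lies in $X$. Translating back to $\Lambda^*$ via $\pi$, this says that $\pi(p)\in X$ exactly when $p$ is not a unit and admits no factorization $p=bc$ with $b,c$ homogeneous non-units, which is the definition of prime. Once this is in place, existence follows immediately: if $a\in\Lambda^*$ is a non-unit then $\pi(a)\ne 1$, so write uniquely $\pi(a)=x_1\cdots x_r$ with $x_i\in X$, pick homogeneous lifts $p_i\in\Lambda^*$ of the $x_i$ (prime by the preliminary observation), and note that $p_1\cdots p_r = au$ for some unit $u\in U(\Lambda)$. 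Replacing $p_r$ by $p_ru^{-1}$ (still in the class $x_r$, hence still prime and homogeneous) yields the desired factorization $a=p_1\cdots p_r$.

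For uniqueness, suppose $a=p_1\cdots p_r = q_1\cdots q_s$ with all $p_i,q_j$ prime. Applying $\pi$ gives two equal expressions of $\pi(a)$ as products of elements of $X$ in $X^*$. By uniqueness of expression in a free monoid, $r=s$ and $\pi(p_i)=\pi(q_i)$ for every $i$, which means $p_i = q_i u_i$ for some unit $u_i$; that is, $p_i$ is right associated with $q_i$. Since $U(\Lambda)$ is assumed to be normal in $\Lambda^*$, right associatedness coincides with left associatedness, so the relation is simply ``associated'' as required.

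I do not anticipate a serious obstacle: the proposition is essentially a translation of the defining properties of rigid UFD's through the quotient $\pi$, and the only minor technical point is the unit adjustment in the existence step to convert the equality $p_1\cdots p_r = au$ in $\Lambda^*$ into an exact prime factorization of $a$.
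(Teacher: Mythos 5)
Your argument is correct. Note, though, that the paper does not prove this proposition at all: it is stated with a reference to Cohn's theorem for the non-graded case, so there is no in-paper proof to compare with. What you give is a self-contained deduction directly from Definition~\ref{def_strong}, and it is the natural one: since that definition already packages unique factorization into the freeness of the quotient monoid $\Lambda^*/U(\Lambda)$, the proposition is essentially a formal translation through $\pi\colon\Lambda^*\to\Lambda^*/U(\Lambda)$, which is exactly what you carry out. The three places where care is needed are all handled or harmless: (i) the identification ``prime $\Leftrightarrow$ free generator'' requires, in the forward direction, lifting a factorization $\pi(p)=st$ with $s,t\neq 1$ to $p=b(cu)$ with $b,cu$ homogeneous non-units --- this works because representatives of cosets and units are themselves nonzero homogeneous elements ($U(\Lambda)\subset\Lambda^*$, as the paper notes); (ii) the unit adjustment $p_r\mapsto p_ru^{-1}$ in the existence step keeps $p_r$ in the same coset, hence prime and homogeneous; (iii) normality of $U(\Lambda)$ is used twice, once so that the coset monoid is well defined and once to convert right associatedness $p_i=q_iu_i$ into associatedness, matching the paper's remark after Definition~\ref{def_strong}. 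So the proof is complete and fills a gap the paper delegates to the literature.
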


\begin{definition}
Let $\Lambda$ be a graded rigid UFD and $a\in \Lambda^*$.
We denote the number  of prime factors of $a$ by $l(a)$ and call it the \emph{length} of $a$. Clearly,
length  has the following properties:
\begin{itemize}
\item $l(a)\ge 0$; $l(a)=0$ if and only if $a$ is a unit; $l(a)=1$ if and only if $a$ is prime;
\item $l(ab)=l(a)+l(b)$.
\end{itemize}
\end{definition}

The following lemma is an easy consequence  of  Proposition~\ref{prop_UFD}.

\begin{lemma}
\label{lemma_abcd}
Let $\Lambda$ be a graded rigid UFD (for example, $\Lambda$ can be a free graded algebra). Suppose $a,b,c,d\in\Lambda^*$ and $ab=cd$. Then either
\begin{enumerate}
\item $a=ce, d=eb$ for some $e\in \Lambda^*$,  or
\item $c=ae, b=ed$ for some $e\in \Lambda^*$.
\end{enumerate}
Moreover, if $l(a)\ge l(c)$ then (1) holds, if $l(a)\le l(c)$ then (2) holds and  if $l(a)=l(c)$ then $e$ is a unit.
\end{lemma}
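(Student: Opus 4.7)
The plan is to reduce the identity $ab = cd$ to its two prime factorizations in $\Lambda$ and then compare them using Proposition~\ref{prop_UFD}. First I would dispose of the degenerate cases where one of $a,b,c,d$ is a unit: e.g. if $a \in U(\Lambda)$, then $l(a) = 0 \le l(c)$, and setting $e := a^{-1}c$ one checks directly that $c = ae$ and $b = a^{-1}ab = a^{-1}cd = ed$; the three analogous cases are similar. From now on assume all four elements are non-units, so they admit prime factorizations.

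Write $a = p_1 \cdots p_r$ and $b = p_{r+1} \cdots p_n$ with $p_i$ prime, and similarly $c = q_1 \cdots q_s$, $d = q_{s+1} \cdots q_m$ with $q_j$ prime. Then $p_1 p_2 \cdots p_n$ and $q_1 q_2 \cdots q_m$ are both prime factorizations of $ab = cd$, so Proposition~\ref{prop_UFD} forces $n = m$ and gives units $u_i \in U(\Lambda)$ with $p_i = q_i u_i$ for every $i$.

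Now suppose $l(a) = r \ge s = l(c)$; the opposite inequality is handled symmetrically by swapping the two factorizations, and yields case (2). Substituting $p_i = q_i u_i$ into the formula for $a$ gives
$$a = q_1 u_1 q_2 u_2 \cdots q_r u_r.$$
Because $U(\Lambda)$ is normal in $\Lambda^*$, for any $u \in U(\Lambda)$ and any $x \in \Lambda^*$ there exists $u' \in U(\Lambda)$ with $u x = x u'$. Iterating this rule to transport each $u_i$ past the remaining $q_j$'s on the right, I collapse all the units into a single one and obtain $a = q_1 q_2 \cdots q_r \cdot w$ for some $w \in U(\Lambda)$. Setting $e := q_{s+1} \cdots q_r \cdot w$ then gives $a = ce$; the element $e$ lies in $\Lambda^*$ (a product of nonzero homogeneous elements is nonzero homogeneous since $\Lambda$ is a graded domain). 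Finally, substituting $a = ce$ into $ab = cd$ gives $ceb = cd$, and left-cancellation of $c$ — valid in the domain $\Lambda$ — yields $d = eb$. When $r = s$, the product $q_{s+1}\cdots q_r$ is empty, so $e = w$ is a unit, proving the last clause.

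The main subtlety, and the only step where the rigidity hypothesis is essential, is the manipulation $q_1 u_1 q_2 u_2 \cdots q_r u_r \rightsquigarrow q_1 q_2 \cdots q_r \cdot w$: without normality of $U(\Lambda) \subset \Lambda^*$ (Definition~\ref{def_strong}), there would be no reason for the units $u_i$ to propagate past the primes $q_j$, and $a$ might fail to admit a right factorization starting with $c$. Everything else is bookkeeping of prime factorizations and a cancellation in a domain.
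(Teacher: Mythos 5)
Your proof is correct and follows exactly the route the paper intends: the paper gives no explicit argument, stating only that the lemma is "an easy consequence of Proposition~\ref{prop_UFD}", and your write-up is precisely that deduction — compare the two prime factorizations of $ab=cd$, use normality of $U(\Lambda)$ to sweep the units to the right, and cancel $c$ (or $a$) in the domain $\Lambda$. The degenerate unit cases and the use of $ux=xu'$ (the correct reading of normality here, as confirmed by the paper's remark that left and right associatedness coincide) are handled appropriately.
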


The following proposition is given as a motivation for Definition \ref{def_goodelement} below. We will use only part (2) of this proposition.

\begin{prop} 
\label{prop_modulesfromfactorizations} 
Let $\Lambda$ be a graded rigid UFD (for example, $\Lambda$ can be a free graded algebra). Fix $x,y\in \Lambda ^*$ and consider the graded modules $\Lambda /x\Lambda $ and $\Lambda /y\Lambda$. Then the following holds:
\begin{enumerate}
\item $\Hom^{\bul}_{\grmod \Lambda}(\Lambda /x\Lambda ,\Lambda /y\Lambda)\neq 0$ if and only if $x=ca,\ y=bc$ for some $a,b,c\in \Lambda ^*$ with $l(c)>0$.

\item $\Lambda /x\Lambda $ is a submodule of $\Lambda /y\Lambda [i]$ for some $i$ if and only if $y=dx$ for some $d\in \Lambda ^*$. If it is the case, the quotient module is isomorphic to $\Lambda/d\Lambda[i]$.

\item  $\Lambda /x\Lambda $ has a quotient module $\Lambda /y\Lambda [i]$    for some $i$ if and only if $x=yd$ for some $d\in \Lambda ^*$. If it is the case, the kernel is isomorphic to $\Lambda/d\Lambda$.

\item $\Lambda /x\Lambda \cong \Lambda /y\Lambda [i]$ for some $i$ if and only if $x$ and $y$ are associated. 
\end{enumerate}
\end{prop}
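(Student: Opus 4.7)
The strategy is to reduce all four parts to a single observation: a degree-preserving graded homomorphism $\phi\colon \Lambda/x\Lambda \to \Lambda/y\Lambda[i]$ is encoded by a homogeneous element $b\in \Lambda$ with $\phi(\bar 1)=\bar b$, well-definedness requires $bx\in y\Lambda$, i.e.\ $bx=yd$ for some $d$, and $\phi=0$ iff $b\in y\Lambda$. Part (1) then follows from Lemma~\ref{lemma_abcd} applied to $bx=yd$: of the two alternatives produced there, one gives $b\in y\Lambda$ and hence $\phi=0$, so the other must hold, providing $y=bc$ and $x=ca$ for some $c\in \Lambda^*$; the same observation shows that $c$ a unit would again force $b\in y\Lambda$, so $l(c)>0$. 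Conversely, any such factorization produces a nonzero $\phi$ by $\phi(\bar 1)=\bar b$.

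For (2), starting from a nonzero $\phi$ and the factorization $x=ca$, $y=bc$ of (1), the kernel of the lift $\tilde\phi\colon \Lambda\to \Lambda/y\Lambda[i]$, $1\mapsto b$, is computed as $\{r\in\Lambda : br\in y\Lambda\}=c\Lambda$, the last equality using $y=bc$ and the absence of zero divisors. Hence $\phi$ is injective iff $c\Lambda=x\Lambda$, iff $c=xu$ for some unit $u\in U(\Lambda)$; rewriting $y=bc=bxu$ and commuting $u$ past $x$ using the normality of $U(\Lambda)$ in $\Lambda^*$ yields $y=dx$ for some $d\in \Lambda^*$, and the quotient is read off from Lemma~\ref{lemma_shortexact} applied to $y=dx$. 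Part (3) is parallel: surjectivity of $\phi$ forces $b\Lambda+y\Lambda=\Lambda$, and since $y=bc$ already puts $y\in b\Lambda$, this reduces to $b\Lambda=\Lambda$, i.e.\ $b\in U(\Lambda)$ (a one-sided inverse is two-sided in a domain); normality then converts $x=ca$ into $x=yd$, and the kernel $y\Lambda/x\Lambda$ is identified with $\Lambda/d\Lambda$ via left multiplication by $y$.

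Finally, (4) combines (2) and (3): an isomorphism yields both $y=dx$ and $x=ye$, hence $y=dye$, and the length identity $l(y)=l(d)+l(y)+l(e)$ available in the rigid UFD forces $l(d)=l(e)=0$, so $d,e$ are units and $x,y$ are associated; conversely, $x=yu$ with $u\in U(\Lambda)$ gives $x\Lambda=y\Lambda$ as right ideals, producing the required isomorphism up to shift. The one delicate point throughout is the repeated shuffling of units across nonzero elements: the homomorphism analysis naturally produces a \emph{common factor} $c$ that is simultaneously a left factor of $x$ and a right factor of $y$, whereas parts (2)--(4) are phrased in terms of one-sided divisibility by the whole of $y$ or $x$, and the normality of $U(\Lambda)\subset \Lambda^*$ (half of the rigid-UFD hypothesis) is exactly what allows the translation between these two pictures.
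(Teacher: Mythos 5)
Your proposal is correct and follows essentially the same route as the paper's proof: a graded homomorphism $\Lambda/x\Lambda\to\Lambda/y\Lambda[i]$ is analyzed through the homogeneous element $b$ representing the image of the generator, Lemma~\ref{lemma_abcd} applied to $bx\in y\Lambda$ gives the factorization in (1), injectivity (resp.\ surjectivity) is translated into one-sided associatedness and flipped to the other side via normality of $U(\Lambda)$ exactly as the paper does, and the kernel/quotient identifications come from Lemma~\ref{lemma_shortexact}. The only cosmetic deviations are your explicit computation $\{r\in\Lambda\mid br\in y\Lambda\}=c\Lambda$ in place of the paper's use of the factored form \eqref{eq_fprime} of the map, and the length count in (4) where the paper simply cites (2) and (3); neither changes the substance.
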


\begin{proof} 
(1) First, suppose $x=ca,\ y=bc$ for some $a,b,c\in \Lambda ^*$. Consider the following composition of natural maps:
\begin{equation}
\label{eq_fprime}
\xymatrix{
\Lambda/x\Lambda = 
\Lambda/ca\Lambda \ar@{->>}[r] &
\Lambda/c\Lambda \ar[r]^-b_-{\sim}  & 
b\Lambda/bc\Lambda[i] =
b\Lambda/y\Lambda[i] \ar@{_(->}[r] &
\Lambda/y\Lambda[i]
}
\end{equation}
where $i=\deg b$. The composition is nonzero provided that $l(c)>0$, this proves ``if'' part. Now assume $f\colon \Lambda/x\Lambda\to \Lambda/y\Lambda[i]$ is a nonzero homomorphism. Choose $b\in \Lambda^*$ such that $f(1+x\Lambda)=b+y\Lambda$. Then necessarily $bx\in y\Lambda$, $bx=ya$ for some $a\in \Lambda^*$.
By Lemma \ref{lemma_abcd} we have either
\begin{itemize}
\item $b=yc,\ a=cx$ for some $c\in \Lambda ^*$ or

\item $y=bc,\ x=ca$ for some $a\in \Lambda ^*$.
\end{itemize}
The first case implies that the map $f$ is zero, which contradicts our assumptions. So we are in the second case. Let $f'$ be the homomorphism in \eqref{eq_fprime}. Then $f=f'$ since both maps send $1+x\Lambda$ to $b+y\Lambda$.
Since $f\ne 0$, we have $l(c)>0$. This finishes the proof of  (1). 

Part ``if'' of (2) follows from \eqref{eq_fprime} by taking $c=x$, $a=1$, $b=d$.
For ``only if'', use decomposition \eqref{eq_fprime} of an injective homomorphism and note that $ca\Lambda=c\Lambda$. Hence $a$ is a unit and $x$ is right associated to $c$. By the definition of UFD, $x$ is left associated to $c$: $c=a'x$ for some unit $a'$. Then $y=bc=(ba')x$, take $d:=ba'$. Clearly, the quotient is $\Lambda/b\Lambda[i]=\Lambda/d\Lambda[i]$.

(3) is proved similarly to (2), (4) follows from (2) and (3).
\end{proof}

\subsection{Good sets and good elements}



We have found the following notion very useful. We work here in the setup of graded rigid UFD's (see Section~\ref{section_UFD}) but the only examples we keep in mind are free graded algebras.

\begin{definition}
\label{def_goodelement}
Let $\Lambda$ be a graded rigid UFD, let $\XX\subset \Lambda^*$ be a subset. We say that $\XX$ is \emph{good} if there are no units in $\XX$ and for any $x,y\in\XX$ and any elements $a,b,c\in \Lambda^*$ such that
$$x=ab,\quad y=ca$$
either
\begin{itemize}
\item $a$ is a unit, or
\item $x=y$ and $b$ and $c$ are units.
\end{itemize}
We say that an element $x\in \Lambda^*$ is \emph{good}
if the set $\{x\}$  is good.
\end{definition}

\begin{example}
\label{example_goodbad}
Let $A=\k\{x,y,\ldots\}$ be a free algebra.
\begin{enumerate}
\item Elements  $x,xy,x^3y^4, x^2y^3xy, x^2yxy^2$ are good while elements
$x^2, x^3, xyx, xyxy, xy^2x^2y, $ are not.
\item  The sets $\{x,y\},\{xy,x^2y^2\}, \{x^2yxy^2, x^3y^2x^2y^3, x^4y^3x^3y^4\}$ are good while the sets $\{xy, yx^2\},\{x^2y^3,x^3y^2\}, \{x^2yxy^2, xy^4xy\}, \{xyx^2y^2, x^2y^2xy\}$ are not.
\end{enumerate}
\end{example}

For the future use we prove some easy lemmas.
\begin{lemma}
\label{lemma_yzy}
Let $\Lambda$ be a graded rigid UFD, let $x\in \Lambda^*$ be an element which is not good. Then there exist elements $y,z\in \Lambda^*$ with $l(y)>0$ such that $x=yzy$.
\end{lemma}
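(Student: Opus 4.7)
My plan is to repeatedly pull apart an overlapping factorization of $x$ until the two occurrences of the overlapping factor no longer collide. First I would unpack the failure of goodness for $\{x\}$ using Definition \ref{def_goodelement}, obtaining elements $a, b, c \in \Lambda^*$ with
$$x = ab = ca, \quad a \text{ not a unit}, \quad \text{and } b, c \text{ not both units};$$
the length identity $l(a) + l(b) = l(c) + l(a)$ forces $l(b) = l(c)$, which together with the last condition makes all three of $a, b, c$ non-units. (I am implicitly excluding the case in which $x$ itself is a unit, as the desired conclusion $x = yzy$ with $l(y) \geq 1$ is impossible then.)

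My next move would be to choose such a witnessing triple $(a, b, c)$ minimizing $l(a)$, and argue that this minimality forces $l(a) \leq l(c)$. Suppose for contradiction that $l(a) > l(c)$. Then Lemma \ref{lemma_abcd}, applied to $ab = ca$ in its case (1), supplies $e \in \Lambda^*$ with $a = ce = eb$ and $l(e) = l(a) - l(c) \geq 1$. Substituting into $x = ca$ and $x = ab$ produces
$$x = c^2 e = e b^2,$$
so the triple $(e, b^2, c^2)$ is again a witness to the non-goodness of $x$ with all three entries non-units and with $l(e) < l(a)$, contradicting the minimal choice of $l(a)$.

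Once $l(a) \leq l(c)$ is established, case (2) of Lemma \ref{lemma_abcd} yields $e \in \Lambda^*$ with $c = ae$ and $b = ea$, whence
$$x = ab = a(ea) = aea;$$
taking $y := a$ and $z := e$ finishes the proof, with $l(y) = l(a) \geq 1$ by construction.

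The step I expect to require the most care is the reduction in the second paragraph: verifying that the replacement triple $(e, b^2, c^2)$ genuinely witnesses non-goodness. The key observation is that the strict inequality $l(a) > l(c)$ is exactly what ensures $l(e) \geq 1$, while $l(b^2) = l(c^2) = 2 l(c) \geq 2$ comes for free from $b$ and $c$ being non-units in the original triple, so every hypothesis needed to apply the minimality of $l(a)$ to $(e, b^2, c^2)$ is in place.
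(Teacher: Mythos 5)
Your proof is correct, and it takes a genuinely different route from the paper's, while relying on the same key tool (Lemma~\ref{lemma_abcd}). The paper fixes one witnessing triple $x=ab=ca$ and peels powers of $b$ out of $a$: it takes the maximal $n$ with $a=db^n$, cancels to get $db=cd$, rules out $l(d)\ge l(b)$ by maximality, obtains $b=ed$, $c=de$, and ends with $x=d(ed)^{n+1}$, which forces a final case split on whether $d$ is a unit (handled via the substitution $y=ded$, $z=(ed)^{n-1}d^{-1}$). You instead vary the witness itself: minimizing the overlap length $l(a)$ over all witnessing triples, and showing that a witness with $l(a)>l(c)$ can be replaced by the shorter witness $(e,b^2,c^2)$ coming from $a=ce=eb$, so that the minimal witness satisfies $l(a)\le l(c)$ and Lemma~\ref{lemma_abcd}(2) gives $x=aea$ in one stroke ($z=e$ may be a unit, which is allowed since only $l(y)>0$ is required). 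This buys you a shorter endgame with no terminal case analysis and no auxiliary bookkeeping with powers of $b$, at the modest cost of verifying that $(e,b^2,c^2)$ is again a legitimate witness — which you do correctly, noting $l(e)=l(a)-l(c)\ge 1$ and that $b^2,c^2$ are non-units. Your explicit exclusion of the degenerate case where $x$ is a unit matches the paper's implicit assumption (its proof likewise asserts $l(a),l(b),l(c)>0$ from the outset), and your observation that $l(b)=l(c)$ forces all three factors to be non-units is the same normalization the paper uses.
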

\begin{proof}
By the definition, there exist elements $a,b,c\in \Lambda^*$ with $l(a),l(b),l(c)>0$ such that $x=ab=ca$. Let $n\ge 0$ be the maximal such that $a=db^n$ for some $d\in \Lambda^*$. We have $x=db^{n+1}=cdb^n$ and $db=cd$. If $l(d)\ge l(b)$ then  by Lemma~\ref{lemma_abcd} we have $d=eb$ for some $e\in \Lambda^*$, $a=eb^{n+1}$ and $n$ is not maximal, a contradiction. Hence $l(d)<l(b)$. It follows from Lemma~\ref{lemma_abcd} that $b=ed, c=de$ for some $e\in \Lambda^*$. Therefore
$$x=cdb^n=ded(ed)^n=d(ed)^{n+1}.$$
If $l(d)>0$ then we can take $y:=d$, $z=(ed)^ne$. If
$l(d)=0$ then $d$ is a unit, $l(e)=l(b)-l(d)>0$, $n\ge 1$ and we can take
$y:=ded, z:=(ed)^{n-1}d^{-1}$.
\end{proof}

\begin{lemma}
\label{lemma_monomialfaith}
Let $\XX$ be a good set in a graded rigid UFD $\Lambda$.
\begin{enumerate}
\item If two elements $a,b\in\XX$ are associated then $a=b$.
\item Let
$$m_1=x_1\ldots x_r,\quad m_2=z_1\ldots z_s, \quad x_i,z_i\in\XX$$
be two monomials in $\XX$.  Assume $m_1$ is associated with $m_2$, then $m_1=m_2$, $r=s$ and $x_i=z_i$ for all $i$.
\end{enumerate}
\end{lemma}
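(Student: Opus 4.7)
The plan is to prove (1) as a direct application of the goodness definition, and then deduce (2) by induction on $r$ using Lemma~\ref{lemma_abcd} to peel off the leftmost factors.

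For (1): if $a,b\in\XX$ are associated, say $a=bu$ for a unit $u$, apply Definition~\ref{def_goodelement} to $x:=a$, $y:=b$ with the factorizations $x=b\cdot u$ and $y=1\cdot b$. These fit the template $x=a'b'$, $y=c'a'$ with common middle element $a'=b$. Since $b\in\XX$ is not a unit, the first alternative of goodness fails, forcing $x=y$, i.e., $a=b$.

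For (2), I induct on $r$. The base case $r=0$ gives $m_1=1$, so $m_2$ is associated to a unit and is itself a unit; since each $z_j\in\XX$ has positive length in the rigid UFD, necessarily $s=0$ and $m_1=m_2$. For the inductive step assume $r\ge 1$; the symmetric length argument also rules out $s=0$, so $s\ge 1$ as well. Write the association as $m_1=m_2 u$ for a unit $u$, i.e.,
\[
x_1\cdot(x_2\cdots x_r)=z_1\cdot(z_2\cdots z_s u),
\]
and apply Lemma~\ref{lemma_abcd}. In one alternative, $x_1=z_1 e$ and $z_2\cdots z_s u=e(x_2\cdots x_r)$ for some $e\in\Lambda^*$. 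Apply goodness to $x:=x_1$, $y:=z_1$ with the factorizations $x=z_1\cdot e$ and $y=1\cdot z_1$: since $z_1\in\XX$ is not a unit, we conclude $x_1=z_1$ and that $e$ is a unit. Then $z_1 e=z_1$ in the domain $\Lambda$ forces $e=1$, so $x_2\cdots x_r=z_2\cdots z_s\cdot u$, meaning the truncated monomials are associated. The inductive hypothesis yields $r-1=s-1$ and $x_i=z_i$ for $i\ge 2$, which combined with $x_1=z_1$ completes the proof. The other alternative of Lemma~\ref{lemma_abcd} is entirely symmetric, with the roles of $x_1$ and $z_1$ exchanged.

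The step that could have been subtle --- transferring the associated relation cleanly from $(m_1,m_2)$ to the truncated pair $(x_2\cdots x_r,\, z_2\cdots z_s)$ --- works out without needing to commute units past the remaining factors via normality of $U(\Lambda)\subset\Lambda^*$, precisely because goodness is strong enough to force the linking element $e$ to equal $1$ rather than merely to be a unit.
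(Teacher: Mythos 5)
Your proof is correct and follows essentially the same route as the paper: part (1) is the same direct application of Definition~\ref{def_goodelement}, and part (2) is the paper's peeling argument via Lemma~\ref{lemma_abcd} plus goodness, which you have merely organized as a formal induction on $r$ (the paper's ``dividing by $x_1$ from the left and continuing the procedure''). Your extra care with the base case and with forcing $e=1$ in the integral domain is a faithful spelling-out of steps the paper leaves implicit, not a different method.
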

\begin{proof}
(1) We have $a=bu$ for some unit $u\in\Lambda^*$. Since $a=bu, b=1\cdot b\in\XX$, by Definition~\ref{def_goodelement} we get either that $b$ is a unit (contradiction to Definition~\ref{def_goodelement}) or that $a=b$.

(2) We have
\begin{equation}
\label{eq_xzu}
x_1\ldots x_r=z_1\ldots z_su
\end{equation}
for some unit $u\in\Lambda^*$.
It follows from Lemma~\ref{lemma_abcd} that either $x_1=z_1y$ or $z_1=x_1y$ for some $y\in\Lambda^*$. Since $x_1,z_1$ are not units,  Definition~\ref{def_goodelement} implies that  $x_1=z_1$.
Dividing \eqref{eq_xzu} by $x_1$ from the left, we continue the procedure and get that $r=s$, $x_i=z_i$ for all $i$ and hence $u=1$. Thus $m_1=m_2$.
\end{proof}

\begin{lemma}
\label{lemma_goodx}
Let $\Lambda$ be a graded rigid UFD and $\XX\subset \Lambda^*$ be a good subset. Let $x,y\in \XX$.
Assume  $ax\in y\Lambda$ for some $a\in \Lambda^*$. Then either
\begin{enumerate}
\item $a$ is a unit and $x=y$, or
\item $a\in y\Lambda$.
\end{enumerate}
\end{lemma}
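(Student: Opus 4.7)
The plan is to unwind the hypothesis $ax\in y\Lambda$ as an equality and apply Lemma~\ref{lemma_abcd} in the rigid UFD $\Lambda$, after which goodness of $\XX$ finishes the job. Write $ax=yb$ for some $b\in\Lambda^*$. Lemma~\ref{lemma_abcd} yields two cases: either
\begin{enumerate}
\item[(i)] $a=yc$ and $x=cb$ for some $c\in\Lambda^*$, or
\item[(ii)] $y=ac$ and $b=cx$ for some $c\in\Lambda^*$.
\end{enumerate}

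In case (i), $a=yc\in y\Lambda$ and we are done with option (2) of the statement. So the real work is in case (ii). There we have the simultaneous equations $x=cb$ (obtained by cancelling $a$ on the left from $ax=acb$, using that $\Lambda$ is a domain) and $y=ac$. Now apply the defining condition of goodness (Definition~\ref{def_goodelement}) to the pair $x,y\in\XX$ with ``middle element'' $c$: from $x=cb$ and $y=ac$ either $c$ is a unit, or $x=y$ with $a,b$ units.

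If $c$ is a unit, then $a=yc^{-1}\in y\Lambda$, which is option (2). If instead $c$ is not a unit, goodness forces $a$ to be a unit and $x=y$, which is option (1). This exhausts all possibilities, so the lemma is proved. The only subtle point — and the step I would double-check — is the correct identification of variables when invoking Definition~\ref{def_goodelement}, because the letters $a,b,c$ in that definition play different roles than the $a,b,c$ arising from Lemma~\ref{lemma_abcd}; once the bookkeeping is straight, no further work is required.
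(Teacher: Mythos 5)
Your proof is correct and takes essentially the same route as the paper: both apply Lemma~\ref{lemma_abcd} to the equation $ax=yb$ and then finish with goodness, the only difference being that the paper distinguishes the two alternatives by comparing $l(a)$ with $l(y)$, whereas you use the plain dichotomy and absorb the "unit $c$" subcase into option (2). One bookkeeping slip: your displayed cases have the second equations swapped (case (i) should read $a=yc$, $b=cx$ and case (ii) should read $y=ac$, $x=cb$), but this is harmless because in (i) you only use $a=yc\in y\Lambda$ and in (ii) you correctly re-derive $x=cb$ from $y=ac$ by left cancellation in the domain $\Lambda$.
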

\begin{proof}
We have $ax=yb$ for some $b\in \Lambda^*$.

If $l(a)< l(y)$ then by Lemma~\ref{lemma_abcd} we have $y=az$, $x=zb$  for some $z\in \Lambda^*$. Since $\XX$ is good, we get either that $z$ is a unit (this is impossible since $l(z)=l(y)-l(a)>0$) or $x=y$ and both $a$ and $b$ are units (this is alternative (1)).

If $l(a)\ge l(y)$ then by Lemma~\ref{lemma_abcd} we get $a=yc$ for some $c\in \Lambda^*$, hence $a\in y\Lambda$ and alternative (2) holds.
\end{proof}

\medskip
Let $\Lambda$ be a graded $\k$-algebra.
Recall that for graded $\Lambda$-modules $M$ and $N$ we denote
$$\Hom_{\grmod \Lambda}^{\bul}(M,N):=\oplus_i \Hom_{\grmod \Lambda}(M,N[i]).$$

We finish this section with an easy but important calculation for free algebras.

\begin{lemma}
\label{lemma_AxAAxA}
Let $A$ be a free graded $\k$-algebra.
Let $\XX\subset A^*$ be a good subset and $x,y\in \XX$. Then
\begin{align*}
\Hom_{\grmod A}(A/xA,A/yA[i])&=\begin{cases} \k, & x=y, i=0;\\
0, & \text{otherwise}. \end{cases}, \\
\Hom_{\grmod A}^{\bul}(A/xA,A/yA)&=\begin{cases} \k, & x=y;\\
0, & \text{otherwise}. \end{cases}
\end{align*}
\end{lemma}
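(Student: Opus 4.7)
The plan is to describe any homogeneous homomorphism $f \colon A/xA \to A/yA[i]$ explicitly via the image of the cyclic generator, and then reduce to a factorization problem that the goodness of $\XX$ controls. Set $f(1+xA) = b + yA$ for some homogeneous $b \in A$ of degree $i$. The relation $f(x + xA) = 0$ translates into $bx \in yA$, i.e.\ $bx = ya$ for some $a \in A$. I want to show that if $f \neq 0$ then $x = y$, $i = 0$, and $b \in \k^*$.

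Assume $f \neq 0$, so $b \notin yA$; in particular $b \neq 0$, and since $A$ has no zero divisors and $x \neq 0$, also $a \neq 0$. I would then apply Lemma~\ref{lemma_abcd} to the equality $bx = ya$ in $A^*$: it gives either (i)~$b = yc$ and $a = cx$ for some $c \in A^*$, or (ii)~$y = bc$ and $x = ca$ for some $c \in A^*$. Case (i) is immediately incompatible with $f \neq 0$, since it forces $b \in yA$. So case (ii) must hold, giving a pair of factorizations $x = ca$ and $y = bc$.

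Now the hypothesis that $\XX$ is good applies directly to $x,y \in \XX$ via Definition~\ref{def_goodelement} with those two factorizations: either $c$ is a unit, or $x = y$ and $a, b$ are units. If $c$ were a unit, then $y = bc$ gives $b = yc^{-1} \in yA$, again contradicting $f \neq 0$. Hence $x = y$ and in particular $b$ is a unit in $A$. Since $A$ is a free graded algebra with generators in degree~$1$, one has $A_0 = \k$ and the group of units $U(A)$ equals $\k^*$; so $b \in \k^*$, forcing $\deg b = i = 0$ and showing that $f$ is multiplication by the scalar $b$. Conversely, for $x = y$ and $i = 0$ any scalar clearly defines a homomorphism $A/xA \to A/yA$, yielding $\Hom_{\grmod A}(A/xA, A/yA[i]) = \k$ when $x = y$ and $i = 0$, and $0$ otherwise.

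The second formula follows by summing the first over $i \in \Z$. The main obstacle I expect is organizing the case analysis so that both cases of Lemma~\ref{lemma_abcd} and both alternatives of the goodness definition are disposed of cleanly, and in particular making sure that the subcase in which the ``connecting'' element $c$ is a unit really is ruled out by pushing $b$ back into $yA$.
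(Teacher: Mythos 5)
Your argument is correct and is essentially the paper's proof: the paper likewise writes $f$ as left multiplication by a homogeneous representative with $bx\in yA$ and then invokes Lemma~\ref{lemma_goodx}, whose proof is precisely your combination of Lemma~\ref{lemma_abcd} with Definition~\ref{def_goodelement}. The only cosmetic difference is that the paper rules out the residual ``unit'' subcase by comparing lengths, whereas you push $b$ back into $yA$ and contradict $f\neq 0$; both dispositions are valid.
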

\begin{proof}
Recall that $A$ is a graded rigid UFD. Also, an element $a\in A^*$ is a unit iff $\deg a=0$ and iff $a\in\k^*$.

Let $f\in \Hom_{\grmod A}(A/xA,A/yA[i])$. Then as in the proof of Proposition \ref{prop_modulesfromfactorizations} $f$ is the left multiplication by a homogeneous element $a\in A_i$ of degree $i$ such that $ax\in yA$. By Lemma~\ref{lemma_goodx}, either $x=y$ and $a$ is a unit (then $i=0$, $a\in\k^*$ and $f$ is a scalar endomorphism) or $a\in yA$ and $f=0$.
\end{proof}

\subsection{$\XX$-filtrations}

For a set $\XX$ of elements in an algebra $\Lambda$, we  introduce the notion of $\XX$-filtration on a $\Lambda$-module.
For a free graded algebra $A$ we relate $\XX$-filtrations of a module with its generation by modules of the form $A/xA$, $x\in\XX$.

\begin{definition}
\label{def_filt}
Let $\Lambda$ be a graded algebra and  $\XX\subset \Lambda^*$ be a subset. Let us say that a graded $\Lambda$-module $M$ is \emph{$\XX$-filtered} if there exists a finite filtration
$$0=F_0M\subset F_1M\subset F_2M\subset \ldots \subset F_nM=M$$
by graded submodules such that any quotient $F_mM/F_{m-1}M$ is isomorphic to some shift of the module $\Lambda/x\Lambda$ for some $x\in \XX$.
\end{definition}

\begin{prop}
\label{prop_xprod}
Let $\Lambda$ be a graded rigid UFD (for example, a free graded algebra).
Let $\XX\subset \Lambda^*$ be a family and $x\in \Lambda^*$ be an element. Then
$\Lambda/x\Lambda$ is $\XX$-filtered if and only if
\begin{equation}
\label{eq_xprod}
x=\lambda\cdot \prod_{i=1}^n y_i\quad\text{for some}\quad \lambda\in U(\Lambda), \quad y_i\in\XX \quad \text{and}\quad n\ge 0.
\end{equation}
\end{prop}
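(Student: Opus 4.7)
The statement has two directions; both will be proved by induction on a suitable length.

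For the ``if'' direction, assume $x = \lambda \cdot y_1 \cdots y_n$ with $\lambda \in U(\Lambda)$ and $y_i \in \XX$. Since $U(\Lambda)\subset\Lambda^*$ is normal (part of the definition of rigid UFD), we can push $\lambda$ across the product $y_1\cdots y_n$, so $x\Lambda = (y_1\cdots y_n)\Lambda$ as right ideals and we may replace $x$ by $y_1\cdots y_n$ for the rest of the argument. Apply Lemma~\ref{lemma_shortexact} with $a = y_1$ and $b = y_2\cdots y_n$ to obtain the short exact sequence
$$0 \to \Lambda/(y_2\cdots y_n)\Lambda[-\deg y_1] \to \Lambda/x\Lambda \to \Lambda/y_1\Lambda \to 0.$$
The submodule on the left is $\XX$-filtered by the induction hypothesis on $n$; appending $\Lambda/y_1\Lambda$ on top produces the required $\XX$-filtration of $\Lambda/x\Lambda$. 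The base case $n=0$ gives $x\in U(\Lambda)$, so $\Lambda/x\Lambda = 0$, which is $\XX$-filtered vacuously.

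For the ``only if'' direction, let $0 = F_0 \subset F_1 \subset \cdots \subset F_n = \Lambda/x\Lambda$ be an $\XX$-filtration with $F_m/F_{m-1} \cong \Lambda/a_m\Lambda[j_m]$ and $a_m\in\XX$; induct on $n$. If $n=0$ then $\Lambda/x\Lambda=0$, i.e.\ $x\Lambda=\Lambda$; since $\Lambda$ has no zero divisors, $x$ is a two-sided unit and the empty product works. For $n\ge 1$, consider the embedding $F_1 \cong \Lambda/a_1\Lambda[j_1] \hookrightarrow \Lambda/x\Lambda$. Shifting by $-j_1$ turns it into an embedding $\Lambda/a_1\Lambda \hookrightarrow \Lambda/x\Lambda[-j_1]$, and Proposition~\ref{prop_modulesfromfactorizations}(2) then yields a factorization $x = d\cdot a_1$ for some $d\in\Lambda^*$ together with an isomorphism $(\Lambda/x\Lambda)/F_1 \cong \Lambda/d\Lambda$ (after the shift cancels). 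The inherited filtration gives $\Lambda/d\Lambda$ an $\XX$-filtration of length $n-1$, so by the induction hypothesis $d = \mu\cdot z_1\cdots z_{n-1}$ with $\mu\in U(\Lambda)$ and $z_i\in\XX$. Hence $x = \mu\cdot z_1\cdots z_{n-1}\cdot a_1$ has exactly the required form~\eqref{eq_xprod}.

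The principal technical step is the application of Proposition~\ref{prop_modulesfromfactorizations}(2) in the ``only if'' direction: one must read off the honest right-factorization $x = d\cdot a_1$ from the purely module-theoretic inclusion $F_1 \hookrightarrow \Lambda/x\Lambda$, and use the precise identification of the quotient as $\Lambda/d\Lambda$ so that the induction can continue. The management of the unit $\lambda$ in the ``if'' direction similarly relies on the normality of $U(\Lambda)$ built into the definition of a rigid UFD; once both of these are handled the inductions proceed routinely.
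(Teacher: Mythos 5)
Your proof is correct and follows essentially the same route as the paper: the ``if'' direction via Lemma~\ref{lemma_shortexact} and induction on $n$, and the ``only if'' direction by applying Proposition~\ref{prop_modulesfromfactorizations}(2) to the bottom step $F_1\cong\Lambda/a_1\Lambda[j_1]\subset\Lambda/x\Lambda$ to extract $x=da_1$ with quotient $\Lambda/d\Lambda$ and then inducting on the filtration length. Your handling of the unit $\lambda$ and of the shifts is just a more explicit spelling-out of details the paper leaves implicit.
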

\begin{proof}
The ``if'' part follows from  Lemma \ref{lemma_shortexact}. Let us prove the ``only if'' part.

Assume that the module $\Lambda/x\Lambda$ has an  $\XX$-filtration of length $n$.
If $n=0$ then $x$ is a unit. If $n>0$, there exists $y\in \XX$ and $i$ such that $\Lambda /y\Lambda [i]$ is a submodule of $\Lambda /x\Lambda$. Then by part (2) of Proposition \ref{prop_modulesfromfactorizations} $x=dy$ for some $d\in \Lambda ^*$ and
$$(\Lambda /x\Lambda)/(\Lambda /y\Lambda[i])\cong \Lambda /d\Lambda.$$
So replacing $x$ by $d$ we can proceed by induction on $n$.
\end{proof}

Now  we concentrate on free graded algebras (generated by elements of degree $1$). Recall that a free graded algebra  is a graded rigid UFD.

\begin{lemma}
\label{lemma_xfilt}
Let $A$ be a free graded algebra.
Assume that $\XX\subset A^*$ is a good subset and the  graded $A$-module $M$ has an $\XX$-filtration of length $n$. Let $x\in \XX$ be an element. Then there exist homogeneous elements $e_1,\ldots, e_m\in M$ for some $0\le m\le n$ and an $\XX$-filtration $F_iM$ of $M$ of length $n$ such that the following holds.
\begin{enumerate}
\item For any $i$ the annihilator of $e_i$ in $A$ is $xA$. Denote $P_i=e_iA\subset M$, this submodule is isomorphic to $A/xA[d_i]$ where $d_i=-\deg e_i$.
\item The sum $\oplus_{i=1}^m P_i$ is direct.
\item For any $i=1,\ldots,m$ one has $F_iM=\oplus_{j=1}^iP_j$.
\item $\dim_{\k}\Hom_{\grmod A}^{\bul}(A/xA,M)=m$.
\end{enumerate}
\end{lemma}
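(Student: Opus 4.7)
The plan is to induct on the length $n$ of the $\XX$-filtration, with the trivial base case $n=0$ ($M=0$, take $m=0$). For the inductive step I would present the given filtration as an extension
$$0\to F_{n-1}M\to M\to A/yA[d]\to 0,\qquad y\in\XX,\ d\in\Z,$$
and apply the inductive hypothesis to $F_{n-1}M$ with its restricted filtration of length $n-1$, obtaining a new filtration $F'_\bul F_{n-1}M$ and homogeneous elements $e'_1,\ldots,e'_{m'}\in F_{n-1}M$ satisfying (1)--(4) for $F_{n-1}M$. Applying $\Hom^\bul_{\grmod A}(A/xA,-)$ to the short exact sequence and invoking Lemma~\ref{lemma_AxAAxA} (which uses goodness of $\XX$) splits the inductive step according to whether the top quotient is an $x$-piece: $\Hom^\bul_{\grmod A}(A/xA,A/yA[d])$ is $0$ if $y\ne x$ and $\k$ if $y=x$.

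In the easy cases --- either $y\ne x$, or $y=x$ but the induced map $\Hom^\bul_{\grmod A}(A/xA,M)\to\Hom^\bul_{\grmod A}(A/xA,A/xA[d])$ vanishes --- the long exact sequence yields $\Hom^\bul_{\grmod A}(A/xA,F_{n-1}M)\cong\Hom^\bul_{\grmod A}(A/xA,M)$. I would then set $F_iM:=F'_iF_{n-1}M$ for $i<n$ and $F_nM:=M$, take $m:=m'$, $e_j:=e'_j$, and all four properties transfer from the inductive hypothesis. The substantive case is $y=x$ with the induced map surjective. Here I would lift the generator of $\Hom^\bul_{\grmod A}(A/xA,A/xA[d])\cong\k$ to a graded morphism $\phi\colon A/xA\to M$, equivalently, to a homogeneous element $\phi(1)\in M$ with $\ann\phi(1)=xA$ whose projection to $M/F_{n-1}M\cong A/xA[d]$ is a nonzero scalar multiple $\lambda$ of the generator $1$. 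Then $M=F_{n-1}M\oplus\phi(1)A$: the sum is $M$ because $\phi(1)A$ surjects onto the top quotient, and the intersection is zero because $\phi(1)a\in F_{n-1}M$ forces $\lambda a=0$ in $A/xA[d]$ with $\lambda\ne 0$, hence $a\in xA$ and $\phi(1)a=0$. I then insert $\phi(1)A$ just above the $x$-part, setting $F_iM:=F'_iF_{n-1}M$ for $i\le m'$, $F_{m'+1}M:=F'_{m'}F_{n-1}M\oplus\phi(1)A$, and $F_iM:=F'_{i-1}F_{n-1}M\oplus\phi(1)A$ for $m'+1<i\le n$, taking $m:=m'+1$ and $e_m:=\phi(1)$. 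The new subquotients remain shifts of $A/zA$ with $z\in\XX$, and~(4) follows from the long exact sequence, since $\dim\Hom^\bul_{\grmod A}(A/xA,M)=m'+1=m$.

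The one genuinely delicate step is the splitting in the surjective subcase: producing an $x$-generator of $M$ whose cyclic submodule is a direct complement of $F_{n-1}M$. This works because Lemma~\ref{lemma_AxAAxA}, itself built on the goodness of $\XX$, reduces every graded endomorphism of $A/xA$ (up to shift) to a scalar. Consequently, a lift whose image in the top quotient $A/xA[d]$ is a nonzero scalar necessarily gives an injection $A/xA\hookrightarrow M$ that splits the projection $M\to M/F_{n-1}M$. Everything else --- reshuffling the filtration, identifying the subquotients, and the dimension count --- is routine bookkeeping.
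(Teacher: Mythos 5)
Your proof is correct and follows essentially the same route as the paper: induction on the filtration length, applying $\Hom^{\bul}_{\grmod A}(A/xA,-)$ to the top extension $0\to F_{n-1}M\to M\to A/yA[d]\to 0$, and using Lemma~\ref{lemma_AxAAxA} to split the sequence when the restriction map is nonzero. The only differences are cosmetic: you insert the new $x$-piece at position $m'+1$ instead of at the bottom of the filtration as the paper does, and you spell out the splitting argument that the paper merely asserts.
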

\begin{proof}
We argue by induction in $n$. The case $n=0$ is trivial. For $n\ge 1$, assume $M$ has an $\XX$-filtration of length $n$. Then there exists an exact sequence
\begin{equation}
\label{eq_MM}
0\to M'\to M\to A/yA[d]\to 0
\end{equation}
for some $y\in\XX$, $d\in\Z$ and some graded $A$-module $M'$ admitting an $\XX$-filtration of length $n-1$.
By the induction assumption for some $0\le m\le n-1$ we can choose elements $e'_1,\ldots,e'_m\in M'$ and an $\XX$-filtration
$F_iM'$ of $M'$ satisfying properties (1)-(4).
Consider the exact sequence associated with \eqref{eq_MM}
$$0\to \Hom_{\grmod A}^{\bul}(A/xA,M')\to \Hom_{\grmod A}^{\bul}(A/xA,M)\xra{\beta}
\Hom_{\grmod A}^{\bul}(A/xA,A/yA[d]).$$

If $\beta=0$ then $\dim_{\k}\Hom_{\grmod A}^{\bul}(A/xA,M)=\dim_{\k}\Hom_{\grmod A}^{\bul}(A/xA,M')=m$ and we can take $e'_i=e_i$  and $F_iM=F_iM'$ for $1\le i\le n-1$, $F_nM=M$. Clearly properties (1)-(4) hold.

Suppose $\beta\ne 0$. It follows then from  Lemma~\ref{lemma_AxAAxA} that  $\Hom_{\grmod A}^{\bul}(A/xA,A/yA[d])=\k$, $x=y$ and $\beta$ is surjective.
Hence the sequence \eqref{eq_MM} splits and $M\cong M'\oplus A/xA[d]$. Then $\dim_{\k}\Hom_{\grmod A}^{\bul}(A/xA,M)=\dim_{\k}\Hom_{\grmod A}^{\bul}(A/xA,M')+1=m+1$. We can take $e_1$ to be the generator of
$A/xA[d]$, $e_{i}=e'_{i-1}$ for $i=2,\ldots,m+1$. Also we take  $F_{i+1}M=A/xA[d]\oplus F_iM'$ for $i=0,\ldots,n-1$. Again, properties (1)-(4) hold.
\end{proof}

\begin{lemma}
\label{lemma_xfilt2} Let $A$ be a free graded algebra.
Assume $\XX\subset A^*$ is a good subset, $x\in \XX$ and a  graded $A$-module $M$ has an $\XX$-filtration of length $n$. Let $f\colon A/xA[d]\to M$ be a homomorphism of graded $A$-modules, $f\ne 0$. Then $f$ is injective and there exists an $\XX$-filtration $F'_iM$ of $M$ of length $n$ such that
$F'_1M=\im f$.
\end{lemma}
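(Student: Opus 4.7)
My plan is to induct on the filtration length $n$. The base case $n=0$ is vacuous, since then $M=0$ would force $f=0$. For $n\ge 1$, fix an $\XX$-filtration of $M$ of length $n$ and consider the short exact sequence
$$0\to M'\to M\xra{\pi} A/yA[e]\to 0$$
with $M'=F_{n-1}M$ and $y\in\XX$; here $M'$ inherits an $\XX$-filtration of length $n-1$. I would split the analysis according to whether the composition $g:=\pi\circ f\colon A/xA[d]\to A/yA[e]$ vanishes.

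If $g=0$, then $f$ factors through the inclusion $M'\hookrightarrow M$ as $f=\iota\circ f'$ for some nonzero $f'\colon A/xA[d]\to M'$. By the inductive hypothesis, $f'$ is injective (so $f$ is too) and there is an $\XX$-filtration $F'_iM'$ of $M'$ of length $n-1$ with $F'_1M'=\im f'$. Setting $F'_iM:=F'_iM'$ for $0\le i\le n-1$ and $F'_nM:=M$ yields the desired length-$n$ filtration of $M$, the new top quotient being $M/M'\cong A/yA[e]$.

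If $g\ne 0$, then $\Hom_{\grmod A}(A/xA,A/yA[e-d])\ne 0$, and Lemma~\ref{lemma_AxAAxA} forces $x=y$, $e=d$, and $g$ to be a nonzero scalar, hence an isomorphism. Consequently $f$ splits the sequence and yields a decomposition $M\cong \im f\oplus M'$ with $\im f\cong A/xA[d]$; in particular $f$ is injective. Fixing any $\XX$-filtration $F_iM'$ of $M'$ of length $n-1$ and defining $F'_0M:=0$, $F'_1M:=\im f$, and $F'_iM:=\im f\oplus F_{i-1}M'$ for $i\ge 2$ then produces the required $\XX$-filtration of $M$ with $F'_1M=\im f$.

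The main obstacle is the splitting step in the second case: one must upgrade the bare nonvanishing of $\pi\circ f$ to the much stronger conclusion that this composition is an isomorphism. This is precisely where goodness of $\XX$ enters, via Lemma~\ref{lemma_AxAAxA}, which collapses every Hom space between cyclic modules $A/xA$ and $A/yA[e-d]$ with $x,y\in\XX$ to either $\k$ (when $x=y$ and $e=d$) or zero, leaving no room for homomorphisms that are nonzero but not invertible.
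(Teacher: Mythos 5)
Your argument is correct, and it reaches the conclusion by a route that differs from the paper's. The paper first establishes the auxiliary Lemma~\ref{lemma_xfilt}, which produces (by an induction very much like yours) an $\XX$-filtration whose first $m$ steps form a direct sum of shifted copies of $A/xA$, together with the dimension count $\dim_{\k}\Hom^{\bul}_{\grmod A}(A/xA,M)=m$; the proof of Lemma~\ref{lemma_xfilt2} then writes $e=f(1)$ as a $\k$-linear combination of the distinguished generators $e_1,\ldots,e_m$ and performs a change of basis so that $eA=\im f$ becomes the first step of the filtration. You instead run the induction directly on the length $n$ with the specific map $f$ in hand, splitting along the top quotient $\pi\colon M\to A/yA[e]$ and distinguishing the cases $\pi\circ f=0$ (factor through $F_{n-1}M$ and append the top quotient) and $\pi\circ f\ne 0$ (where Lemma~\ref{lemma_AxAAxA} forces $x=y$, $e=d$, and, since the Hom space is one-dimensional and contains the identity, $\pi\circ f$ is a nonzero scalar, hence an isomorphism, so the sequence splits off $\im f$ as a direct summand). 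Both proofs ultimately rest on the same key computation, Lemma~\ref{lemma_AxAAxA}, but your version is more economical: it makes Lemma~\ref{lemma_xfilt} unnecessary for this statement and avoids the basis-change bookkeeping. What the paper's route buys in exchange is the extra structural information of Lemma~\ref{lemma_xfilt} (the $x$-annihilated homogeneous elements of an $\XX$-filtered module are exactly the span of the bottom $A/xA$-generators, with the matching dimension count), which is what makes the replace-one-generator argument work there.
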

\begin{proof}
Choose an $\XX$-filtration $F_iM$ and elements $e_1,\ldots,e_m$ as in Lemma~\ref{lemma_xfilt}. Obviously,
$$\Hom_{\grmod A}^{\bul}(A/xA[d],M)=\oplus_i \{w\in M_i\mid wx=0\}.$$
We claim that
\begin{equation}
\label{eq_eee}
\oplus_i \{w\in M_i\mid wx=0\}=\langle e_1,\ldots, e_m\rangle_{\k},
\end{equation}
where $\langle \ldots \rangle_{\k}$ denotes the $\k$-linear span. Indeed, any $e_i$ belongs to the l.h.s. of \eqref{eq_eee} by Lemma~\ref{lemma_xfilt}(1).  On the other hand, the vectors $e_1,\ldots,e_m$ are linearly independent by Lemma~\ref{lemma_xfilt}(2), hence the dimension of both sides of \eqref{eq_eee} is $m$ (see Lemma~\ref{lemma_xfilt}(4)) and the equality holds.

Now let $e\ne 0$ be the image of $1\in A/xA[d]$ under $f$. Clearly, $ex=0$, hence $e\in e_1,\ldots, e_m\rangle_{\k}$. Write $e=\sum_{l=1}^{m_0} c_le_{i_l}$ where $e_{i_1},\ldots,e_{i_{m_0}}$ are homogeneous of degree $-d$, $0\ne c_l\in \k$.
Renumbering $e_1,\ldots,e_m$, we get a new sequence $e'_1,\ldots,e'_m$ such that $e'_1=e_{i_1},\ldots,e'_{m_0}=e_{i_{m_0}}$. Now we can replace $e_1$ by $e$. Put $P_1=eA, P_i=e'_iA$ for $i=2,\ldots,m$. From Lemma~\ref{lemma_xfilt}(2) it follows that the sum $\oplus_{i=i}^m P_i$ is direct. Put
$$F'_iM=\oplus_{j=1}^iP_j\quad\text{for}\quad 1\le i\le m, \qquad
F'_iM=F_iM\quad\text{for}\quad m\le i\le n,$$
one checks that $F'_iM$ is also an $\XX$-filtration. In particular, $f$ gives an isomorphism $A/xA[d]\to eA=P_1=F'_1M\subset M$.
\end{proof}

\subsection{$\XX$-filtrations and generation}

\begin{lemma}
\label{lemma_Gdiag}
Suppose $\XX\subset A^*$ is a good family of elements of a free graded algebra $A$. Then a graded $A$-module $M$ belongs to the subcategory $\langle A/xA\rangle_{x\in\XX}\subset \Perf A$ if and only if $M$ is $\XX$-filtered.
\end{lemma}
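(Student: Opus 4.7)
For the forward direction I would induct on the filtration length $n$: the short exact sequence $0\to F_{n-1}M\to F_nM\to F_nM/F_{n-1}M\to 0$ yields a triangle in $D(A)$, and since $F_nM/F_{n-1}M$ is a shift of $A/xA$ for some $x\in\XX$, the inductive hypothesis places $F_nM$ into $\langle A/xA\rangle_{x\in\XX}$.

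The reverse direction is the substantive one. My strategy is to show that the full subcategory $\FF\subset\Perf A$ of objects whose cohomology is $\XX$-filtered in every cohomological degree is thick triangulated and contains each $A/xA$. Since a graded module has only one nonzero cohomology (namely itself), this will suffice. Closure of $\FF$ under shifts and direct sums is immediate: shifts permute cohomological degrees and sums concatenate filtrations within each degree. The essential content lies in closure under triangles and direct summands, both of which I would reduce to a single sub-lemma: for any graded module homomorphism $f\colon M\to N$ between $\XX$-filtered modules, the modules $\ker f$, $\im f$, and $\coker f$ are all $\XX$-filtered.

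I would establish the sub-lemma by induction on the filtration length of $M$ using Lemma~\ref{lemma_xfilt2} as the main tool. If the restriction of $f$ to the first filtration piece $F_1M\cong A/x_1A[d_1]$ is nonzero, then Lemma~\ref{lemma_xfilt2} forces it to be injective with image forming the first step of a new $\XX$-filtration of $N$, allowing one to pass to the quotients $M/F_1M$ and $N/\im(f|_{F_1M})$; if zero, then $f$ factors through $M/F_1M$ and the induction applies directly. The cokernel case additionally requires an auxiliary refinement statement: any $\XX$-filtered submodule of an $\XX$-filtered module occurs as a term in some $\XX$-filtration of the ambient module, again proved by induction via Lemma~\ref{lemma_xfilt2}.

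Given the sub-lemma, closure of $\FF$ under triangles follows from splitting the long exact cohomology sequence of a triangle into short exact sequences whose outer terms are kernels and cokernels of the connecting maps between $\XX$-filtered modules; closure under direct summands is proved by a parallel induction in which, given $M=M_1\oplus M_2$ with $M$ $\XX$-filtered, Lemma~\ref{lemma_xfilt2} is applied to the composition $F_1M\hookrightarrow M\twoheadrightarrow M_1$ (when nonzero) to produce a new $\XX$-filtration of $M$ whose first term lies in $M_1$, reducing to shorter filtration length. I expect the main obstacle to be organizing these nested inductions and case analyses cleanly; the goodness hypothesis on $\XX$ enters only through Lemma~\ref{lemma_xfilt2}, which in turn rests on the $\Hom$ computation of Lemma~\ref{lemma_AxAAxA}.
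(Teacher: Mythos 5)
Your proposal is correct, but it is organized differently from the paper's proof, so a comparison is worth recording. The paper argues in two stages: first it treats objects of $[A/xA]_{x\in\XX}$ by induction on the number of generation steps, analyzing the cohomology long exact sequence of the cone of a map $A/xA[d]\to M'$ and feeding the nonzero case into Lemma~\ref{lemma_xfilt2}; then a separate induction (again via Lemma~\ref{lemma_xfilt2}) shows that a graded direct summand of an $\XX$-filtered module is $\XX$-filtered. You instead prove a stronger intermediate statement --- kernels, images and cokernels of arbitrary graded-module maps between $\XX$-filtered modules are $\XX$-filtered --- and then package the conclusion as: the objects of $\Perf A$ with $\XX$-filtered cohomology form a thick triangulated subcategory containing all $A/xA$, $x\in\XX$. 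Your sub-lemma does go through by the induction you describe (when $f|_{F_1M}\neq 0$, Lemma~\ref{lemma_xfilt2} applied to $f|_{F_1M}\colon A/x_1A[d_1]\to N$ yields a filtration of $N$ starting at $\im(f|_{F_1M})$, and then $\ker f\cong\ker\bar f$, $\im f$ is an extension of $\im\bar f$ by that first step, and $\coker f\cong\coker\bar f$ --- so in fact the auxiliary refinement statement you invoke for cokernels is not needed, though it is true and provable the same way). What each route buys: the paper only ever needs the special case where one side of the map is a single cyclic module, i.e.\ Lemma~\ref{lemma_xfilt2} itself, so its case analysis is lighter; your route carries a heavier sub-lemma with nested inductions, but in exchange closure under triangles becomes automatic (the long exact sequence splits into short exact sequences whose outer terms are a cokernel and a kernel of the map $H(M_1)\to H(M_2)$ between the two known filtered cohomologies) and you never have to reason about the inductive structure of $[\GG]_n$. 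Your direct-summand step is essentially the paper's: just note that Lemma~\ref{lemma_xfilt2} must be applied to the composite $A/xA[d]\cong F_1M\to M_1\hookrightarrow M$ regarded as a map into the filtered module $M$ (not into $M_1$, which is not yet known to be filtered), and that when the component into $M_1$ vanishes one has $F_1M\subset M_2$ and passes directly to $M/F_1M$. Finally, a cosmetic point: in this setting $H(M)$ is a single graded $A$-module, so ``$\XX$-filtered in every cohomological degree'' should simply read ``$H(M)$ is $\XX$-filtered''; with that reading all your steps are consistent, and the goodness hypothesis enters exactly as in the paper, through Lemma~\ref{lemma_AxAAxA} via Lemma~\ref{lemma_xfilt2}.
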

\begin{proof}
Part ``if'' is more or less obvious. We prove the ``only if'' part.

First, we assume that $M\in [A/xA]_{x\in\XX}$. Then we can argue by induction in the number $n$ of copies of shifted modules $A/xA, x\in\XX$ that the module $M$ is built from. For the induction step, assume $M$ is quasi-isomorphic the cone $C$ of a morphism $f\colon A/xA[d]\to  M'$ in $\Perf A$, where  $x\in\XX$, $d\in\Z$ and the object $M'\in\Perf A$ is built from $n-1$ copies   of shifted modules of the form $A/yA, y\in\XX$. By Proposition~\ref{prop_module}, we can assume that $M'$ is a graded module and by the induction assumption $M'$ is $\XX$-filtered. Consider the long exact sequence in cohomology
$$A/xA[d] \xra{H(f)} M'\xra{\delta} H(C)\to A/xA[d+1]\xra{H(f)} M'[1].$$
Clearly, the graded modules $M=H(M)$ and $H(C)$ are isomorphic.

Suppose $H(f)=0$, then we get an exact sequence
$$0\to M'\to H(C)\to A/xA[d+1]\to 0,$$
it follows that the graded $A$-module $H(C)$ is $\XX$-filtered.

Suppose $H(f)\ne 0$. Then by Lemma~\ref{lemma_xfilt2} $H(f)$ is injective and there is an $\XX$-filtration $F_iM'$ of $M'$ such that $\im H(f)=F_1M'$.
There is an exact sequence
$$0\to A/xA[d]\xra{H(f)} M' \xra{\delta} H(C)\to 0,$$
hence $\delta$ induces an $\XX$-filtration on $H(C)$.

Now we consider the general case when $M\in \langle A/xA\rangle_{x\in\XX}$. It means that $M$ is a direct summand of some object in $[A/xA]_{x\in\XX}$. By the above special case and Proposition~\ref{prop_module}, we can assume that $M$ is a direct summand of some graded $\XX$-filtered  $A$-module $N$.

We prove by induction in $n$ that for a graded  $A$-module $N=M\oplus M'$ with an $\XX$-filtration of length $n$ the modules $M$ and $M'$ are also $\XX$-filtered.
The base $n=0$ is trivial.

Now assume that $N=M\oplus M'$ has an $\XX$-filtration $F_iN$ of length $n\ge 1$. We have $F_1N\cong A/xA[d]$ for some $x\in\XX$, $d\in \Z$. Hence $\Hom_{\grmod A}(A/xA[d],N)\ne 0$. Further, we have
\begin{equation}
\label{eq_NMM}
\Hom_{\grmod A}(A/xA[d],N)\cong \Hom_{\grmod A}(A/xA[d],M)\oplus \Hom_{\grmod A}(A/xA[d],M').
\end{equation}
It follows that at least one summand in the r.h.s. of \eqref{eq_NMM} is nonzero, we can assume that $\Hom_{\grmod A}(A/xA[d],M)\ne 0$. Choose a nonzero homomorphism $f\colon A/xA[d]\to M$ of graded modules.
Then $f\colon A/xA[d]\to N$ is also nonzero. By Lemma~\ref{lemma_xfilt2}, there exists an $\XX$-filtration $F'_iN$ of length $n$ such that $F'_1N=\im f\cong A/xA[d]$. It follows that $F'_1N\subset M$. Passing to the quotients, we have $N/F'_1N=(M/F'_1N)\oplus M'$ and $N/F'_1N$ has an $\XX$-filtration of length $n-1$. By the induction assumption, we get that  $M/F'_1N$ and $M'$ are $\XX$-filtered. Since $F'_1N\cong A/xA[d]$, the  module $M$ is also $\XX$-filtered.
\end{proof}

\begin{lemma}
\label{lemma_Ggivesx}
Let $x\in A$ be a good element of a free graded algebra and $M$ be a nonzero graded $\{x\}$-filtered $A$-module. Then $A/xA\in\langle M\rangle$.
\end{lemma}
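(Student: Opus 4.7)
I will argue by induction on the length $n$ of an $\{x\}$-filtration of $M$. The base case $n=1$ is immediate: if $M \cong A/xA[d]$ then $A/xA \cong M[-d] \in \langle M\rangle$, since $\langle M\rangle$ is closed under shifts.

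For the inductive step with $n \geq 2$, I apply Lemma~\ref{lemma_xfilt} to the good singleton $\{x\}$ and $M$, obtaining for some $m$ with $1 \leq m \leq n$ (and $m \geq 1$ since $M \neq 0$) homogeneous elements $e_1,\ldots,e_m \in M$ with annihilator $xA$ and an $\{x\}$-filtration $F_\bullet M$ of $M$ of length $n$ such that $F_m M = \bigoplus_{i=1}^m e_iA \cong \bigoplus_{i=1}^m A/xA[d_i]$; moreover $m = \dim_\k \Hom^\bullet_{\grmod A}(A/xA, M)$. If $m = n$, then $M = F_n M$ itself is a finite direct sum of shifts of $A/xA$, so $A/xA$ is a direct summand of $M$ and $A/xA \in \langle M\rangle$.

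If $m < n$, I consider the top short exact sequence $0 \to F_{n-1}M \to M \to A/xA[d_n] \to 0$ and the associated long exact sequence obtained by applying $\Hom^\bullet_{\grmod A}(A/xA, -)$. The map $\beta\colon \Hom^\bullet_{\grmod A}(A/xA, M) \to \Hom^\bullet_{\grmod A}(A/xA, A/xA[d_n])$ has one-dimensional target by Lemma~\ref{lemma_AxAAxA}, so $\beta$ is either surjective or zero. If $\beta$ is surjective, then, exactly as in the proof of Lemma~\ref{lemma_xfilt}, a preimage of the identity $A/xA[d_n] \to A/xA[d_n]$ splits the sequence, exhibiting $A/xA[d_n]$ as a direct summand of $M$, whence $A/xA \in \langle M\rangle$.

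The only remaining case is $\beta = 0$, and this is the main technical obstacle. In this case the inclusion $F_{n-1}M \hookrightarrow M$ induces an isomorphism on $\Hom^\bullet_{\grmod A}(A/xA, -)$, so $F_{n-1}M$ is a nonzero $\{x\}$-filtered module of length $n-1$ and the inductive hypothesis gives $A/xA \in \langle F_{n-1}M\rangle$. Bootstrapping this into $A/xA \in \langle M\rangle$ is accomplished by constructing, using Lemmas~\ref{lemma_xfilt} and~\ref{lemma_xfilt2} together with the non-trivial extension data of the sequence, an explicit morphism $\phi\colon M \to M[k]$ in $\Perf A$ whose kernel and cokernel in the abelian category of graded $A$-modules are finite direct sums of shifts of $A/xA$; since $A$ is hereditary (so any object of $\Perf A$ splits as a sum of its cohomology by Proposition~\ref{prop_module}), the cone of $\phi$ then decomposes as a direct sum of shifts of $A/xA$, realizing $A/xA$ as a direct summand of an object of $\langle M\rangle$. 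The prototypical instance is $M = A/x^2 A$, a non-split extension of $A/xA$ by $A/xA[-1]$: left multiplication by $x$ is $A$-linear (because $M$ is cyclic) and gives $\phi\colon M[-1] \to M$ whose kernel and cokernel as graded modules are $(A/xA)[-2]$ and $A/xA$ respectively, so its cone in $\Perf A$ is $A/xA \oplus A/xA[-1]$, from which $A/xA \in \langle A/x^2A\rangle$.
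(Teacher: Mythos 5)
Your base case, the split case ($\beta$ surjective), and the case $m=n$ are all fine, but the heart of the lemma is exactly the remaining case $\beta=0$ (a non-split top extension), and there your argument has a genuine gap. You assert that one can construct ``using Lemmas~\ref{lemma_xfilt} and~\ref{lemma_xfilt2} together with the non-trivial extension data'' a morphism $\phi\colon M\to M[k]$ whose kernel and cokernel are finite direct sums of shifts of $A/xA$, but no such construction is given: the only instance you exhibit is $M=A/x^2A$, where ``left multiplication by $x$'' happens to make sense because $M$ is cyclic. A general $\{x\}$-filtered module is not cyclic and carries no evident multiplication-by-$x$ endomorphism; the existence of a $\phi$ with \emph{both} kernel and cokernel semisimple (i.e.\ sums of shifts of $A/xA$) would essentially require a structure theorem for $\{x\}$-filtered modules (e.g.\ that they all decompose as sums of shifted $A/x^mA$'s), which you do not prove and which is nowhere in the paper. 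Note also that your appeal to the inductive hypothesis for $F_{n-1}M$ accomplishes nothing by itself: $A/xA\in\langle F_{n-1}M\rangle$ only helps if you first show $F_{n-1}M\in\langle M\rangle$, and that is precisely the missing step.

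The paper closes this gap with a much softer device that avoids any case analysis: since the top quotient and the bottom step of the filtration are shifts of the \emph{same} module $A/xA$, one forms the composite
$$f\colon M\twoheadrightarrow M/F_{n-1}M\cong A/xA[d_n]\cong F_1M[d_n-d_1]\hookrightarrow M[d_n-d_1].$$
Because $A$ is hereditary (Proposition~\ref{prop_module}), the cone of $f$ in $\Perf A$ is quasi-isomorphic to $\coker f\oplus(\ker f)[1]$, and since the second arrow is injective one has $\ker f=F_{n-1}M$; thickness of $\langle M\rangle$ then gives $F_{n-1}M\in\langle M\rangle$, and induction on the filtration length finishes the proof. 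Its kernel and cokernel are not sums of shifts of $A/xA$ in general --- only the direct-summand extraction of $\ker f$ is needed --- which is exactly why this works where your stronger requirement on $\phi$ does not. If you want to salvage your outline, replace the unconstructed $\phi$ by this composite map (or prove the decomposition theorem for $\{x\}$-filtered modules separately, which is substantially more work).
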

\begin{proof}
We argue by induction in the length $n$ of  $\{x\}$-filtration.
For $n=1$ we have $M\cong A/xA[d]$ for some $d$ and there is nothing to prove.
In general,  let $0\subset F_1M\subset \ldots\subset F_{n-1}M\subset F_nM=M$ be the filtration such that $F_iM/F_{i-1}M\cong A/xA[d_i]$ for $i=1,\ldots,n$ and some integers $d_i$. Consider the composition $f$ of the following maps
$$M\to M/F_{n-1}M\cong A/xA[d_n]=A/xA[d_1][d_n-d_1]\cong F_1M[d_n-d_1]\to M[d_n-d_1].$$
By Proposition~\ref{prop_module}, the cone $C$ of $f$ is quasi-isomorphic to the direct sum $\coker f\oplus (\ker f)[1]$. It follows that
$$F_{n-1}M \cong \ker f \in \langle M\rangle.$$
Since the module $F_{n-1}M$ has an $\{x\}$-filtration of length $n-1$, by the induction assumption $A/xA\in \langle F_{n-1}M\rangle$ and we are done.
\end{proof}

Lemmas \ref{lemma_Gdiag} and \ref{lemma_Ggivesx} combine into
\begin{prop}
\label{prop_Mxminimal}
Let $x\in A$ be a good element in a free graded algebra. Then the category $\langle A/xA\rangle$ has no non-trivial thick triangulated subcategories.
\end{prop}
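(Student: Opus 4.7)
The plan is to combine the two preceding lemmas in the obvious way: Lemma~\ref{lemma_Gdiag} characterizes every object of $\langle A/xA\rangle$ as an $\{x\}$-filtered graded module (after replacing a dg module by its cohomology, using Proposition~\ref{prop_module}), while Lemma~\ref{lemma_Ggivesx} says that any nonzero $\{x\}$-filtered module already generates $A/xA$ thickly. Together these should immediately collapse any nontrivial thick subcategory onto the whole of $\langle A/xA\rangle$.

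More precisely, let $\TT\subset \langle A/xA\rangle$ be a nonzero thick triangulated subcategory and pick a nonzero object $M\in\TT$. First I would pass to the cohomology: by Proposition~\ref{prop_module} the dg $A$-module $M$ is quasi-isomorphic to its cohomology graded $A$-module $H(M)$, which is still nonzero and lies in $\langle A/xA\rangle$. Applying Lemma~\ref{lemma_Gdiag} with $\XX=\{x\}$ (which is good by the hypothesis on $x$), I learn that $H(M)$ admits an $\{x\}$-filtration. Since $M\ne 0$, this filtration has length at least one, so $H(M)$ is a nonzero $\{x\}$-filtered graded module.

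Next I would invoke Lemma~\ref{lemma_Ggivesx} applied to $H(M)$: it yields $A/xA\in\langle H(M)\rangle$. Because $H(M)\cong M$ in $\Perf A$ and $\TT$ is thick, we get $A/xA\in\TT$, and therefore $\langle A/xA\rangle\subset \TT$. Combined with the reverse inclusion this gives $\TT=\langle A/xA\rangle$, proving the proposition.

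There is essentially no obstacle here since the real work has been done in the two preceding lemmas; the only subtle point is the routine reduction from an arbitrary dg module in $\Perf A$ to its cohomology before applying Lemma~\ref{lemma_Gdiag}, and verifying that this reduction preserves membership in $\TT$ (which it does, as it is an isomorphism in $\Perf A$ by Proposition~\ref{prop_module}(1)).
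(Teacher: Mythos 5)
Your argument is correct and is precisely the paper's intended proof: the text explicitly states that Lemmas~\ref{lemma_Gdiag} and \ref{lemma_Ggivesx} combine to give Proposition~\ref{prop_Mxminimal}, and your reduction to $H(M)$ via Proposition~\ref{prop_module} plus the thickness of $\TT$ fills in the routine details exactly as intended.
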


Using Proposition~\ref{prop_xprod} we get
\begin{theorem}
\label{theorem_xprod}
Let $A$ be a free graded algbera.
Let $\XX\subset A^*$ be a good subset and $x\in A^*$ be an element. Then
$$A/xA\in\langle A/yA\rangle_{y\in\XX}$$
if and only if
\begin{equation*}
x=\lambda\cdot \prod_{i=1}^n y_i\quad\text{for some}\quad \lambda\in\k^*, \quad y_i\in\XX \quad \text{and}\quad n\ge 0.
\end{equation*}
\end{theorem}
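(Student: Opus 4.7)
The proof plan is essentially to package together the two main results already developed in this section: Lemma~\ref{lemma_Gdiag}, which translates generation of modules $A/xA$ by the family $\{A/yA\}_{y\in\XX}$ into the combinatorial condition of admitting an $\XX$-filtration, and Proposition~\ref{prop_xprod}, which characterizes precisely when the cyclic module $\Lambda/x\Lambda$ admits an $\XX$-filtration in a graded rigid UFD.

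More concretely, first I would observe that $A/xA$ is a finitely presented graded $A$-module (since $A$ is right graded coherent and $xA$ is principal), so by Proposition~\ref{prop_module}(3) it lies in $\Perf A$; this justifies applying Lemma~\ref{lemma_Gdiag} to it. That lemma, applied to the good set $\XX$, yields the equivalence
\[
A/xA \in \langle A/yA\rangle_{y\in\XX} \iff A/xA \text{ is }\XX\text{-filtered.}
\]
Next, since $A$ is a free graded algebra, it is a graded rigid UFD (as recalled in Section~\ref{section_UFD}). Hence Proposition~\ref{prop_xprod} applies and gives
\[
A/xA \text{ is }\XX\text{-filtered}\iff x=\lambda\cdot\prod_{i=1}^{n}y_i\text{ for some }\lambda\in U(A),\ y_i\in\XX,\ n\ge 0.
\]
Finally I would note that in a free graded algebra $A$ generated in degree $1$, the units are exactly the nonzero elements of degree $0$, so $U(A)=\k^*$. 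Chaining these equivalences yields the theorem.

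I do not anticipate any substantial obstacle: both key ingredients are already proved, and the only small check is the identification of units in $A$ with $\k^{*}$ and the verification that $A/xA\in\Perf A$ so that Lemma~\ref{lemma_Gdiag} applies. The real work has been front-loaded into the technical preparation of good sets, $\XX$-filtrations, and the UFD structure of free graded algebras, and this theorem is the clean corollary harvesting that work.
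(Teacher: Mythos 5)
Your proposal is correct and follows exactly the paper's argument: the paper proves Theorem~\ref{theorem_xprod} precisely by combining Lemma~\ref{lemma_Gdiag} with Proposition~\ref{prop_xprod}, with the identification $U(A)=\k^*$ implicit. The extra remarks you add (that $A/xA\in\Perf A$ and that $A$ is a graded rigid UFD) are harmless routine checks already established earlier in the paper.
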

\begin{proof}
Use Proposition~\ref{prop_xprod} and Lemma~\ref{lemma_Gdiag}.
\end{proof}

One can generalize Theorem~\ref{theorem_xprod} to arbitrary (not necessarily good) subsets.
To do this, we need to substitute an arbitrary subset in $A^*$ by a good one.

\begin{prop}
\label{prop_makegood}
Let $A$ be a free graded algebra and $\XX\subset A^*$ be a subset. Then there exists a  good subset $\~\XX\subset A^*$ such that
\begin{equation}
\langle A/xA\rangle_{x\in\XX}=\langle A/xA\rangle_{x\in\~\XX}\quad \text{as subcategories of $\Perf A$}.
\end{equation}
\end{prop}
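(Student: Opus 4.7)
The plan is to construct $\~\XX$ by iterating a single splitting move on subsets of $A^*$. After discarding any units from $\XX$ (they yield the zero module and are forbidden in good sets), set $\SS_0:=\XX\setminus U(A)$. A \emph{witness of non-goodness} for a set $\SS$ of non-units consists of $x,y\in\SS$ (possibly equal) and $a,b,c\in A^*$ with $x=ab$, $y=ca$, $a$ a non-unit, and---when $x=y$---not both of $b,c$ units. Applying Lemma~\ref{lemma_Mg}(3) with $a_1=c$, $a_2=a$, $a_3=b$ (so that $a_1a_2=y$ and $a_2a_3=x$) yields
$$\langle A/xA,A/yA\rangle=\langle A/aA,A/bA,A/cA\rangle,$$
hence the replacement $\SS\mapsto \SS':=(\SS\setminus\{x,y\})\cup(\{a,b,c\}\setminus U(A))$ preserves the generated subcategory $\langle A/zA\rangle_{z\in\SS}$ in $\Perf A$. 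We iterate this move as long as $\SS$ is not good.

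For termination on finite $\SS$, I would use the multiset $\mu(\SS):=\{l(z):z\in\SS\}$ of lengths (in the sense of Proposition~\ref{prop_UFD}) with the Dershowitz--Manna order on finite multisets of $\N$, which is well-founded. A short case analysis shows each splitting move strictly decreases $\mu$. If both $b$ and $c$ are non-units, then $l(a),l(b),l(c)<\max(l(x),l(y))$. If exactly one of $b,c$ is a unit---which forces $x\neq y$---then $a$ is an associate of an endpoint and the other new non-unit factor has length strictly smaller than the opposite endpoint. If both are units, then $x,y$ are distinct associates of $a$ and are replaced by $a$ alone. The case $x=y$ rules out either $b$ or $c$ alone being a unit, since a unit $b$ would force $l(a)=l(x)$ and hence $l(c)=0$, contradicting non-triviality of the witness; so the first subcase applies. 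In every instance $\mu$ strictly decreases, so the iteration terminates at a good set $\~\XX$ with $\langle A/yA\rangle_{y\in\~\XX}=\langle A/xA\rangle_{x\in\XX}$.

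The main obstacle is the case of infinite $\XX$, where the multiset $\mu$ no longer takes values in a well-founded order. I would handle this by well-ordering $\XX$ as $(x_\alpha)_{\alpha<\kappa}$ and defining an increasing chain of good sets $\~\XX_\alpha$ by transfinite recursion: at each successor step, run the finite reduction on $\~\XX_\alpha\cup\{x_\alpha\}\setminus U(A)$ to obtain $\~\XX_{\alpha+1}$; at each limit step, take the union of the previous $\~\XX_\beta$. A check is needed that unions at limit stages remain good---any bad witness involves only two elements and thus is already present at some predecessor stage, where it would have been resolved---and that the generated subcategory is preserved at every stage. The terminal $\~\XX=\bigcup_{\alpha<\kappa}\~\XX_\alpha$ is the desired good subset.
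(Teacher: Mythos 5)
Your finite-set argument is correct and is a genuinely different, rewriting-style route from the paper's (the splitting move via Lemma~\ref{lemma_Mg}(3) does preserve the generated subcategory, and the multiset-of-lengths termination argument checks out). The paper instead argues globally: it sets $T=\langle A/xA\rangle_{x\in\XX}$, lets $\YY=\{y\in A^*\mid A/yA\in T\}$, takes $\ZZ\subset\YY$ to be the elements admitting no factorization into two non-units of $\YY$, proves $\langle A/xA\rangle_{x\in\ZZ}=T$ by induction on $l(y)$ applied to each single $y\in\YY$, and then checks goodness of a set of association-class representatives by contradiction using Lemma~\ref{lemma_yzy} and Lemma~\ref{lemma_Mg}(3). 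That per-element induction is insensitive to the cardinality of $\XX$, which is exactly where your argument breaks down.

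The genuine gap is the infinite case, and the transfinite recursion does not repair it as stated. First, at a successor step the accumulated set $\~\XX_\alpha$ may already be infinite, and then the ``finite reduction'' applied to $\~\XX_\alpha\cup\{x_\alpha\}$ need not terminate: the multiset of lengths is infinite, so the Dershowitz--Manna well-foundedness is unavailable, and there are concrete obstructions. For instance, in $A=\k\{x,y,\ldots\}$ the set $\{x^ny^n\mid n\ge 1\}$ is good, but adjoining the single element $x$ produces a witness with every $x^ny^n$ (take $a=x$, $b=x^{n-1}y^n$, $c=1$); each splitting move alters only finitely many elements and leaves $x$ in the set, while $\{x,x^my^m\}$ is not good for any $m$, so no finite sequence of moves can reach a good set. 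Second, your chain is not increasing: resolving a witness between $x_\alpha$ and an old element removes that element of $\~\XX_\alpha$, so $\~\XX_\alpha\not\subset\~\XX_{\alpha+1}$ in general; consequently the limit-stage argument (``any bad witness already occurs at some predecessor stage, where it was resolved'') does not apply, and the equality of generated subcategories is not controlled through limit stages where generators keep being discarded and replaced. Fixing this would require either a convergence argument for a transfinite rewriting process (with goodness and generation preserved at limits), which is not routine, or abandoning the incremental construction in favour of a global definition of $\~\XX$ in terms of $T$ itself, as in the paper.
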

%
\begin{proof} 

We put $T:=\langle A/xA\rangle_{x\in\XX}$. This is a thick triangulated subcategory of $\Perf A$. Define
$$\YY :=\{x\in A^*\mid A/xA\in T\}$$
and
$$\ZZ :=\{x\in \YY \mid \text{if $x=yz$ with $y,z\in \YY$, then $y$ or $z$ is a unit in $A$}\}$$
Obviously $\langle A/xA\rangle_{x\in\YY}=T$ and $\langle A/xA\rangle_{x\in\ZZ}\subset T$. We claim that in fact $\langle A/xA\rangle_{x\in\ZZ}=T$. Indeed, it suffices to prove that for every $y\in \YY$ we have $A/yA\in \langle A/xA\rangle_{x\in\ZZ}$. If $y\in \ZZ$ there is nothing to prove. Otherwise, there exists a factorization $y=y_1y_2$, with $y_1,y_2\in \YY$ and $l(y_1),l(y_2)>0$. It follows from Lemma \ref{lemma_Mg}(1) that $A/yA\in
\langle A/xA\rangle_{x\in\ZZ}$ if $A/y_1A,A/y_2A\in \langle A/xA\rangle_{x\in\ZZ}$. Hence we are done by induction on  $l(y)$.

For $x,y\in A^*$, put $x\sim y$ if $x$ and $y$ are associated. Choose a subset $\~\XX\subset \ZZ$ which contains exactly one element from every $\sim$ equivalence class in $\ZZ$. Clearly
$$\langle A/xA\rangle_{x\in\~\XX}=\langle A/xA\rangle_{x\in\ZZ}=T$$

We claim that the set $\~\XX$ is good. Indeed, assume by contradiction that there exist elements $x,y\in \~\XX$ such that the set $\{x,y\}$ is not good.

Case (1): $x=y$. Then by Lemma \ref{lemma_yzy} there exist $s,t\in A^*$ with $l(s)>0$ and $x=sts$. It follows from Lemma~\ref{lemma_Mg}(3) (take $a_1=st,a_2=s,a_3=ts$) that 
$A/sA,A/tsA,A/stA\in \langle A/stsA\rangle \subset T$. It follows that $s,ts\in \YY$ and hence $x=sts\notin \ZZ$ --- a contradiction.

Case (2): $x\ne y$. In this case we have
$$x=ab,\quad y=ca$$
such that $l(a)>0$. 
As above, by Lemma~\ref{lemma_Mg}(3) 
$A/aA,A/bA,A/cA\in  T$. It follows that $a,b,c\in \YY$. By definition of $\ZZ$ we have that $b,c$ are units. Hence $x$ and $y$ are associated, a contradiction.

So the set $\~\XX$ is good, which completes the proof of
Proposition \ref{prop_makegood}.
\end{proof}

\begin{example}
For the sets from Example~\ref{example_goodbad} we have
\begin{enumerate}
\item
$\~{\{x^2\}}=\~{\{x^3\}}=\{x\}$;
\item $\~{\{xyx\}}=\~{\{xy^2x^2y\}}=\~{\{xy, yx^2\}}=\~{\{x^2y^3,x^3y^2\}}=\~{\{x^2yxy^2, xy^4xy\}}=\{x,y\}$;
\item $\~{\{xyxy\}}=\{xy\}$;
\item $\~{\{xyx^2y^2, x^2y^2xy\}}=\{xy,x^2y^2\}$.
\end{enumerate}
\end{example}

Combining Theorem~\ref{theorem_xprod} and Proposition~\ref{prop_makegood}, we get
\begin{theorem}
\label{theorem_xprod2}
Let $A$ be a free graded algebra, let $\XX\subset A^*$ be a subset and $x\in A^*$ be an element. Let $\~ \XX\subset A^*$ be the good set constructed in Proposition~\ref{prop_makegood}. Then
$$A/xA\in\langle A/yA\rangle_{y\in\XX}$$
if and only if
\begin{equation*}
x=\lambda\cdot \prod_{i=1}^n y_i\quad\text{for some}\quad \lambda\in\k^*, \quad y_i\in\~\XX \quad \text{and}\quad n\ge 0.
\end{equation*}
\end{theorem}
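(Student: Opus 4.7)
The plan is to reduce this statement to Theorem~\ref{theorem_xprod} by using the replacement procedure of Proposition~\ref{prop_makegood}. Since the set $\XX$ is arbitrary, we cannot directly apply Theorem~\ref{theorem_xprod} (whose hypothesis requires goodness), so the strategy is to substitute $\XX$ with the good set $\~\XX$ provided by Proposition~\ref{prop_makegood} without changing the relevant thick triangulated subcategory.

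First, I invoke Proposition~\ref{prop_makegood} to obtain the good subset $\~\XX \subset A^*$ satisfying
$$\langle A/yA\rangle_{y\in\XX} = \langle A/yA\rangle_{y\in\~\XX}$$
as thick triangulated subcategories of $\Perf A$. This identification immediately shows that $A/xA \in \langle A/yA\rangle_{y\in\XX}$ if and only if $A/xA \in \langle A/yA\rangle_{y\in\~\XX}$, so the left-hand side of the ``if and only if'' in the theorem is equivalent to the analogous statement for $\~\XX$.

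Next, since $\~\XX$ is good by construction, Theorem~\ref{theorem_xprod} applies verbatim and gives that $A/xA \in \langle A/yA\rangle_{y\in\~\XX}$ if and only if
$$x = \lambda \cdot \prod_{i=1}^n y_i \quad \text{for some } \lambda \in \k^*,\ y_i \in \~\XX,\ n \geq 0.$$
Chaining these two equivalences yields the claim. There is no real obstacle here, since both halves of the argument have already been established: the only content of the statement is the observation that the good replacement $\~\XX$ of $\XX$ is precisely the right object to plug into the prime-factorization criterion of Theorem~\ref{theorem_xprod}.
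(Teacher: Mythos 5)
Your proof is correct and is exactly the paper's argument: Proposition~\ref{prop_makegood} replaces $\XX$ by the good set $\~\XX$ generating the same thick subcategory, and then Theorem~\ref{theorem_xprod} applied to $\~\XX$ gives the factorization criterion. Nothing is missing.
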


\begin{corollary}
\label{cor_Mgproper}
Let $A$ be a free graded algebra.
For any family $\XX\subset A^*$ the category
$\langle A/xA\rangle_{x\in \XX}$ is not equal to all $\Perf A$.
\end{corollary}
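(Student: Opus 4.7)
The plan is to show that $A$ itself, regarded as an object of $\Perf A$, does not lie in $\langle A/xA\rangle_{x\in\XX}$; since $A$ classically generates $\Perf A$, this is enough.

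First, I would invoke Proposition~\ref{prop_makegood} to replace $\XX$ by a good subset $\~\XX\subset A^*$ with $\langle A/xA\rangle_{x\in\XX}=\langle A/xA\rangle_{x\in\~\XX}$. By Lemma~\ref{lemma_Gdiag} applied to the good family $\~\XX$, the membership $A\in\langle A/xA\rangle_{x\in\~\XX}$ is equivalent to $A$ being $\~\XX$-filtered as a graded $A$-module, i.e.\ admitting a finite chain $0=F_0\subset F_1\subset\cdots\subset F_n=A$ whose successive quotients are shifts $A/y_iA[c_i]$ with $y_i\in\~\XX$ and $c_i\in\Z$.

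The core of the argument is a Hilbert series obstruction. Assume such a filtration existed, and write $H_M(t):=\sum_n(\dim_\k M_n)\,t^n$. Additivity of $H$ on the short exact sequences $0\to F_{i-1}\to F_i\to A/y_iA[c_i]\to 0$, together with the computations $H_A(t)=(1-Nt)^{-1}$ and $H_{A/yA[c]}(t)=t^{-c}(1-t^{\deg y})(1-Nt)^{-1}$, yields
$$\frac{1}{1-Nt}\;=\;\sum_{i=1}^{n}\frac{t^{-c_i}(1-t^{\deg y_i})}{1-Nt}.$$
Clearing denominators produces the polynomial identity $1=\sum_{i=1}^{n}t^{-c_i}(1-t^{\deg y_i})$ in $\Z[t,t^{-1}]$. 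Since $\~\XX$ is good it contains no units, so $\deg y_i\ge 1$ for every $i$, and each summand on the right vanishes at $t=1$. Evaluating the identity at $t=1$ gives $1=0$, a contradiction. Hence $A$ is not $\~\XX$-filtered, so $A\notin\langle A/xA\rangle_{x\in\XX}$, and the subcategory is proper.

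The main subtlety, and the reason for routing the argument through Lemma~\ref{lemma_Gdiag} rather than working directly in $K_0(\Perf A)$, is that the image of $K_0(\langle A/xA\rangle_{x\in\XX})$ in $K_0(\Perf A)$ can a priori be strictly larger than the subgroup generated by the classes $[A/xA]$: the thick closure picks up direct summands, which may supply classes not visible from the generators alone. The filtration criterion of Lemma~\ref{lemma_Gdiag} sidesteps this issue entirely, because being $\~\XX$-filtered is a rigid structural condition on the graded module whose Hilbert series identity is strict.
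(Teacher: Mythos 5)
Your proposal is correct, and its skeleton coincides with the paper's: both pass to the good set $\~\XX$ via Proposition~\ref{prop_makegood}, then use Lemma~\ref{lemma_Gdiag} to reduce the claim to the statement that $A$ admits no $\~\XX$-filtration. You diverge only in the final obstruction. The paper dispatches this in one line: any degree-preserving homomorphism $A/xA[d]\to A$ is zero (the image of the generator would be annihilated by $x$, impossible in a domain), so already the first step $F_1A\cong A/yA[c]\subset A$ of a putative filtration cannot exist. Your Hilbert-series argument is a valid substitute: all the modules involved are bounded below with finite-dimensional graded pieces, additivity on the short exact sequences gives the Laurent-polynomial identity $1=\sum_i t^{-c_i}(1-t^{\deg y_i})$, and since goodness excludes units every $\deg y_i\ge 1$, so evaluation at $t=1$ yields $1=0$. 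The Hom-vanishing route is shorter and uses only that $A$ has no zero divisors, whereas your Euler-characteristic count is a slightly heavier but equally rigorous tool (and, as you note, correctly avoids the pitfalls of a naive $K_0$ argument, where the classes $[A/xA]$ carry too little information). Either way the conclusion $A\notin\langle A/xA\rangle_{x\in\XX}$ follows, which suffices.
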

\begin{proof}
Let $\~\XX$ be the good set from Proposition~\ref{prop_makegood}.
It suffices to check that $A\notin \langle A/xA\rangle_{x\in \XX}=\langle A/xA\rangle_{x\in \~\XX}$. By Lemma~\ref{lemma_Gdiag}, it suffices to check that $A$ admits no $\~\XX$-filtration. Indeed, all homomorphisms $A/xA[d]\to A$ of graded $A$-modules are zero.
\end{proof}

\section{Lattice of subcategories}
\label{section_lattice}

As usual, we denote by $R_N$ the algebra $\k[y_1,\ldots,y_N]/(y_1,\ldots,y_N)^2$.

In this section we discuss some properties of lattice of thick triangulated subcategories in $D^b(R_N\mmod)$ and compare it to ones for artinian complete intersections studied in \cite{CI}. The following statement is an easy corollary of  Theorem~\ref{theorem_CI}.

\begin{prop}
\label{prop_lattice1}
Let $R$ be a complete intersection ring of dimension zero (see Theorem~\ref{theorem_CI}). Then the lattice~$\LL(R)$ of nonzero finitely generated thick triangulated subcategories in $D^b(R\mmod)$ has the following properties:
\begin{enumerate}
\item $\LL(R)$ satisfies the descending chain condition;
\item there exists the least element $\Perf R$ in $\LL(R)$, let us call minimal elements in $\LL(R)\setminus \{\Perf R\}$ \emph{almost minimal};
\item let $T_1,T_2\in \LL(R)$ be almost minimal. Then there exists finitely many (in fact, only four) elements in $\LL(R)$ bounded above by $\langle T_1, T_2\rangle$ (that is, only four subcategories in $\langle T_1,T_2\rangle$): $\Perf R, T_1,T_2,\langle T_1,T_2\rangle$.
\end{enumerate}
\end{prop}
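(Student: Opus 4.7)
The proof is a translation of everything through the bijection of Theorem~\ref{theorem_CI}, so the first step is to set up the dictionary. Let $S:=\k[\theta_1,\ldots,\theta_r]$ with $\deg\theta_i=1$, write $X^*:=\Spec^* S$, and let $\m:=(\theta_1,\ldots,\theta_r)\subset S$ be the irrelevant homogeneous maximal ideal. Under Theorem~\ref{theorem_CI}, $\LL(R)$ is in inclusion-preserving bijection with the nonempty closed subsets of $X^*$. I would first identify $\Perf R$ in this dictionary: since $\Perf R=\langle R\rangle$ and the support of $R$ is read off from the $S$-action on $\Ext^{\bul}_R(R,\k)\cong\k$ (which is killed by every positive-degree element of $S$), the subcategory $\Perf R$ corresponds to the closed subset $V(\m)=\{\m\}\subset X^*$.

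For (1), $S$ is a finitely generated graded $\k$-algebra, so $X^*$ is a Noetherian topological space; hence any descending chain of closed subsets stabilizes, which transports to the descending chain condition in $\LL(R)$. For (2), any homogeneous maximal ideal of $S$ must coincide with $\m$ (because $S/\m=\k$), so $\{\m\}$ is the unique closed point of $X^*$. Every nonempty closed subset of the Noetherian space $X^*$ contains a closed point, and therefore contains $\m$; translating back, every $T\in\LL(R)$ contains $\Perf R$, which gives the least-element statement.

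For (3), an almost minimal $T\in\LL(R)$ corresponds to a minimal nonempty closed subset of $X^*$ strictly containing $\{\m\}$, so it has the form $\overline{\{\p\}}=\{\m,\p\}$ for some homogeneous prime $\p\subsetneq\m$ that admits no homogeneous prime strictly between $\p$ and $\m$. If the two almost minimal subcategories $T_1,T_2$ correspond to $\{\m,\p_1\}$ and $\{\m,\p_2\}$ with $\p_1\neq\p_2$, then the join $\langle T_1,T_2\rangle$ corresponds to $Z:=\{\m,\p_1,\p_2\}$. The subsets $\{\p_i\}$ and $\{\p_1,\p_2\}$ fail to be closed in $X^*$ because their closures contain $\m$, so the closed subsets of $Z$ are exactly $\emptyset$, $\{\m\}$, $\{\m,\p_1\}$, $\{\m,\p_2\}$, $Z$; discarding $\emptyset$ and transporting back yields the four listed subcategories. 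The argument is elementary throughout, and the only real point of care is the verification that $\Perf R$ corresponds to $\{\m\}$ under the specific support assignment of Theorem~\ref{theorem_CI}; everything else is lattice combinatorics on a three-element subset of a Noetherian space.
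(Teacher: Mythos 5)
Your proof is correct and follows the same overall strategy as the paper: transport everything through the inclusion-preserving bijection of Theorem~\ref{theorem_CI}, get (1) from the Noetherianity of $\Spec^*\k[\theta_1,\ldots,\theta_r]$, and get (3) by observing that the join of two almost minimal elements corresponds to the three-point set $\{\m,\p_1,\p_2\}$, which has exactly four nonempty closed subsets. The one place where you genuinely diverge is item (2), the identification of $\Perf R$ as the least element. You pin down the image of $\Perf R$ under the bijection by computing the support of $R$ over the ring of cohomology operators ($\Ext^{\bul}_R(R,\k)\cong\k$ is killed by the positive-degree part, so $\Supp R=\{\m\}$). This is correct, but it uses the inner construction of the bijection in \cite{CI} (that the closed set attached to a subcategory is the union of supports over the operator ring, and that this support is compatible with generation), none of which is visible in the statement of Theorem~\ref{theorem_CI} as quoted. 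The paper instead argues purely formally: by Theorem~\ref{theorem_Hopkins}, $\Perf R$ has no nontrivial thick triangulated subcategories, so it is a \emph{minimal} element of $\LL(R)$; since the isomorphic lattice of nonempty closed subsets has a least element (namely $\{\m\}$, as every nonempty closed set contains the augmentation ideal), its unique minimal element is the least one, whence $\Perf R$ is the least element of $\LL(R)$ without ever identifying its support. Your route buys a concrete description of where $\Perf R$ sits in the dictionary (at the cost of importing specifics of the cited support theory), while the paper's route is self-contained given only the two theorems already quoted; as you note yourself, this identification is the only point of real care, and either argument settles it.
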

\begin{proof}
By Theorem~\ref{theorem_CI}, $\LL(R)$ is isomorphic to the lattice $\LL$ of nonempty closed subsets in $\Spec^*\k[\theta_1,\ldots,\theta_r]$. Hence it suffices to check the statements for $\LL$. Note that $\Spec^*\k[\theta_1,\ldots,\theta_r]$ is just $\P_\k^{r-1}$ with one ``very special'' point $x_0$ (corresponding to the augmentation ideal of $\k[\theta_1,\ldots,\theta_r]$) attached.

(1) holds because $\Spec^*\k[\theta_1,\ldots,\theta_r]$ is a Noetherian topological space. The least element in $\LL$ is $\{x_0\}$, almost minimal elements in $\LL$ are $\{x_0,P\}$
where $P\in \P^{r-1}_\k$ is a closed point. For $P_1\ne P_2\in \P^{r-1}_\k$ there are only four closed subsets in $\{x_0, P_1,P_2\}=\{x_0, P_1\}\cup \{x_0, P_2\}$, and (3) holds. For (2) it remains to note that $\Perf R$ is a minimal nonzero thick triangulated subcategory in $D^b(R\mmod)$ by Theorem~\ref{theorem_Hopkins}, hence $\Perf R$ is the least one.
\end{proof}

For the category $D^b(R_N\mmod)$ that we study, the picture is completely different.
For example, the subcategory $\Perf R_N$ is far from being the least.
\begin{theorem}
\label{theorem_perfbadR}
Let $R=R_N=\k[y_1,\ldots,y_N]/(y_1,\ldots,y_N)^2$, let $T\subset D^b(R\mmod)$ be a thick triangulated subcategory. Then the following conditions are equivalent
\begin{enumerate}
\item $T=D^b(R\mmod)$;
\item $T\ni \k$;
\item $T\varsupsetneq \Perf R$;
\item $T\cap \Perf R\ne 0$ and $T\not\subset \Perf R$;
\end{enumerate}
If $T=\langle M\rangle$ for an object $M\in D^b(R_N\mmod)$ then the above conditions are equivalent to 
\begin{enumerate}
\item[(5)] the dg algebra $\End_R(M)$ is smooth over $\k$ and $M\ne 0$.
\end{enumerate}
\end{theorem}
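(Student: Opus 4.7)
The strategy is to establish the cycle $(1)\Rightarrow(2)\Rightarrow(1)$ and $(1)\Rightarrow(3)\Rightarrow(4)\Rightarrow(1)$, and then handle $(1)\Leftrightarrow(5)$ separately under $T=\langle M\rangle$. The implications $(1)\Rightarrow(2)$, $(1)\Rightarrow(3)$ and $(3)\Rightarrow(4)$ are immediate: $\k\in D^b(R\mmod)$; $\Perf R\subsetneq D^b(R\mmod)$ because $\k\notin\Perf R$; and from $\Perf R\subset T$ with $T\ne\Perf R$ we read off both $T\cap\Perf R\ne 0$ (as $R$ lies in $T$) and $T\not\subset\Perf R$. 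The implication $(2)\Rightarrow(1)$ uses the fact recalled in Section~\ref{section_schemes} that $[\k]_1=D^b(R\mmod)$: the simple module is a one-step classical generator, so any thick triangulated subcategory containing $\k$ exhausts $D^b(R\mmod)$.

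The main work is $(4)\Rightarrow(1)$. Since $\Spec R$ is zero-dimensional and connected, Theorem~\ref{theorem_Hopkins} gives that $\Perf R$ has only the two thick subcategories $0$ and $\Perf R$, so $T\cap\Perf R\ne 0$ forces $\Perf R\subset T$, in particular $R\in T$. I transport the problem to $\Perf A$ via the equivalence $K'\colon D^b(R\mmod)\to(\Perf A)^{\op}$ of Section~\ref{section_schemes} and set $T':=K'(T)\subset\Perf A$: then $\k=K'(R)\in T'$, while $T\not\subset\Perf R$ provides some $N\in T'$ whose cohomology $H(N)$ is infinite-dimensional. Applying Lemma~\ref{lemma_dissection} to $H(N)$ yields a nonzero finite-rank free graded submodule $F\subset H(N)$ with $H(N)/F$ finite-dimensional; using the quasi-isomorphism $N\simeq H(N)$ from Proposition~\ref{prop_module}, the short exact sequence $0\to F\to H(N)\to H(N)/F\to 0$ becomes a triangle in $\Perf A$, hence $F\in\langle N,\,H(N)/F\rangle$. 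The finite-dimensional finitely presented module $H(N)/F$ has a finite composition series whose subquotients are shifts of $\k$, so $H(N)/F\in\langle\k\rangle\subset T'$. It follows that $F\in T'$, and since $F\ne 0$ is a finite direct sum of shifts of $A$, we obtain $A\in T'$. Thus $T'=\Perf A$ and $T=D^b(R\mmod)$.

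For $(1)\Leftrightarrow(5)$ under $T=\langle M\rangle$, the direction $(1)\Rightarrow(5)$ is Morita-theoretic: a classical generator $M$ of $D^b(R\mmod)$ produces an equivalence $D^b(R\mmod)\simeq\Perf\bigl(\End^{dg}_R(M)\bigr)$ in the unique dg enhancement (\cite{LO}); combined with $D^b(R\mmod)\simeq\Perf A$ from Section~\ref{section_schemes} this makes $\End^{dg}_R(M)$ derived Morita equivalent to the free graded algebra $A=A_N$. Since $A$ is smooth---it admits the length-one bimodule resolution $0\to A\otimes V\otimes A\to A\otimes A\to A\to 0$, where $V=\bigoplus_i\k x_i$---and smoothness is Morita-invariant, $\End^{dg}_R(M)$ is smooth, and of course $M\ne 0$. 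For the converse $(5)\Rightarrow(1)$, assume $\End^{dg}_R(M)$ is smooth with $M\ne 0$ and suppose for contradiction $\langle M\rangle\ne D^b(R\mmod)$. By the established equivalence of (1)--(4), either $\langle M\rangle\subset\Perf R$, in which case $\langle M\rangle=\Perf R$ (only trivial thick subcategories by Theorem~\ref{theorem_Hopkins}) and $\End^{dg}_R(M)$ is Morita equivalent to $R=\End^{dg}_R(R)$, contradicting smoothness since $R$ is a commutative non-regular Artinian $\k$-algebra and so is not smooth; or $\langle M\rangle\cap\Perf R=0$, where a separate contradiction with smoothness is required.

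The principal obstacle is this last case $\langle M\rangle\cap\Perf R=0$ in $(5)\Rightarrow(1)$. My plan is to exploit the characterization that, for a smooth dg $\k$-algebra $B$, every dg $B$-module with finite-dimensional total cohomology lies in $\Perf B$. Transporting through the equivalence $\langle M\rangle\simeq\Perf\bigl(\End^{dg}_R(M)\bigr)$ and then through $K'$, this should produce a nonzero object of $\langle M\rangle$ whose Koszul dual has finite-dimensional cohomology over $A$, i.e.\ a nonzero object of $\langle M\rangle\cap\Perf R$, contradicting the hypothesis. Identifying the correct auxiliary dg module on the $\End^{dg}_R(M)$-side---most naturally a simple module obtained from the smooth diagonal bimodule---and tracking it across both equivalences is the technical crux.
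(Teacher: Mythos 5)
Your handling of the equivalence of (1)--(4) is correct and follows essentially the paper's own route: Hopkins--Neeman forces $\Perf R\subset T$ out of $T\cap\Perf R\ne 0$, the equivalence $K'$ plus Lemma~\ref{lemma_dissection} extracts a nonzero free summand lying in $T$, and thickness gives $A$, i.e.\ $\k\in T$, whence $T=D^b(R\mmod)$ by $[\k]_1=D^b(R\mmod)$. Your (1)$\Rightarrow$(5) via Koszul duality, smoothness of the tensor algebra $A$ (the standard length-one bimodule resolution) and Morita invariance of smoothness is a legitimate, somewhat more self-contained substitute for the paper's appeal to \cite{LS}/\cite{ELS}, and your case $\langle M\rangle\subset\Perf R$ inside (5)$\Rightarrow$(1) is also fine.

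The genuine gap is the remaining case of (5)$\Rightarrow$(1), namely $\langle M\rangle\cap\Perf R=0$ with $M\ne 0$, which you yourself flag as unresolved; your sketched plan would not close it. The fact that $D_{fd}(B)\subset\Perf B$ for the smooth dg algebra $B=\End_R(M)$ points in the wrong direction: to get a contradiction you must produce a nonzero object of $\langle M\rangle$ lying in $\Perf R$, equivalently (after $K'$) a nonzero object of $\langle K'(M)\rangle\subset\Perf A$ with finite-dimensional cohomology over $\k$. But a nonzero dg $B$-module with finite-dimensional cohomology need not exist at all (for a graded-field-like smooth algebra such as $\k[t,t^{-1}]$ one has $D_{fd}=0$), there is no canonical ``simple module obtained from the diagonal bimodule'', and even when such a $P$ exists, finite-dimensionality of $H(P)$ over $B$ does not transfer under $P\mapsto P\otimes_B K'(M)$ to finite-dimensionality over $\k$ on the $A$-side, which is what membership in $\Perf R$ means. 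In fact, in the situation you are trying to rule out there genuinely is no nonzero object of $\langle M\rangle\cap\Perf R$ (compare $\langle A/xA\rangle$ for good $x$, all of whose nonzero objects are infinite-dimensional by Lemma~\ref{lemma_Gdiag}), so any proof must derive the contradiction from smoothness itself rather than by exhibiting such an object. This is precisely where the paper leans on external results: smoothness of $\End_A(M)$ gives a strong generator of $\langle M\rangle$ (Lemmas 3.5 and 3.6 in \cite{Lu}), and then \cite[Theorem 1]{EL} forces $\langle M\rangle=0$ or $\Perf A$. Your proposal neither invokes these nor supplies a replacement argument, so the implication (5)$\Rightarrow$(1) remains unproved.
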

\begin{proof}
Recall (see Section~\ref{section_schemes}) that there is an  equivalence $K'\colon D^b(R\mmod)\to (\Perf A)^{\op}$ (where $A$ is a free graded algebra) which maps $\Perf R$ to $D_{fd}(A)\subset \Perf A$. Now the statement follows from Proposition~\ref{prop_perfbadA}.
\end{proof}

\begin{prop}
\label{prop_perfbadA}
Let $A$ be a free graded algebra, let $T\subset \Perf A$ be a thick triangulated subcategory. Then the following conditions are equivalent
\begin{enumerate}
\item $T=\Perf A$;
\item $T\ni A$;
\item $T \varsupsetneq D_{fd}(A)$;
\item $T\cap D_{fd}(A)\ne 0$ and $T\not\subset D_{fd}(A)$;
\end{enumerate}
If $T=\langle M\rangle$ for an object $M\in \Perf A$ then the above conditions are equivalent to 
\begin{enumerate}
\item[(5)] the dg algebra $\End_A(M)$ is smooth over $\k$ and $M\ne 0$.
\end{enumerate}
\end{prop}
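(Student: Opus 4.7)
The plan is to prove the cycle $(1) \Rightarrow (3) \Rightarrow (4) \Rightarrow (2) \Rightarrow (1)$ and then establish $(1) \Leftrightarrow (5)$ under the hypothesis $T = \langle M\rangle$. The implication $(2) \Rightarrow (1)$ is immediate since $A$ classically generates $\Perf A$ and $T$ is thick. For $(1) \Rightarrow (3)$, I note that $A \in \Perf A \setminus D_{fd}(A)$ (since $H(A) = A$ is infinite-dimensional over $\k$), while the inclusion $D_{fd}(A) \subseteq \Perf A$ follows from Proposition~\ref{prop_module}(3) applied to finite-dimensional (hence finitely presented) graded $A$-modules. The step $(3) \Rightarrow (4)$ is immediate from the definitions.

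The key step is $(4) \Rightarrow (2)$. Under the contravariant equivalence $K' \colon D^b(R_N\mmod) \xra{\sim} (\Perf A)^{\op}$ from Section~\ref{section_schemes}, the subcategory $\Perf R_N$ corresponds to $D_{fd}(A)^{\op}$. Since $R_N$ is a zero-dimensional local ring, Theorem~\ref{theorem_Hopkins} shows that the only thick triangulated subcategories of $\Perf R_N$ are $0$ and $\Perf R_N$; hence the same holds for $D_{fd}(A)$. The intersection $T \cap D_{fd}(A)$ is a thick triangulated subcategory of $D_{fd}(A)$ that is nonzero by hypothesis, so it must equal $D_{fd}(A)$; i.e., $D_{fd}(A) \subseteq T$. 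Now pick $M \in T$ with $M \notin D_{fd}(A)$ (such $M$ exists by the second part of (4)). By Lemma~\ref{lemma_dissection} the cohomology $H(M)$ contains a nonzero free graded submodule $F$ of finite rank with $H(M)/F$ finite-dimensional. The triangle $F \to M \to M/F \to F[1]$ in $\Perf A$ then has $M/F \in D_{fd}(A) \subseteq T$, so $F \in T$; since $F$ is a nonzero finite direct sum of shifts of $A$, thickness of $T$ yields $A \in T$, which is (2).

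For $(1) \Rightarrow (5)$: when $\langle M\rangle = \Perf A$ the object $M$ is a classical generator, so the functor $- \otimes_B^L M \colon \Perf B \to \Perf A$, with $B := \End_A(M)$ the dg endomorphism algebra, is a triangulated equivalence and $B$ is derived Morita equivalent to $A$. Free graded algebras are smooth dg $\k$-algebras (the diagonal bimodule admits a length-one resolution by free $A^e$-modules), and smoothness is invariant under derived Morita equivalence, so $B$ is smooth; clearly $M \neq 0$. For the converse $(5) \Rightarrow (1)$, suppose $M \neq 0$ and $B = \End_A(M)$ is smooth, and argue by contradiction: if $\langle M\rangle \neq \Perf A$, then by the already-established equivalences either $M \in D_{fd}(A)$ (Case~A) or $\langle M\rangle \cap D_{fd}(A) = 0$ (Case~B). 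In Case~A, $M$ corresponds under $K'$ to a nonzero object of $\Perf R_N$ which classically generates $\Perf R_N$ by Theorem~\ref{theorem_Hopkins}, so $B$ is derived Morita equivalent to $R_N$; this contradicts smoothness, since $R_N$ has infinite global dimension (the simple module $\k_{R_N}$ has no finite projective resolution).

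The main obstacle is Case~B of $(5) \Rightarrow (1)$: deriving a contradiction from $M \notin D_{fd}(A)$ together with $\langle M\rangle \cap D_{fd}(A) = 0$ and smoothness of $B$. The natural approach uses that smoothness of $B$ gives $D_{fd}(B) \subseteq \Perf B$ (one resolves a finite-dimensional dg $B$-module using a perfect $B^e$-resolution of the diagonal) and then transports a nonzero $P \in D_{fd}(B)$ across the Morita equivalence $\Perf B \simeq T$ to produce an object of $T \cap D_{fd}(A)$, contradicting Case~B. This last step is delicate, since $P \otimes_B^L M$ unfolds as a finite complex of sums of shifts of $M$, and in Case~B every nonzero object of $\langle M\rangle$ has infinite-dimensional cohomology, so finite-dimensionality of the total cohomology must arise from cancellations dictated by the $A$-$B^{\op}$-bimodule structure on $M$ and by the hereditary nature of $A$.
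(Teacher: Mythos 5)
Your treatment of the equivalence of (1)--(4) is correct and follows essentially the paper's route: (4) forces $D_{fd}(A)\subset T$ via Theorem~\ref{theorem_Hopkins} applied through the equivalence $K'$, and then Lemma~\ref{lemma_dissection} plus thickness gives $A\in T$. Your proof of (1) $\Rightarrow$ (5) is also fine and marginally more direct than the paper's (you use smoothness of the free algebra $A$ itself, via the length-one bimodule resolution, plus Morita invariance, where the paper cites smoothness of the enhancement of $D^b(R\mmod)$ from \cite{LS}, \cite{ELS}); both are legitimate.

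The genuine gap is in (5) $\Rightarrow$ (1), precisely in your Case~B, and you have not closed it. Your proposed mechanism --- take $0\ne P\in D_{fd}(B)\subset \Perf B$ (using smoothness of $B$) and transport it across $\Perf B\simeq \langle M\rangle$ to get a nonzero object of $\langle M\rangle\cap D_{fd}(A)$ --- does not go through as stated: there is no reason for $P\otimes^L_B M$ to have finite-dimensional cohomology over $\k$, since $M$ itself is infinite-dimensional; and in Case~B the hypothesis $\langle M\rangle\cap D_{fd}(A)=0$ says outright that no nonzero object of $\langle M\rangle$ can be finite-dimensional, so the "cancellations" you hope for cannot occur unless the transported object is zero, which it is not. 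So the contradiction must be extracted in a different way, and this is exactly where the real content of the implication lies. The paper handles it by a substantial external input: smoothness of $\End_A(M)$ implies that $\langle M\rangle$ admits a strong generator (Lemmas 3.5 and 3.6 in \cite{Lu}), and then Theorem~1 of \cite{EL} (on regular thick subcategories in bounded derived categories of affine schemes) forces $\langle M\rangle$ to be $0$ or all of $\Perf A$; note this argument does not need the Case~A/Case~B dichotomy at all. Without either that theorem or a genuine replacement for it, your Case~B remains open, so the proposal proves (1)--(4) and (1) $\Rightarrow$ (5) but not (5) $\Rightarrow$ (1).
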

\begin{proof}
(1) $\Longleftrightarrow$ (2) is evident.

(1) $\Longrightarrow$ (3) $\Longrightarrow$ (4) is clear.

(4) $\Longrightarrow$ (3) follows from Theorem~\ref{theorem_Hopkins}. Indeed, the intersection $T\cap D_{fd}(A)$ corresponds under equivalence $K'$ to some nonzero thick triangulated subcategory in $\Perf X$. Since $X$ is a point (as a topological space) it follows that this subcategory is all $\Perf X$. Therefore
$T\supset D_{fd}(A)$.

(3) $\Longrightarrow$ (2) follows from Lemma~\ref{lemma_dissection}.
Take an object $M\in T$, $M\notin D_{fd}(A)$.
By Proposition~\ref{prop_module} we can assume that $M$ is a graded  $A$-module satisfying assumptions of Lemma~\ref{lemma_dissection}. Hence there exists a free submodule $F\subset M$ such that $M/F\in D_{fd}(A)\subset T$. Since $M\notin D_{fd}(A)$, we get that $F$ is nonzero. As $M/F\in T$ we deduce that $F\in T$. Since the subcategory $T$ is thick and triangulated it follows that $A\in T$.

(1) $\Longrightarrow$ (5) follows from the fact that a dg enhancement of $D^b(R\mmod)$ is a smooth dg category, see \cite[Theorem 1.3]{LS} or \cite[Theorem A]{ELS}.

(5) $\Longrightarrow$ (1) follows from \cite{EL}. Indeed, if the dg algebra $\End_A(M)$ is smooth then the category  $\langle M\rangle$ has a strong generator (see for example Lemma 3.5 and 3.6 in \cite{Lu}). It follows then from
\cite[Theorem 1]{EL} that  $\langle M\rangle=0$ or $\Perf A$.
\end{proof}

\begin{prop}
\label{prop_lattice2}
Let  $R_N=\k[y_1,\ldots,y_N]/(y_1,\ldots,y_N)^2$, $N\ge 2$.
The lattice $\LL(R_N)$ of nonzero finitely generated thick triangulated subcategories in $D^b(R_N\mmod)$ has the following properties:
\begin{enumerate}
\item $\LL(R_N)$ does not satisfy the descending chain condition.
\item There exists no least element in $\LL(R_N)$. Category $\Perf R_N$ is minimal and almost maximal in $\LL(R_N)$: there are no elements between $\Perf R_N$ and $D^b(R_N\mmod)$.
\item There exist minimal elements $T_1,T_2\in \LL(R_N)$ such that there are infinitely many elements bounded above by $\langle T_1,T_2\rangle$ (that is, infinitely many  subcategories in $\langle T_1,T_2\rangle$).
\end{enumerate}
\end{prop}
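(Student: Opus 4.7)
\textbf{Proof plan for Proposition~\ref{prop_lattice2}.}

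The approach is to translate everything, via the equivalence $K' \colon D^b(R_N\mmod) \xrightarrow{\sim} (\Perf A_N)^{\op}$ of Section~\ref{section_schemes}, into statements about finitely generated thick triangulated subcategories of $\Perf A_N$. A thick triangulated subcategory is determined by its set of objects, and a category and its opposite have the same collection of thick triangulated subcategories, so $K'$ induces a poset isomorphism between $\LL(R_N)$ and the lattice of nonzero finitely generated thick triangulated subcategories of $\Perf A_N$, preserving all inclusions.

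For (2), the category $\Perf R_N$ has no nonzero proper thick triangulated subcategories by Theorem~\ref{theorem_Hopkins}, since $\Spec R_N$ is a single point; hence it is minimal in $\LL(R_N)$. The almost-maximal assertion is exactly the equivalence (1)$\Leftrightarrow$(3) of Theorem~\ref{theorem_perfbadR}. To rule out a least element I would exhibit two distinct minimal elements of $\LL(R_N)$: namely $\Perf R_N$ itself, and $K'^{-1}\langle A_N/x_1 A_N\rangle$, the latter minimal by Proposition~\ref{prop_Mxminimal} since $x_1$ is good. These differ because $A_N/x_1 A_N \notin D_{fd}(A_N) = K'(\Perf R_N)$ (its cohomology is infinite-dimensional over $\k$), so a putative least element would have to coincide with both, a contradiction.

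For (3), take $T_1 = K'^{-1}\langle A_N/x_1 A_N\rangle$ and $T_2 = K'^{-1}\langle A_N/x_2 A_N\rangle$, both minimal as in (2). By Lemma~\ref{lemma_Mg}(1) each $\langle A_N/(x_1 x_2^k) A_N\rangle$ with $k \ge 0$ lies inside $\langle A_N/x_1 A_N, A_N/x_2 A_N\rangle$. The prefixes $\{1, x_1, x_1 x_2, \ldots, x_1 x_2^k\}$ and suffixes $\{1, x_2, \ldots, x_2^k, x_1 x_2^k\}$ of the monomial $x_1 x_2^k$ intersect only in $\{1,\, x_1 x_2^k\}$, so the singleton $\{x_1 x_2^k\}$ is good (Definition~\ref{def_goodelement}) and each such subcategory is minimal by Proposition~\ref{prop_Mxminimal}. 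Pairwise distinctness follows from Theorem~\ref{theorem_xprod}: an equality $\langle A_N/(x_1 x_2^k) A_N\rangle = \langle A_N/(x_1 x_2^l) A_N\rangle$ would force $x_1 x_2^l = \lambda (x_1 x_2^k)^m$ for some $m\ge 0$, and counting occurrences of $x_1$ on both sides yields $m = 1$ and $k = l$. Pulling back along $K'^{-1}$ produces infinitely many distinct elements of $\LL(R_N)$ bounded above by $\langle T_1, T_2\rangle$.

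For (1), I would deduce the failure of the descending chain condition from Theorem~\ref{theorem_tree} (proved later in the paper), which constructs an infinite descending binary tree of finitely generated thick triangulated subcategories of $D^b(R_N\mmod)$; any root-to-leaf path in that tree produces a strictly descending infinite chain in $\LL(R_N)$. This is the main obstacle: the natural operations that change finitely generated subcategories — enlarging the generating set, or inverting additional elements via Corollary~\ref{cor_G1G2} — produce \emph{ascending} rather than descending chains, so the descent must be obtained through the more delicate combinatorial construction underlying Theorem~\ref{theorem_tree}.
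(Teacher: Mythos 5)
Your proposal is correct, and it overlaps with the paper's proof in parts (2) (minimality of $\Perf R_N$ via Theorem~\ref{theorem_Hopkins}, almost-maximality via Theorem~\ref{theorem_perfbadR}, non-existence of a least element from two distinct minimal elements -- the paper uses $\langle M_{x_1}\rangle\ne\langle M_{x_2}\rangle$ via Proposition~\ref{prop_Mxminimal} and Theorem~\ref{theorem_xprod}, while you use $\Perf R_N$ and $\langle M_{x_1}\rangle$, a harmless variant). Where you genuinely diverge is in (1) and (3). The paper handles both with a single construction: the explicit strictly descending chain $\langle M_{a_1},M_{b_1}\rangle\varsupsetneq\langle M_{a_2},M_{b_2}\rangle\varsupsetneq\cdots$ of Example~\ref{example_chain} (built from Lemma~\ref{lemma_1423} and Theorem~\ref{theorem_xprod}, with $a_1=x_1$, $b_1=x_2$), which simultaneously kills DCC and supplies infinitely many subcategories inside $\langle M_{x_1},M_{x_2}\rangle$. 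Your (3) instead uses the family $\langle M_{x_1x_2^k}\rangle$, $k\ge 0$: the prefix/suffix check that each $x_1x_2^k$ is good is right, minimality follows from Proposition~\ref{prop_Mxminimal}, containment in $\langle M_{x_1},M_{x_2}\rangle$ from Lemma~\ref{lemma_Mg}(1), and distinctness from Theorem~\ref{theorem_xprod}; this is arguably simpler than the paper's route, avoids Lemma~\ref{lemma_1423} for this part, and even yields infinitely many \emph{minimal} elements below $\langle T_1,T_2\rangle$. Your (1) outsources the descent to Theorem~\ref{theorem_tree}; this forward reference is non-circular, since the proof of Theorem~\ref{theorem_tree} rests only on Lemmas~\ref{lemma_1423}, \ref{lemma_zero}, \ref{lemma_XY}, \ref{lemma_Gdiag} and Theorem~\ref{theorem_xprod}, not on Proposition~\ref{prop_lattice2}, though you should add the one-line remark that strictness of the inclusions along an infinite path $T_P\supset T_{P^+}\supset T_{P^{++}}\supset\cdots$ follows from the sibling condition $T_{P^+}\cap T_{P^-}=0$ together with $T_{P^-}\ne 0$; the paper's Example~\ref{example_chain} is exactly the lighter, single-path version of this tree, so logically the two arguments lean on the same combinatorial lemma, with yours invoking the stronger packaged statement.
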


To prove Proposition~\ref{prop_lattice2} we need the following Lemma.
\begin{lemma}
\label{lemma_1423}
Let $\Lambda$ be a graded rigid UFD. Suppose that elements $a\ne b\in\Lambda$ form a good set $\{a,b\}$ (see Definition~\ref{def_goodelement}) and $l(a)=l(b)$ (see Definition~\ref{def_strong}). Then the set
$$\{aba^8b^8, a^2b^2a^7b^7,a^3b^3a^6b^6, a^4b^4a^5b^5\}$$
is also good, its four elements are different and
$$l(aba^8b^8)=l(a^2b^2a^7b^7)=l(a^3b^3a^6b^6)=l(a^4b^4a^5b^5)=18\cdot l(a).$$
\end{lemma}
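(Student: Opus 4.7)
The plan is to set $n := l(a) = l(b)$ and $w_i := a^i b^i a^{9-i} b^{9-i}$ for $i = 1, 2, 3, 4$, and verify the three conclusions. Additivity of $l$ gives $l(w_i) = 9 l(a) + 9 l(b) = 18 n$ immediately. The four $w_i$'s are distinct monomials in the letters $a, b$, so by Lemma~\ref{lemma_monomialfaith} applied to the good set $\{a, b\}$ they are pairwise non-associated, hence distinct, in $\Lambda^*$.

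For goodness, suppose $w_i = p q$ and $w_j = r p$ with $p \notin U(\Lambda)$, and put $k := l(p) > 0$; I must show $i = j$ and $q, r \in U(\Lambda)$. By Lemma~\ref{lemma_abcd}, $p$ is associated both to the product of the first $k$ primes of $w_i$ (using fixed prime factorizations $a = p_1 \cdots p_n$ and $b = q_1 \cdots q_n$) and to the product of the last $k$ primes of $w_j$. If $k = 18 n$ then $w_i \sim w_j$, so $w_i = w_j$ by Lemma~\ref{lemma_monomialfaith}, forcing $i = j$ with $q, r \in U(\Lambda)$. Otherwise $0 < k < 18 n$; write $k = \alpha n + s$ with $0 \le \alpha \le 17$ and $0 \le s < n$ and seek a contradiction.

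Sub-case $s = 0$: the first $k$ primes of $w_i$ and the last $k$ primes of $w_j$ are monomials in $\{a, b\}$ of letter-length $\alpha$, and by Lemma~\ref{lemma_monomialfaith} their associatedness implies equality as words. Since $w_i$ starts with $a$ and $w_j$ ends with $b$ (because $i \ge 1$ and $9 - j \ge 5$ for $i, j \in \{1, 2, 3, 4\}$), comparing initial $a$-runs of length $\min(\alpha, i)$ on the prefix side against the suffix's run (which is $\alpha + j - 9$, $\alpha + j - 18$, or $0$ depending on which block of $a^j b^j a^{9-j} b^{9-j}$ contains position $19 - \alpha$) shows that the only candidates are $\alpha = 9 + i - j$ and $\alpha = 18 + i - j$. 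The former reduces the full word equation to $b^i a^{9 - i - j} = b^{9 - j}$, forcing $i + j = 9$, impossible for $i, j \le 4$; the latter reduces it to $b^i a^{9 - i} = b^j a^{9 - j}$, forcing $i = j$ and hence $\alpha = 18$, outside the allowed range $[1, 17]$.

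Sub-case $0 < s < n$: the first $k$ primes of $w_i$ have the form $L_1 \cdots L_\alpha \cdot L'_{\alpha + 1}$, where $L_t$ are the successive letters of $w_i$ and $L'_{\alpha + 1}$ denotes the product of the first $s$ primes of $L_{\alpha + 1}$; similarly the last $k$ primes of $w_j$ have the form $K'_{18 - \alpha} \cdot K_{19 - \alpha} \cdots K_{18}$, where $K'_{18 - \alpha}$ denotes the product of the last $s$ primes of $K_{18 - \alpha}$. Position-wise uniqueness of prime factorization (Proposition~\ref{prop_UFD}) forces the last $s$ primes on each side to be associated, and since $K_{18} = b$ this yields $L'_{\alpha + 1} \sim q_{n - s + 1} \cdots q_n$. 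If $L_{\alpha + 1} = b$, this is the self-overlap $q_1 \cdots q_s \sim q_{n - s + 1} \cdots q_n$ of $b$ with $0 < s < n$, contradicting the goodness of $\{b\}$; if $L_{\alpha + 1} = a$, combining this relation with $a = (p_1 \cdots p_s)(p_{s+1} \cdots p_n)$ and $b = (q_1 \cdots q_{n - s})(q_{n - s + 1} \cdots q_n)$ and invoking the normality of $U(\Lambda)$ yields $a = a' b'$ and $b = c' a'$ with $a' = p_1 \cdots p_s \notin U(\Lambda)$ and $a \neq b$, contradicting the goodness of $\{a, b\}$. The main obstacle is the combinatorial $s = 0$ sub-case: the specific exponents paired as $(1, 8), (2, 7), (3, 6), (4, 5)$ in $w_1, \ldots, w_4$ are exactly what make both candidate values of $\alpha$ inadmissible within $[1, 17]$.
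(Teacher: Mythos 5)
Your proof is correct and follows essentially the same route as the paper: decompose the common factor $p$ into blocks of primes via Lemma~\ref{lemma_abcd}, compare a prefix of $w_i$ with a suffix of $w_j$ using Lemma~\ref{lemma_monomialfaith}, and derive a contradiction with the goodness of $\{a,b\}$ when the cut falls strictly inside a letter. The only divergence is in the letter-aligned case, where the paper simply notes that a prefix of $w_i$ equal to a suffix of $w_j$ must contain $a$ and $b$ in equal numbers (so it is $a^ib^i$, $a^{9-j}b^{9-j}$, or a full word, which match only when $w_i=w_j$), while you reach the same conclusion by an explicit run-length analysis; both are valid.
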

\begin{proof}

First, $a\ne b$ implies that the four elements $aba^8b^8, a^2b^2a^7b^7,a^3b^3a^6b^6, a^4b^4a^5b^5$ are different by Lemma~\ref{lemma_monomialfaith}. Now suppose $x,y\in \{aba^8b^8, a^2b^2a^7b^7,a^3b^3a^6b^6, a^4b^4a^5b^5\}$ and $x=cp, y=qc$. Assume that $c$ is not a unit, then $0<l(c)$. Consider two cases: $l(a)|l(c)$ or not.

If $l(a)|l(c)$ then $c$ is associated with some left submonomial $m_1$ in $x$ by Lemma~\ref{lemma_abcd}. Similarly, $c$ is associated with some right submonomial $m_2$ in  $y$.
Hence, monomials $m_1,m_2$ in variables $a,b$ are associated and by Lemma~\ref{lemma_monomialfaith}(2) $m_1=m_2$ as monomials. It follows that $a$ and $b$ occur in $m_1=m_2$ in equal quantity. It is possible only if  $m_1=a^ib^i$ for $1\le i\le 4$ or $m_1=x$. Similarly, $m_2=a^ib^i$ for $5\le i\le 8$ or $m_2=y$. Therefore
$m_1=m_2=x=y$, $p,q$ are units and we are done.

Now suppose $l(c)$ is not divisible by $l(a)$. Then by Lemma~\ref{lemma_abcd} we have
$c=m_1z_1$ where $m_1$ is a monomial in $a,b$ and $z=z_1z_2$ with $z=a$ or $b$ and $0<l(z_1)<l(a)$. Hence $y=qc=qm_1z_1$. Again by Lemma~\ref{lemma_abcd} we have
$b=b_1z_1$ for some $b_1$. Decompositions $z=z_1z_2$, $b=b_1z_1$ contradict to goodness of $\{a,b\}$.

The equalities of lengths are trivial.
\end{proof}

\begin{example}
\label{example_chain}
Let $A=\k\{x_1,\ldots,x_N\}$ be the free algebra with $N\ge 2$. Denote $a_1=x_1,b_1=x_2$ and define inductively $a_k,b_k$ by the rule
$$a_{k+1}=a_kb_ka_k^8b_k^8,\quad b_{k+1}=a_k^3b_k^3a_k^6b_k^6.$$
By Lemma~\ref{lemma_1423} we get that all sets $\{a_k,b_k\}$ are good and $l(a_k)=l(b_k)$ for all $k$.
Using  Theorem~\ref{theorem_xprod}, we see that $M_{a_{k+1}},M_{b_{k+1}}\in \langle M_{a_{k}},M_{b_{k}}\rangle$ while $M_{a_{k}},M_{b_{k}}\notin \langle M_{a_{k+1}},M_{b_{k+1}}\rangle$ for any $k\ge 1$. Thus we get an infinite chain of thick triangulated subcategories
$$\Perf A\supset \langle M_{a_1},M_{b_1}\rangle \varsupsetneq
\langle M_{a_2},M_{b_2}\rangle \varsupsetneq \langle M_{a_3},M_{b_3}\rangle\varsupsetneq\ldots$$
\end{example}

\begin{proof}[Proof of Proposition~\ref{prop_lattice2}] We may identify the categories $D^b(R\mmod)$ and $(\Perf A)^{\op}$ via the equivalence $K'\colon D^b(R\mmod)\to (\Perf A)^{\op}$.
For (1), consider the descending chain from Example~\ref{example_chain}. For (2), note that $\langle M_{x_1}\rangle$ and $\langle M_{x_2}\rangle$ are minimal elements  of $\LL(R_N)$ by Proposition~\ref{prop_Mxminimal} and  $\langle M_{x_1}\rangle \ne  \langle M_{x_2}\rangle$ by Theorem~\ref{theorem_xprod}. Therefore $\LL(R_N)$ has no least element. Category $\Perf R_N$ is minimal by Theorem~\ref{theorem_Hopkins} and almost maximal by  Theorem~\ref{theorem_perfbadR}. For (3)  take $T_1=\langle M_{x_1}\rangle$, $T_2=\langle M_{x_2}\rangle$, see Example~\ref{example_chain}.
\end{proof}

We can strengthen Example~\ref{example_chain} by constructing a descending binary tree of embedded subcategories.

\begin{definition}
\label{def_tree}
Let $T$ be an additive category. By a \emph{binary tree} of full subcategories in $T$ we mean a family of nonzero full subcategories $T_\e\subset T$ indexed by nodes of an infinite descending binary tree such that
\begin{itemize}
\item $T_\e\subset T_\delta$ if and only if $\e$ is a successor of $\delta$,
\item $T_\e\cap T_\delta=0$ if $\e$ and $\delta$ are  not successors of one another.
\end{itemize}
\end{definition}


\begin{lemma}
\label{lemma_XY}
Let $A$ be a free graded algebra, let $\XX,\YY\subset A^*$ be subsets such that $\XX\cap \YY=\emptyset$ and $\XX\cup \YY$ is good. Assume $M,N$ are graded $A$-modules, $M$ has an $\XX$-filtration and $N$ has a $\YY$-filtration. Then $\Hom_{\grmod A}(M,N)=0$.
\end{lemma}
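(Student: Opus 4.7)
The plan is to reduce the statement to the single-piece case $\Hom_{\grmod A}(A/xA[d], A/yA[e])$ for $x \in \XX$, $y \in \YY$, $d, e \in \Z$, where it follows immediately from Lemma~\ref{lemma_AxAAxA}. Indeed, since $\XX\cap\YY=\emptyset$ and $\XX\cup\YY$ is good, any $x\in\XX$ and $y\in\YY$ are distinct elements of a good set, so Lemma~\ref{lemma_AxAAxA} gives $\Hom^{\bul}_{\grmod A}(A/xA, A/yA) = 0$, and in particular $\Hom_{\grmod A}(A/xA[d], A/yA[e]) = 0$ for every $d, e$. Since we are after a vanishing result, it is in fact cleanest to prove the stronger statement $\Hom^{\bul}_{\grmod A}(M, N) = 0$.

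The reduction from the general case to the single-piece case is by a standard double induction on the lengths of the two filtrations. First, fix $M$ and induct on the length $n$ of the $\YY$-filtration on $N$. The base $n = 0$ (i.e.\ $N = 0$) is trivial. For the inductive step, consider a short exact sequence of graded $A$-modules
\begin{equation*}
0 \to N' \to N \to A/yA[e] \to 0
\end{equation*}
with $y \in \YY$, $e \in \Z$, and $N'$ admitting a $\YY$-filtration of length $n-1$. Applying $\Hom^{\bul}_{\grmod A}(M, -)$ yields an exact sequence whose outer terms vanish by the inductive hypothesis (for $N'$) and by the claim $\Hom^{\bul}_{\grmod A}(M, A/yA[e]) = 0$, respectively. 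Hence the middle term vanishes.

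The remaining claim $\Hom^{\bul}_{\grmod A}(M, A/yA) = 0$ (for $M$ with an $\XX$-filtration and $y \in \YY$) is handled by a symmetric induction on the length of the $\XX$-filtration of $M$, using the short exact sequence
\begin{equation*}
0 \to A/xA[d] \to M \to M'' \to 0
\end{equation*}
where $x\in\XX$, $d\in\Z$ and $M''$ has a shorter $\XX$-filtration; applying $\Hom^{\bul}_{\grmod A}(-, A/yA)$ and using the base-case identity from Lemma~\ref{lemma_AxAAxA} (with good set $\XX\cup\YY$, distinct elements $x \neq y$) together with the inductive hypothesis for $M''$ closes the induction.

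There is no real obstacle here beyond bookkeeping: all the work was already done in setting up the notion of good set and in Lemma~\ref{lemma_AxAAxA}, which supplies the crucial orthogonality of cyclic pieces indexed by distinct elements of a good set. The mild point to keep in mind is that we need the single good set $\XX\cup\YY$ (not merely that $\XX$ and $\YY$ are separately good) in order to invoke Lemma~\ref{lemma_AxAAxA} for a pair $(x,y)$ with $x\in\XX$ and $y\in\YY$; this is precisely what the hypothesis of the lemma gives.
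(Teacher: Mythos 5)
Your proof is correct and follows essentially the same route as the paper: induction on the lengths of the filtrations (the paper phrases it as induction on the total length), with the base case supplied by Lemma~\ref{lemma_AxAAxA} applied to distinct elements of the good set $\XX\cup\YY$, and the inductive step via the (long) exact sequences obtained by applying $\Hom_{\grmod A}$ to the short exact sequences coming from the filtrations. Your remark that the goodness of the union $\XX\cup\YY$, rather than of $\XX$ and $\YY$ separately, is what makes the base case work is exactly the right point to flag.
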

\begin{proof}
The proof is by induction in the total length of filtrations of $M$ and $N$. The base is by Lemma~\ref{lemma_AxAAxA}:
$\Hom_{\grmod A}(A/xA[i],A/yA[j])=0$ for any $x\in \XX,y\in\YY$ and $i,j\in\Z$. The induction step
follows from long exact sequences of $\Ext_{\grmod A}$ groups.
\end{proof}

Let $A$ be a free graded algebra.
For any ordered pair $P=(a,b)\in A^*\times A^*$ denote
$$P^+=(a^+,b^+):=(aba^8b^8,a^2b^2a^7b^7),\qquad P^-=(a^-,b^-):=(a^3b^3a^6b^6,a^4b^4a^5b^5).$$
For any pair  of elements in $A^*$ denote
$$T_{(a,b)}:=\langle M_a,M_b\rangle\subset \Perf A.$$

\begin{lemma}
\label{lemma_zero}
Let $A$ be a free algebra.
For any good pair $ P=(a,b)$ of homogeneous elements with $l(a)=l(b)>0$ we have
\begin{equation}
\label{eq_zero}
T_{ P}\supset T_{ P^+},T_{ P^-}, \quad T_{ P^+}\cap T_{ P^-}=0.
\end{equation}
\end{lemma}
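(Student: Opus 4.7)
The plan is to handle the two claims of (\ref{eq_zero}) separately. For the inclusion $T_P \supset T_{P^\pm}$, I note that each of the four monomials $aba^8b^8$, $a^2b^2a^7b^7$, $a^3b^3a^6b^6$, $a^4b^4a^5b^5$ is a product of copies of $a$ and $b$. Iterating Lemma~\ref{lemma_Mg}(1) then gives $M_x \in \langle M_a, M_b\rangle = T_P$ for each such $x$, from which both inclusions follow directly.

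For the disjointness $T_{P^+}\cap T_{P^-}=0$, the key structural input is Lemma~\ref{lemma_1423}: it guarantees that the set $\XX\cup\YY$, where $\XX=\{aba^8b^8, a^2b^2a^7b^7\}$ and $\YY=\{a^3b^3a^6b^6, a^4b^4a^5b^5\}$, is good and consists of four distinct elements. In particular $\XX\cap\YY=\emptyset$, and each of $\XX$, $\YY$ is good on its own, since any subset of a good set is visibly good by inspection of Definition~\ref{def_goodelement} (there are no new units, and the closure condition on factorizations $x=ab$, $y=ca$ only becomes weaker when one passes to a subset).

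Given this, I would proceed as follows. Let $M\in T_{P^+}\cap T_{P^-}$. By Proposition~\ref{prop_module}(1), $M$ is quasi-isomorphic to its cohomology graded module $H(M)$, which is uniquely determined up to graded isomorphism. Applying Lemma~\ref{lemma_Gdiag} to each of the two containments $M\in\langle M_x\rangle_{x\in\XX}$ and $M\in\langle M_x\rangle_{x\in\YY}$ yields that the single graded module $H(M)$ simultaneously admits an $\XX$-filtration and a $\YY$-filtration. Lemma~\ref{lemma_XY}, applied with $M=N=H(M)$ (using that $\XX\cap\YY=\emptyset$ and $\XX\cup\YY$ is good), then forces $\Hom_{\grmod A}(H(M),H(M))=0$. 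In particular $\id_{H(M)}=0$, so $H(M)=0$ and hence $M\cong 0$ in $\Perf A$.

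The main obstacle in this line of argument is the combinatorial verification that $\XX\cup\YY$ is good with all four elements distinct, which is precisely Lemma~\ref{lemma_1423} and requires the specific choice of exponents $(1,1,8,8)$, $(2,2,7,7)$, $(3,3,6,6)$, $(4,4,5,5)$ balancing occurrences of $a$ and $b$. Once that combinatorial fact is in hand, the rest of the argument is essentially a packaging of the $\XX$-filtration machinery developed in Section~\ref{section_Mx}.
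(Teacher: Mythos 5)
Your proposal is correct and follows essentially the same route as the paper: the inclusions come from the fact that $a^{\pm},b^{\pm}$ are monomials in $a,b$ (you invoke Lemma~\ref{lemma_Mg}(1) iteratively, the paper quotes Theorem~\ref{theorem_xprod}, which amounts to the same short exact sequences), and the disjointness is obtained exactly as in the paper via Proposition~\ref{prop_module}, Lemma~\ref{lemma_Gdiag}, Lemma~\ref{lemma_1423} and Lemma~\ref{lemma_XY} applied with $M=N$, forcing $\id_{H(M)}=0$.
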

\begin{proof}
By Lemma~\ref{lemma_1423} the sets $ P^+, P^-$ are good.
By Theorem~\ref{theorem_xprod} we have
$$M_{a^+},M_{b^+},M_{a^-},M_{b^-}\in \langle M_a,M_b\rangle$$
hence the inclusion in \eqref{eq_zero} holds.
Assume now that $M\in T_{ P^+}\cap T_{ P^-}$ for some $M\in\Perf A$. By Proposition~\ref{prop_module}, we can assume that $M$ is a graded $A$-module.
By Lemma~\ref{lemma_Gdiag}, $M$ has both $\{a^+,b^+\}$ and $\{a^-,b^-\}$-filtrations (see Definition~\ref{def_filt}).  It follows now from Lemma~\ref{lemma_XY} that $M=0$, since the set $\{a^+,b^+,a^-,b^-\}$ is good by Lemma~\ref{lemma_1423}.
\end{proof}

\begin{theorem}
\label{theorem_tree}
Let $A=\k\{x_1,\ldots,x_N\}$ be a free algebra with $N\ge 2$. Then there exists an infinite descending binary tree of thick triangulated finitely generated subcategories in $\Perf A$.
\end{theorem}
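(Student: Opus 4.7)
My plan is to build the tree by recursion on depth: for each finite binary word $\e$ I will construct a pair $P_\e = (a_\e, b_\e) \in A^*\times A^*$, starting from $P_\emptyset := (x_1, x_2)$ and using the operations $P \mapsto P^\pm$ defined just before Lemma~\ref{lemma_zero} to set $P_{\e 0} := P_\e^+$ and $P_{\e 1} := P_\e^-$. The associated subcategory is $T_\e := T_{P_\e} = \langle M_{a_\e}, M_{b_\e}\rangle$; it is thick triangulated and finitely generated by construction, and it is nonzero since $M_{a_\e}$ has nonzero cohomology $A/a_\e A$ (as $a_\e$ will not be a unit).

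The first step is to maintain, as $\e$ grows, the invariant that $\{a_\e, b_\e\}$ is a good set and $l(a_\e) = l(b_\e) > 0$. The base case $\e = \emptyset$ is Example~\ref{example_goodbad}, and the inductive step is an immediate consequence of Lemma~\ref{lemma_1423}, which asserts that the whole four-element set $\{a_\e^+, b_\e^+, a_\e^-, b_\e^-\}$ is good and that all four elements have common length $18\cdot l(a_\e)$; passing to either two-element subset preserves both properties. With the invariant available at every node, Lemma~\ref{lemma_zero} applies uniformly and provides the two local facts
\[ T_{\e 0},\, T_{\e 1} \subset T_\e \qquad \text{and} \qquad T_{\e 0} \cap T_{\e 1} = 0. \]

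From here the axioms of Definition~\ref{def_tree} fall out by tree bookkeeping. Iterating the first local inclusion gives $T_\mu \subset T_\e$ whenever $\mu$ is a descendant of $\e$. For incomparable $\e$ and $\delta$, taking their longest common prefix $\gamma$ places them below $\gamma 0$ and $\gamma 1$ respectively, so $T_\e \cap T_\delta \subset T_{\gamma 0} \cap T_{\gamma 1} = 0$; this also yields $T_\e \not\subset T_\delta$ in the incomparable case since $T_\e$ is nonzero. The only remaining point is to preclude inclusion of an ancestor into a strict descendant. If $\delta$ is a strict descendant of $\e$, I would apply Theorem~\ref{theorem_xprod} to the good set $\{a_\delta, b_\delta\}$ and the element $a_\e$: membership $M_{a_\e} \in T_\delta$ would force $a_\e = \lambda \prod_{i=1}^{n} y_i$ with $\lambda \in \k^*$ and $y_i \in \{a_\delta, b_\delta\}$, but for $n = 0$ this makes $a_\e$ a unit, contradicting $l(a_\e) > 0$, while for $n \ge 1$ it yields $l(a_\e) \ge l(a_\delta) = 18^{|\delta|-|\e|}\cdot l(a_\e) > l(a_\e)$. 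Either way we reach a contradiction, so $M_{a_\e} \in T_\e \setminus T_\delta$.

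I do not anticipate a genuine obstacle: the substantive content is already packaged into Lemmas~\ref{lemma_1423} and~\ref{lemma_zero} together with Theorem~\ref{theorem_xprod}, and the rest is the bookkeeping outlined above.
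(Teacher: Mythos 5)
Your proposal is correct and follows essentially the same route as the paper: iterate the $P\mapsto P^{\pm}$ construction starting from $(x_1,x_2)$, use Lemma~\ref{lemma_1423} to maintain goodness and equal lengths, and use Lemma~\ref{lemma_zero} for the inclusions and zero intersections. Your extra step ruling out inclusion of an ancestor into a strict descendant via Theorem~\ref{theorem_xprod} and the length count $l(a_\delta)=18^{|\delta|-|\e|}l(a_\e)$ is a welcome explicit verification of the strictness required by Definition~\ref{def_tree}, which the paper leaves implicit.
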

\begin{proof}
Starting  with $ P=(x_1,x_2)$,  we iterate the construction from Lemma~\ref{lemma_zero}.
For any sequence
$\e=(\e_1,\ldots,\e_n)$ where $\e_i\in\{+,-\}$ we define
$$ P^{\e_1\ldots\e_n}:=( P^{\e_1\ldots\e_{n-1}})^{\e_n}.$$
Applying  Lemma~\ref{lemma_1423}, we see that for any $\e$ the pair $ P^{\e}$ is good.
By Lemma~\ref{lemma_zero},  the categories $T_{ P^{\e}}$ organize into a binary tree in the sense of Definition~\ref{def_tree}: $T_{P^\e}\subset T_{P^\delta}$ if and only if $\e$ starts with $\delta$, and the categories from different branches of the tree intersect by zero.
$$\xymatrix{&& T_{ P} \ar@{-}[rd]\ar@{-}[ld]&&\\
& T_{ P^+} \ar@{-}[ld]\ar@{-}[d]&& T_{ P^-} \ar@{-}[rd]\ar@{-}[d]\\
 T_{ P^{++}} & T_{ P^{+-}} &\ldots & T_{ P^{-+}} & T_{ P^{--}}
}$$
\end{proof}

\section{Localizations of a free algebra and subcategories in $\Perf A$}
\label{section_localization}

\subsection{Localizations of noncommutative rings}

We recall here the definition and the construction of localization of a noncommutative graded ring. We refer to \cite{Co} or to \cite{CY} for the graded case.

Let $\Lambda$ be a graded ring.
Let $g\colon F_1\to F_0$ be in $\Mat*(\Lambda)$ and $\phi\colon \Lambda\to \Omega$ be a homomorphism of graded rings. We say that
$\phi$ \emph{inverts} $g$ if the scalar extension $F_1\otimes_{\Lambda} \Omega\xra{g\otimes 1}F_0\otimes_{\Lambda} \Omega$ is an isomorphism. We say that
$\phi$ \emph{inverts} a family $\SS\subset \Mat*(\Lambda)$ if $\phi$ inverts all $g\in \SS$.

By definition, a \emph{graded localization} (or a \emph{graded universal localization}) of $\Lambda$ by $\SS$ is a graded ring $\Lambda [\SS^{-1}]$ together with a homomorphism of graded rings $\lambda_\SS\colon \Lambda \to \Lambda [\SS^{-1}]$ inverting $\SS$ such that any graded ring homomorphism $\phi\colon \Lambda \to \Omega$ inverting $\SS$ factors uniquely through $\lambda_\SS$:
$$\xymatrix{\Lambda \ar[rr]^{\lambda_\SS} \ar[rd]_{\phi} && \Lambda [\SS^{-1}] \ar@{-->}[ld] \\ & \Omega & }$$

By definition, localization is unique up to an isomorphism. Localization can be constructed explicitly as follows.
For any $g\colon F_1\to F_0$ in $\SS$ choose  bases of free graded modules $F_1, F_0$. Then
$F_1=\oplus_{j=1}^n \Lambda[c_j], F_0=\oplus_{i=1}^m \Lambda[d_i]$ and $g$ is given by a matrix $G\in Mat_{m\times n}(\Lambda)$ where $G_{ij}\in \Lambda_{d_i-c_j}$.
Consider the $n\times m$ matrix $t^g$ whose components are homogeneous formal variables $t^g_{ij}$,
$i=1\ldots n, j=1\ldots m$ with $\deg t^g_{ij}=-\deg G_{ji}=c_i-d_j$.  Consider the  graded ring $\Lambda\{t^g_{ij}\}_{g\in \SS}$ freely generated over $\Lambda$ by all variables $t^g_{ij}$. Take the quotient  of $\Lambda\{t^g_{ij}\}_{g\in \SS}$ modulo all relations of the form
$G\cdot t^g=1, t^g\cdot G=1$ (where $1$ denotes the square identity matrix of appropriate size and the given matrix relations read  as a series of relations on entries). Then this quotient is the localization $\Lambda [\SS ^{-1}]$ of $\Lambda$ by $\SS$.

The localization $\Lambda[\SS^{-1}]$ can be the zero ring; it is non-zero if and only if there exists at least one homomorphism $\Lambda\to \Omega$ to a non-zero ring $\Omega$, inverting $\SS$.

As a special case, one can consider a homomorphism of free graded modules of rank one. Such a homomorphism $g\colon \Lambda[c]\to \Lambda[d]$ is given by multiplication  by an element $a\in \Lambda_{d-c}$. A homomorphism $\phi\colon \Lambda\to\Omega$ of graded rings inverts $g$ if and only if $\phi(a)$ is invertible in~$\Omega$. Thus we get localizations of graded rings over families of homogeneous elements.

\medskip
In the commutative world only square matrices can be inverted; a square matrix is inverted iff its determinant is inverted; an element can be inverted (in a nonzero ring) iff it is not nilpotent. In the noncommutative world the questions are much more complicated. First, non-square matrices can be invertible:

\begin{example}
Let $A=\k\{x,y\}$ be the free algebra in two variables. Let $G=(x\,\,y)$ be a $1\times 2$ matrix over $A$. Then the localization $A[G^{-1}]$ is the quotient algebra
$$A\{t_1,t_2\}/(xt_1+yt_2-1, t_1x-1, t_1y, t_2x, t_2y-1).$$
Let $C=\End_{\k}(V)$ where $V$ is an infinite-dimensional $\k$-vector space. There is an isomorphism $V\cong V\oplus V$. Therefore, there is  an isomorphism 
$$C=\Hom(V,V)\cong \Hom(V,V\oplus V)=\Hom(V,V)\oplus \Hom(V,V)= C\oplus C,$$ 
which is an isomorphism of right $C$-modules. It follows that  there exists an invertible $1\times 2$ matrix over $C$. Consequently (as $A$ is free), there exists a homomorphism $A\to C$ sending $G$ to an invertible matrix. Thus $A[G^{-1}]$ is a non-zero ring.
\end{example}

The localization of a ring without zero divisors can have zero divisors as the following example demonstrates.
\begin{example}
Let $A=\k\{x,y\}$ be the free algebra.
Let
$$e:=y(xy)^{-1}x\in A[(xy)^{-1}].$$

Then $e\ne 1$ in $A[(xy)^{-1}]$. Otherwise we have $x (y(xy)^{-1})=1$ and $(y(xy)^{-1})x=1$, hence $x$ is invertible in $A[(xy)^{-1}]$, what contradicts  Theorem~\ref{theorem_invert}. Thus $e\ne 1$.

But $x(e-1)=x(y(xy)^{-1}x-1)=x-x=0$ and similarly $(e-1)y=0$. Hence $x,y$ and $e-1$ are zero divisors in $A[(xy)^{-1}]$.

Moreover, $e^2=e$ in $A[(xy)^{-1}]$.
\end{example}

The localization functor in general rings is not  exact, in contrast to commutative case.

\begin{lemma}
\label{lemma_g1g2g3}
Let $a_1,a_2,a_3\in \Lambda $ be homogeneous elements of a graded ring $\Lambda $. Then one has isomorphisms of graded rings
\begin{align*}
\Lambda [(a_1a_2a_1)^{-1}]&\cong \Lambda [a_1^{-1},a_2^{-1}];\\
\Lambda [(a_1a_2)^{-1},(a_2a_3)^{-1}]&\cong \Lambda [a_1^{-1},a_2^{-1},a_3^{-1}].
\end{align*}
\end{lemma}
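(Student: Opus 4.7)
The plan is to prove both isomorphisms via the universal property of graded universal localization: construct graded ring homomorphisms in both directions and verify, using uniqueness in the universal property, that they are mutually inverse. The key point is to exhibit, on each side, two-sided inverses for the relevant elements — the noncommutative subtlety being that one must provide both a right and a left inverse, not just one of them.

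For the first isomorphism, one direction is immediate: the canonical map $\Lambda \to \Lambda[a_1^{-1},a_2^{-1}]$ sends $a_1a_2a_1$ to a product of invertible elements, hence invertible, so by the universal property it factors through a unique graded homomorphism $\Lambda[(a_1a_2a_1)^{-1}] \to \Lambda[a_1^{-1},a_2^{-1}]$. For the converse, let $c \in \Lambda[(a_1a_2a_1)^{-1}]$ denote the two-sided inverse of $a_1a_2a_1$. Then I would observe that
$$a_1 \cdot (a_2a_1c) = 1 = (ca_1a_2)\cdot a_1,$$
so $a_1$ has both a right and a left inverse and is therefore invertible (with these two elements necessarily equal). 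Similarly, setting $b := a_1ca_1$, one checks $a_2 b = a_2a_1 c a_1 = (a_1a_2a_1c)\cdot$ (after using $a_1^{-1}$) $ = 1$ and $ba_2 = 1$, so $a_2$ is invertible as well. The universal property then yields a graded homomorphism $\Lambda[a_1^{-1},a_2^{-1}] \to \Lambda[(a_1a_2a_1)^{-1}]$.

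For the second isomorphism, again one direction is clear because $a_1a_2$ and $a_2a_3$ are products of invertibles in $\Lambda[a_1^{-1},a_2^{-1},a_3^{-1}]$. For the converse, I would show that in $\Lambda[(a_1a_2)^{-1},(a_2a_3)^{-1}]$ each of $a_1,a_2,a_3$ becomes invertible. The central observation is that $a_2$ acquires a left inverse $(a_1a_2)^{-1}a_1$ from the first relation and a right inverse $a_3(a_2a_3)^{-1}$ from the second, so it is two-sided invertible. Once $a_2$ is invertible, the equalities
$$a_1 = (a_1a_2)\cdot a_2^{-1}, \qquad a_3 = a_2^{-1}\cdot (a_2a_3)$$
express $a_1$ and $a_3$ as products of invertible elements. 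The universal property then delivers a homomorphism $\Lambda[a_1^{-1},a_2^{-1},a_3^{-1}] \to \Lambda[(a_1a_2)^{-1},(a_2a_3)^{-1}]$.

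Finally, in each isomorphism both composites are graded endomorphisms of a localization ring whose restriction to $\Lambda$ agrees with the canonical map $\lambda$; by the uniqueness clause in the universal property of $\lambda$, each composite equals the identity. Thus the two constructed maps are mutually inverse. There is no real obstacle here — the argument is formal once one keeps track of the noncommutative requirement that inverses be two-sided, which is guaranteed in both statements by the symmetric role of $a_1$ in $a_1a_2a_1$ (first formula) and by the overlap of $a_2$ between $a_1a_2$ and $a_2a_3$ (second formula).
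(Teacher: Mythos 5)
Your proposal is correct and follows essentially the same route as the paper: the paper's proof also invokes the universal property of graded localizations together with the two observations that $a_1a_2a_1$ invertible forces $a_1,a_2$ invertible, and $a_1a_2,a_2a_3$ invertible forces $a_1,a_2,a_3$ invertible (via the same left-inverse/right-inverse trick for $a_2$). You merely spell out the explicit inverses and the uniqueness argument that the paper leaves implicit.
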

\begin{proof}
This follows from the universal property of graded localizations and from the following observations: (1) an element $a_1a_2a_1$ in a ring is invertible if and only if both $a_1$ and $a_2$ are invertible; (2) elements $a_1a_2,a_2a_3$ are invertible if and only if all $a_1$, $a_2$ and $a_3$ are invertible.
\end{proof}

\subsection{Subcategories as kernels of scalar extensions}

Let $A$ be a free graded algebra. Let $T=\langle M_g\rangle _{g\in \SS}$ be the thick triangulated subcategory of $\Perf A$ classically generated by a family of dg modules $M_g$ for some set of homomorphisms $\SS\subset \Mat*(A)$. Denote by $\overline{T}\subset D(A)$ the localizing subcategory generated by the same family of dg modules $M_g, g\in\SS$ (i.e. $\overline{T}$ is the minimal triangulated subcategory of $D(A)$ which is closed under arbitrary direct sums and contains each object $M_g$, $g\in \SS$). By Theorem 2.1 in \cite{Ne2} the Verdier localization functor
$$q\colon D(A)\to D(A)/\overline{T}$$
maps $\Perf A$ to the subcategory $(D(A)/\overline{T})^c$ of compact objects and induces the full and faithful functor
$$e\colon (\Perf A)/T\to (D(A)/\overline{T})^c$$
which is the idempotent completion.

On the other hand the set $\SS\subset \Mat*(A)$ gives rise to the universal localization ring homomorphism
$$\lambda _{\SS}\colon A\to A[\SS ^{-1}]$$
and hence induces the functor of extension of scalars
$$\lambda _{\SS }^*\colon D(A)\to D(A[\SS ^{-1}]),\quad \Perf A\to \Perf
A[\SS ^{-1}]$$

The next proposition and its proof are essentially taken from \cite{NR}.

\begin{prop}
\label{equality of localizations} In the above setup there exists a triangulated functor $\overline{t}\colon D(A)/\overline{T}\to D(A[\SS ^{-1}])$ such that the diagram of functors
$$\xymatrix{D(A)\ar[rrrr]^{\lambda_\SS^*} \ar[rrd]_q &&&& D( A[\SS^{-1}]) \\
&&D(A)/\overline{T}\ar[rru]_{\overline{t}}&&}$$
commutes. Moreover, $\overline{t}$ is an equivalence.

This induces a commutative diagram of functors
$$\xymatrix{\Perf A\ar[rrrr]^{\lambda_\SS^*} \ar[rrd]_q &&&& \Perf A[\SS^{-1}], \\
&&(\Perf A)/T\ar[rru]_{t}&&}$$
where $t$ is the idempotent completion. In particular, $T=\ker \lambda ^*_{\SS}$.
\end{prop}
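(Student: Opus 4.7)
The plan is to follow the Neeman--Ranicki strategy: first produce the functor $\overline{t}$ by a universal property, then construct a candidate quasi-inverse via restriction of scalars, and finally verify the two adjunction morphisms are isomorphisms. For the second diagram the passage to compact objects is essentially formal.

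First I would observe that the scalar extension $\lambda_\SS^*\colon D(A)\to D(A[\SS^{-1}])$ is a triangulated functor which preserves arbitrary coproducts, being the left adjoint of the restriction of scalars functor $(\lambda_\SS)_*$. By the very definition of universal localization, each $g\in\SS$ becomes invertible after tensoring with $A[\SS^{-1}]$, so $\lambda_\SS^*(M_g)\cong \mathrm{Cone}(g\otimes_A A[\SS^{-1}])=0$. Hence $\ker\lambda_\SS^*$ is a localizing subcategory of $D(A)$ containing every $M_g$, so $\overline{T}\subseteq\ker\lambda_\SS^*$. The universal property of the Verdier quotient $q\colon D(A)\to D(A)/\overline{T}$ then produces the required triangulated factorization $\overline{t}$.

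Next I would build a candidate right adjoint to $\overline{t}$. For any $X\in D(A[\SS^{-1}])$ and any $g\in\SS$, adjunction gives
$$\Hom_{D(A)}(M_g,(\lambda_\SS)_*X)\cong \Hom_{D(A[\SS^{-1}])}(\lambda_\SS^*M_g,X)=0,$$
so $(\lambda_\SS)_*$ lands in $\overline{T}^{\perp}\subset D(A)$. Since the natural projection $q$ identifies $\overline{T}^\perp$ with $D(A)/\overline{T}$ (Bousfield localization in a compactly generated setting, \cite{Ne2}), the composite $q\circ(\lambda_\SS)_*$ provides a functor $\overline{s}\colon D(A[\SS^{-1}])\to D(A)/\overline{T}$ which is right adjoint to $\overline{t}$ by construction.

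To finish the first diagram I need to verify that the unit and counit of $(\overline{t},\overline{s})$ are isomorphisms. The counit is iso if and only if $(\lambda_\SS)_*$ is fully faithful, equivalently $\lambda_\SS\colon A\to A[\SS^{-1}]$ is a ring epimorphism; this is a standard property of universal localizations (\cite{Co}). The unit $\mathrm{id}\to \overline{s}\,\overline{t}$ is an iso iff for every $M\in D(A)$ the cone of $M\to (\lambda_\SS)_*\lambda_\SS^* M$ lies in $\overline{T}$. Both sides commute with arbitrary coproducts and shifts, and since $A$ classically generates $D(A)$, it suffices to check the assertion for $M=A$; this is the main technical obstacle. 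Here I would invoke the explicit Cohn/Schofield construction of $A[\SS^{-1}]$ given in Section~5.1, which builds it as a homotopy colimit obtained by iteratively adjoining inverses to the maps in $\SS$, so that $\mathrm{Cone}(A\to A[\SS^{-1}])$ is built out of shifts of the $M_g$, $g\in\SS$, and therefore lies in $\overline{T}$. This is where I expect to have to follow the Neeman--Ranicki argument \cite{NR} most carefully.

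Finally, for the second diagram I would restrict to compact objects. The functor $\lambda_\SS^*$ sends $\Perf A$ to $\Perf A[\SS^{-1}]$ since it is a scalar extension, and by \cite[Thm.~2.1]{Ne2} the Verdier quotient $q$ restricts to a functor $\Perf A\to (D(A)/\overline{T})^c$ whose image classically generates the target and which realises $(D(A)/\overline{T})^c$ as the idempotent completion of $(\Perf A)/T$. Since $\overline{t}$ is an equivalence of the ambient compactly generated categories, it restricts to an equivalence on compact objects $\overline{t}^c\colon (D(A)/\overline{T})^c\xra{\sim}\Perf A[\SS^{-1}]$, and the composite $t=\overline{t}^c\circ e$ is therefore the idempotent completion of $(\Perf A)/T$. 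The identification $T=\ker\lambda_\SS^*$ on $\Perf A$ follows at once: an object $M\in\Perf A$ satisfies $\lambda_\SS^*M=0$ iff $\overline{t}(q(M))=0$ iff $q(M)=0$ iff $M\in T$.
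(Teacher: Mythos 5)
Your framework (factor $\lambda_\SS^*$ through $q$, then compare with $D(A[\SS^{-1}])$ via the restriction--extension adjunction) is the natural Neeman--Ranicki-style plan, and the first step ($\overline{T}\subset\ker\lambda_\SS^*$, hence $\overline{t}$ exists) as well as the final passage to compact objects via \cite{Ne2} agree with the paper. But the core claim --- that $\overline{t}$ is an equivalence --- has two genuine gaps. First, your counit step: full faithfulness of the \emph{derived} restriction functor $(\lambda_\SS)_*\colon D(A[\SS^{-1}])\to D(A)$ is not equivalent to $\lambda_\SS$ being a ring epimorphism; it is equivalent to $\lambda_\SS$ being a \emph{homological} epimorphism, i.e.\ $A[\SS^{-1}]\otimes^L_A A[\SS^{-1}]\simeq A[\SS^{-1}]$, which in addition to the ring-epimorphism condition requires $\Tor^A_i(A[\SS^{-1}],A[\SS^{-1}])=0$ for $i>0$. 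The universal property recorded in \cite{Co} gives only the ring epimorphism, so the cited ``standard property'' does not deliver what you need. Second, and more seriously, your unit step at $M=A$ asserts that $\mathrm{Cone}(A\to A[\SS^{-1}])$ lies in $\overline{T}$ because the generators-and-relations construction of $A[\SS^{-1}]$ ``builds it out of shifts of the $M_g$.'' It does not: adjoining the matrices $t^g$ and imposing $G\cdot t^g=t^g\cdot G=1$ is a purely ring-theoretic construction and provides no filtration of $\mathrm{Cone}(A\to A[\SS^{-1}])$ in $D(A)$ by shifts of the $M_g$. If this claim followed from the construction alone it would hold over an arbitrary ring, whereas universal localizations of general rings need not be stably flat and the analogue of the proposition fails for them; so both of your missing steps are exactly where the freeness (heredity) of $A$ must enter, and your argument never uses it.

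By contrast, the paper's proof is organized to avoid assuming stable flatness: it uses the Bousfield right adjoint $r$ of $q$ together with formality of dg $A$-modules (Proposition~\ref{prop_module}, where freeness of $A$ is used), identifies the graded ring $B=\End^{\bul}_{D(A)/\overline{T}}(q(A))$ with $rq(A)$ as a graded $A$-module (Lemma~\ref{lemma_keyNeRa}), proves that $\phi\colon A\to B$ itself satisfies the universal property of $A[\SS^{-1}]$ (Lemma~\ref{lemma_universal}), and then concludes that $\overline{t}$ is an equivalence by a standard d\'evissage with the compact generators $q(A)$ and $A[\SS^{-1}]$. Your proposal is not complete until the unit statement (equivalently, the homological-epimorphism property of $\lambda_\SS$) is actually proved; that is essentially the entire content of the proposition, so as written the proof has a genuine gap.
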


\begin{proof} The second part of the proposition follows from the first one in view of Theorem 2.1 in \cite{Ne2}.

To show the existence of the functor $\overline{t}$ it suffices to prove that $\overline{T}\subset \ker \lambda ^*_{\SS}$. Note that $T\subset \ker \lambda ^*_{\SS}$ by the definition of the ring $A[\SS ^{-1}]$. Also note that the functor $\lambda _{\SS}^*$ preserves arbitrary direct sums. Hence $\overline{T}\subset \ker \lambda _{\SS}^*$. This proves the first assertion. It remains to show that $\overline{t}$ is an equivalence. This will take a few steps.

First, recall the isomorphism of graded algebras
$A=\End ^\bullet _{D(A)}(A)$ which is given by $a\mapsto (l_a\colon A\to A)$, the left multiplication by $a$.
The functor $q$ induces the homomorphism of graded rings
$$\phi \colon A=\End ^\bullet _{D(A)}(A)\stackrel{q}{\to}
\End ^\bullet _{D(A)/\overline{T}}(q(A))=:B$$
This gives $B$ the structure of a graded $A$-module (via $b\cdot a:=b\circ q (l_a)$), hence we may consider $B$ as an object in $D(A)$. Note that the ring homomorphism $\phi$ above is also a morphism of graded $A$-modules.
Recall (see \cite[Lemma 1.7]{Ne2}) that the localization functor $q\colon D(A)\to D(A)/\overline{T}$ has a full and faithful right adjoint $r\colon D(A)/\overline{T}\to D(A)$ (here it is important that we have passed to unbounded derived categories). Denote by $\eta _A\colon A\to rq(A)$ the corresponding adjunction morphism in $D(A)$. By Proposition~\ref{prop_module}, dg module $rq(A)$ is quasi-isomorphic to its cohomology graded module, therefore we can assume that $rq(A)$ is just a graded $A$-module.
Moreover, 
we have
\begin{equation}
\label{eq_ArqA}
\Hom^{\bul}_{D(A)}(A,rq(A))=\Hom^{\bul}_{\grmod A}(A,rq(A))
\end{equation}
since $A$ is h-projective.

\begin{lemma} 
\label{lemma_keyNeRa} In the above notation there exists a morphism $\psi \in \Hom_{\grmod A}(B,rq(A))$ such that $\psi\circ \phi=\eta _A$. Moreover, $\psi$ is an isomorphism.
\end{lemma}

\begin{proof} 
We define $\psi$ as the composition of the maps
$$B=\Hom ^\bullet _{D(A)/\overline{T}}(q(A),q(A))\xra{\psi _1} \Hom ^\bullet _{D(A)}(A,rq(A))=\Hom ^\bullet _{\grmod A}(A,rq(A))\xra{\psi _2}  rq(A),$$
where $\psi_1$ is given by adjunction, $\psi _1(b)=r(b)\circ \eta _A$ and $\psi_2(f)=f(1)$. Clearly
$\psi_1,\psi_2$ and $\psi$ are isomorphisms of graded vector spaces. Let us prove that $\psi$ is a morphism of right $A$-modules. Because $\eta$ is a morphism of functors we have the commutative diagram in~$D(A)$
\begin{equation}
\label{eq_naturality}
\xymatrix{
A \ar[r]^{l_a} \ar[d]^{\eta_A} &  A \ar[d]^{\eta_A}\\
rq(A) \ar[r]^{rq(l_a)}  & rq(A).
}
\end{equation}
Let $b\in B$, $a\in A$. We have
\begin{align*}
\psi (b\cdot a) & =  (r(b\cdot a)\circ \eta _A)(1)\quad \text{by definition of $\psi$}\\
          & =  (r(b\circ q(l_a))\circ \eta _A)(1)\quad \text{by definition of $b\cdot a$}\\
          & =  (r(b)\circ rq(l_a)\circ \eta _A)(1)\quad \text{since $r$ is a functor}\\
          & =  (r(b)\circ \eta _A\circ l_a)(1)\quad \text{by \eqref{eq_naturality}}\\
          & =  (r(b)\circ \eta _A)(l_a(1))\quad\text{since $r(b)\circ \eta _A$ and $l_a$ are homomorphisms  of modules by \eqref{eq_ArqA}}\\
          & =  (r(b)\circ \eta _A)(1\cdot a)\\
          & =  ((r(b)\circ \eta _A)(1))\cdot a\quad \text{as $r(b)\circ \eta_A$ is a homomorphism of $A$-modules}\\
          & =  \psi (b)\cdot a\quad\text{by putting $a=1$ in the above equalities.}
\end{align*}

It remains to show that $\psi\circ \phi=\eta _A$. Because all the maps $\phi, \psi, \eta_A$ are morphisms of right $A$-modules it suffices to note that
$$\psi(\phi(1))=\psi(1_{q(A)})=(r(1_{q(A)})\circ \eta _A)(1)=\eta _A(1).$$
This proves Lemma \ref{lemma_keyNeRa}.
\end{proof}

The functor $\overline{t}$ maps $q(A)\in D(A)/\overline{T}$ to $A[\SS ^{-1}]\in D(A[\SS ^{-1}])$. This gives the graded ring homomoprhism
$$\mu \colon B=\End ^\bullet (q(A))\to \End ^\bullet (A[\SS ^{-1}])=A[\SS ^{-1}]$$
such that the diagram
\begin{equation}\label{diag comm}
\xymatrix{A\ar[rrrr]^{\lambda_\SS} \ar[rrd]_{\phi} &&&& A[\SS^{-1}].\\ && B \ar[rru]_{\mu }&&}
\end{equation}
commutes.

\begin{lemma}
\label{lemma_universal} 
The homomorphism $\mu$ is an isomorphism.
\end{lemma}

\begin{proof} It suffices to check that the homomorphism $\phi \colon A\to B$ is the universal localization with respect to the set $\SS$. That is given a homomorphism of graded ring $f\colon A\to C$ which inverts the set $\SS$ we need to show that there exists a unique ring homomorphism $f'\colon B\to C$ such that $f'\cdot \phi =f$.

Since $f$ inverts $\SS$ the induced functor $f^*\colon D(A)\to D(C)$ annihilates all modules $M_g$ for $g\in \SS$. Hence $T\subset \ker f^*$. Also $f^*$ commutes with direct sums, so $\overline{T}\subset \ker f^*$. Therefore the functor $f^*$ factors as \begin{equation}
\label{eq_fFq}
f^*=w\cdot q
\end{equation} 
for some functor $w\colon D(A)/\overline{T}\to D(C)$. Since $w(q(A))=C$ this gives the required ring homomorphism
$$f'\colon B=\End ^\bullet (q(A))\stackrel{w}{\to} \End ^\bullet (C)=C$$
such that $f'\cdot \phi =f$. It remains to prove the uniqueness of $f'$.

We consider $B$ and $C$ as graded right $A$-modules  via the homomorphisms
$\phi \colon A\to B$ and $f\colon A\to C$ respectively. A ring homomorphism $f'\colon B\to C$ such that $f'\cdot \phi =f$ is in particular a homomorphism of right graded $A$-modules. Hence it suffices to prove that the map induced by $\phi$:
$$ \Hom ^\bullet _{D(A)}(B,C)\to \Hom ^\bullet _{D(A)}(A,C)$$
is bijective. By Lemma \ref{lemma_keyNeRa} the morphism $\phi \in \Hom _{D(A)}(A,B)$ is isomorphic to the adjunction morphism
$$\eta _A\colon A\to rq(A)$$
The cone $Cone(\eta _A)$ of the adjunction map $\eta _A$ lies in the kernel $\overline{T}$ of the localization functor~$q$. Hence by \eqref{eq_fFq} $Cone(\eta _A)$ also lies in the kernel of the functor $f^*\colon D(A)\to D(C)$. It follows by standard adjunction that 
$$\Hom ^\bullet _{D(A)}(Cone (\eta _A),C)=\Hom ^\bullet _{D(C)}(f^* Cone (\eta _A),C)=0,$$ 
and so the map induced by $\eta_A$
$$\Hom ^\bullet _{D(A)}(rq(A),C)\to \Hom ^\bullet _{D(A)}(A,C)$$
is bijective. This proves the lemma.
\end{proof}

Now we can complete the proof of Proposition~\ref{equality of localizations}. It remains to show that the functor
$$\overline{t}\colon D(A)/\overline{T}\to D(A[\SS ^{-1}])$$
is an equivalence.

First we claim that $\overline{t}$ preserves arbitrary direct sums.
Indeed, we know that $q$ and $\lambda ^*_{\SS}$ preserve direct sums ($q$ preserves direct sums by Lemma 1.5 in \cite{NeBo} since $\overline{T}$ is localizing). Also the functor $q$ is essentially surjective, and the statement follows since $\overline t\circ q\cong \lambda ^*_{\SS}$.
The categories $D(A)/\overline{T}$ and $D(A[\SS ^{-1}])$ have  compact generators $q(A)$ and $A[\SS ^{-1}]$ respectively and by Lemma \ref{lemma_universal} the functor $\overline{t}$ induces the ring isomorphism
$$\mu \colon \End^\bullet (q(A))\stackrel{\sim}{\to }\End ^\bullet (A[\SS ^{-1}])$$

We now prove that $\overline{t}$ is full and faithful. Let
$$\PP =\{X\in D(A)/\overline{T}\mid \overline{t}\colon \Hom ^\bullet (q(A),X)\to
\Hom ^\bullet (\overline{t}q(A),\overline{t}X)\quad \text{is an isomorphism}\}$$
Then $\PP$ is a full triangulated subcategory of $D(A)/\overline{T}$ which contains $q(A)$ (by Lemma~\ref{lemma_universal}). Since the objects $q(A)$ and $\overline{t}q(A)$ are compact and the functor $\overline{t}$ preserves direct sums, it follows that $\PP$ is closed under direct sums. Hence $\PP=D(A)/\overline{T}$.

Now define
$$\QQ =\{Y\in D(A)/\overline{T}\mid \overline{t}\colon \Hom ^\bullet (Y,X)\to
\Hom ^\bullet (\overline{t}Y,\overline{t}X)\quad \text{is an isomorphism for all $X\in D(A)/\overline{T}$}\}$$
This is a full triangulated subcategory of $D(A)/\overline{T}$ which contains $q(A)$ and is closed under direct sums (because $\overline{t}$ preserves direct sums). Thus $\QQ=D(A)/\overline{T}$ and so the functor $\overline{t}$ is full and faithful. Finally notice that the essential image of $\overline{t}$ contains the compact generator $A[\SS ^{-1}]$ and is closed under arbitrary direct sums. So $\overline{t}$ is essentially surjective.
This finishes the proof of Proposition \ref{equality of localizations}.
\end{proof}

\begin{remark}
Proposition~\ref{equality of localizations} can also be deduced from \cite[Prop. 3.5]{CY}. 
In the setup of this Section, let $T^\perp\subset D(A)$ be the right orthogonal subcategory. Then \cite[Prop. 3.5]{CY} states that the scalar restriction functor $D(A[\SS^{-1}])\to D(A)$ restricts to an equivalence $D(A[\SS^{-1}])\xra\sim T^\perp$.
Since $T\subset D(A)$ is a localizing compactly generated subcategory, we get a semi-orthogonal decomposition $D(A)=\langle T^\perp,T\rangle$ and our Proposition~\ref{equality of localizations} follows.

Note that the scalar restriction functor $D(A[\SS^{-1}])\to D(A)$ does not preserve compact objects and thus \cite[Prop. 3.5]{CY} has no analogs for the categories $\Perf$. 
\end{remark}

\medskip
\begin{definition} Let $\phi \colon \Lambda \to \Omega$ be a homomorphism of graded $\k$-algebras. Consider the extension of scalars functor
$\phi ^*\colon \Perf \Lambda \to \Perf \Omega$ and put
$T_\phi :=\ker \phi ^*$. It is a thick triangulated subcategory
in $\Perf \Lambda$.
\end{definition}

\begin{corollary}
\label{cor_tphi}
Let $A$ be a free graded algebra. Then any full thick triangulated subcategory in $\Perf A$ is of the form $T_{\phi}$ for some graded algebra homomorphism $\phi\colon A\to B$.
\end{corollary}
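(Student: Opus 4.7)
The plan is to reduce Corollary \ref{cor_tphi} directly to two results already established in the paper: Corollary \ref{cor_module}, which says every object of $\Perf A$ is isomorphic to some $M_g$ with $g\in\Mat*(A)$, and Proposition \ref{equality of localizations}, which identifies $\langle M_g\rangle_{g\in\SS}$ with the kernel of the scalar extension along the universal localization $\lambda_\SS\colon A\to A[\SS^{-1}]$.

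First, given a full thick triangulated subcategory $T\subset \Perf A$, I would manufacture a set of homomorphisms generating $T$ by setting
$$\SS := \{\, g\in\Mat*(A)\mid M_g\in T\,\}.$$
Each $M_g$ with $g\in\SS$ lies in $T$ by construction, so since $T$ is thick and triangulated, $\langle M_g\rangle_{g\in\SS}\subset T$. For the reverse inclusion, any $M\in T$ is, by Corollary \ref{cor_module}, isomorphic to some $M_g$ with $g\in\Mat*(A)$; but then the defining condition of $\SS$ forces $g\in\SS$, and hence $M\in\langle M_g\rangle_{g\in\SS}$. Thus $T=\langle M_g\rangle_{g\in\SS}$, which exhibits $T$ in the form to which Proposition \ref{equality of localizations} applies.

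Second, I would apply Proposition \ref{equality of localizations} to the family $\SS$. The last sentence of that proposition directly asserts $T=\ker\lambda^*_\SS$, where $\lambda_\SS\colon A\to A[\SS^{-1}]$ is the universal localization. Taking $\phi:=\lambda_\SS$ and $B:=A[\SS^{-1}]$, this is exactly the statement that $T=T_\phi$, completing the proof.

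There is no real obstacle beyond verifying the two inclusions in $T=\langle M_g\rangle_{g\in\SS}$; all substantive content has already been absorbed into Proposition \ref{equality of localizations}. The only thing worth emphasising is that the construction of $\SS$ is genuinely tautological, i.e.\ one just lets $\SS$ be as large as possible, and the freeness (and hence hereditary graded coherence) of $A$ ensures via Corollary \ref{cor_module} that this tautological choice already classically generates $T$.
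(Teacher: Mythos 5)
Your proposal is correct and follows essentially the same route as the paper: define $\SS=\{g\in\Mat*(A)\mid M_g\in T\}$, use Corollary~\ref{cor_module} to see $T=\langle M_g\rangle_{g\in\SS}$, and then invoke Proposition~\ref{equality of localizations} to conclude $T=T_{\lambda_\SS}$ with $\phi=\lambda_\SS$ and $B=A[\SS^{-1}]$. The only difference is that you spell out the two inclusions which the paper dismisses as ``clear''.
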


\begin{proof} By  Corollary~\ref{cor_module} every object in $T$ is isomorphic to $M_g$ for some $g\in \Mat*(A)$ (Definition \ref{m-g}). Let
$$\SS =\{g\in \Mat*(A)\mid M_g\in T\}$$
Then $T=\langle M_g\rangle_{g\in \SS}$ (clear) and by Proposition~\ref{equality of localizations}
$T=T_{\lambda_\SS}$ for the localization homomorphism $\lambda_\SS\colon A\to A[\SS^{-1}]$.
\end{proof}

\begin{remark}
Note that $\phi$ is not uniquely defined by $T_{\phi}$.
\end{remark}

\begin{corollary}
\label{cor_G1G2}
Let $A$ be a free graded algebra, and let $\SS\subset \Mat*(A)$ be a family of homomorphisms. Then for any  $x\in \Mat*(A)$ we have  $M_{x}\in\langle M_g\rangle_{g\in \SS}$ if and only if $x$ is invertible over $A[\SS^{-1}]$.
\end{corollary}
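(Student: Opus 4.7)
The plan is to deduce this directly from Proposition~\ref{equality of localizations}, which identifies the thick subcategory $T = \langle M_g\rangle_{g\in \SS}$ with the kernel of the scalar extension functor $\lambda_\SS^* \colon \Perf A \to \Perf A[\SS^{-1}]$. First I would state the reformulation: $M_x \in \langle M_g\rangle_{g\in \SS}$ if and only if $\lambda_\SS^*(M_x) \cong 0$ in $\Perf A[\SS^{-1}]$. The ``only if'' direction is exactly the assertion that $T \subset \ker \lambda_\SS^*$, which was already established in the proof of Proposition~\ref{equality of localizations}. The ``if'' direction uses that $t$ is fully faithful (in fact an idempotent completion), so objects of $\Perf A$ killed by $\lambda_\SS^*$ are killed by $q$, hence lie in $T$.

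Next I would unpack what $\lambda_\SS^*(M_x) \cong 0$ means concretely. Writing $x \colon F_1 \to F_0$ for the homomorphism of finitely generated free graded $A$-modules, the module $M_x$ is by definition the cone $\mathrm{Cone}(F_1 \xrightarrow{x} F_0)$. Since the extension of scalars functor is triangulated and commutes with the cone construction, we have
\[
\lambda_\SS^*(M_x) \cong \mathrm{Cone}\bigl(F_1 \otimes_A A[\SS^{-1}] \xrightarrow{x \otimes 1} F_0 \otimes_A A[\SS^{-1}]\bigr).
\]
This cone vanishes in $\Perf A[\SS^{-1}]$ precisely when $x \otimes 1$ is an isomorphism, which is by definition the statement that the homomorphism $\lambda_\SS$ inverts $x$, i.e.\ $x$ is invertible over $A[\SS^{-1}]$.

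Combining the two steps gives the equivalence. There is essentially no obstacle here beyond invoking Proposition~\ref{equality of localizations}; the rest is the tautological observation that inverting a morphism of free modules after base change is the same as killing its cone. The only small point worth being careful about is that $M_x$ is defined as a cone of a possibly non-injective morphism, but this does not affect the argument since the cone construction is compatible with base change regardless.
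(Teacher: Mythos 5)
Your proposal is correct and follows essentially the same route as the paper: invoke Proposition~\ref{equality of localizations} to identify $\langle M_g\rangle_{g\in\SS}$ with $\ker\lambda_\SS^*$, then note that $\lambda_\SS^*(M_x)$ is the cone of $x\otimes 1$, which vanishes exactly when $x$ becomes invertible over $A[\SS^{-1}]$. The extra remarks (fully faithfulness of $t$, compatibility of cones with base change) are just an unpacking of what the paper leaves implicit.
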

\begin{proof} By Proposition~\ref{equality of localizations}, $M_{x}\in\langle M_g\rangle_{g\in \SS}$ if and only if
the scalar extension $M_x\otimes_AA[\SS^{-1}]$ is a zero object of $\Perf A[\SS^{-1}]$ which happens if and only if $x$ is invertible over $A[\SS^{-1}]$.
\end{proof}

\begin{corollary}
\label{cor_Mgall}
Let $A$ be a free graded algebra, let $\SS\subset \Mat*(A)$ be a family of homomorphisms. Then $\langle M_{g}\rangle_{g\in  \SS}=\Perf A$ if and only if the localization $A[\SS^{-1}]$ is the zero ring.
\end{corollary}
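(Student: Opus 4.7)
The plan is to derive the corollary as an immediate consequence of Proposition~\ref{equality of localizations}, which already identifies $T:=\langle M_g\rangle_{g\in\SS}$ with $\ker \lambda_\SS^*$, where $\lambda_\SS^*\colon \Perf A\to \Perf A[\SS^{-1}]$ denotes scalar extension along the universal localization map $\lambda_\SS\colon A\to A[\SS^{-1}]$. Given this identification, the corollary reduces to comparing the condition ``$T=\Perf A$'' with the condition ``$\lambda_\SS^*$ is the zero functor,'' and the latter is patently equivalent to $A[\SS^{-1}]=0$.

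For the ``if'' direction I would argue as follows. If $A[\SS^{-1}]$ is the zero ring, then $\Perf A[\SS^{-1}]$ is the trivial triangulated category containing only the zero object, so $\lambda_\SS^*$ sends every object of $\Perf A$ to zero. Hence $\ker\lambda_\SS^*=\Perf A$, and Proposition~\ref{equality of localizations} gives $T=\Perf A$. For the ``only if'' direction, assume $T=\Perf A$; then in particular $A\in T=\ker \lambda_\SS^*$, so the rank-one free module $\lambda_\SS^*(A)\cong A[\SS^{-1}]$ is the zero object of $\Perf A[\SS^{-1}]$, which forces its underlying graded ring to vanish.

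A slicker alternative route is to apply Corollary~\ref{cor_G1G2} with $x$ the zero homomorphism $0\to A$, viewing the zero module as a finitely generated free graded $A$-module of rank $0$. Then $M_x\cong A$ classically generates $\Perf A$, so $\langle M_g\rangle_{g\in\SS}=\Perf A$ if and only if $A\in\langle M_g\rangle_{g\in\SS}$; and by Corollary~\ref{cor_G1G2} this is equivalent to the scalar-extended map $0\to A[\SS^{-1}]$ being an isomorphism, i.e., to $A[\SS^{-1}]=0$. I do not anticipate any real obstacle here: all the categorical-to-ring-theoretic work has been absorbed into Proposition~\ref{equality of localizations}, and the present corollary is a short verification of the extremal case in which the kernel of the localization functor is the entire category.
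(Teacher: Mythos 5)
Your argument is correct and is essentially the paper's own proof: the paper also uses Proposition~\ref{equality of localizations} to identify $\langle M_g\rangle_{g\in\SS}$ with $\ker\lambda_\SS^*$ and then tests the generator $A$, whose scalar extension is $A[\SS^{-1}]$. Your alternative route via Corollary~\ref{cor_G1G2} applied to $0\to A$ is also valid but is just a repackaging of the same step, since that corollary itself rests on the same proposition.
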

\begin{proof}
One has
\begin{multline*}
\langle M_{g}\rangle_{g\in  \SS}=\Perf A \Longleftrightarrow
A\in \langle M_{g}\rangle_{g\in  \SS}\Longleftrightarrow
A\in \ker \lambda_\SS^* \Longleftrightarrow
A[\SS^{-1}]=0\in\Perf A[\SS^{-1}] \Longleftrightarrow\\
\Longleftrightarrow A[\SS^{-1}]\quad\text{is a zero ring},
\end{multline*}
where $\lambda_\SS^*\colon \Perf A\to\Perf A[\SS^{-1}]$ denotes the scalar extension functor and the second equivalence is by Proposition~\ref{equality of localizations}.
\end{proof}

\section{Some applications}
\label{section_applications}
Now we get some applications to pure algebra. We do not know if these results can be deduced directly.

\begin{theorem}
\label{theorem_invert}
Let $A$ be a free graded algebra. Let $\XX\subset A^*$ be a subset. Let $\~\XX\subset A^*$ be the family provided by Proposition~\ref{prop_makegood}, in particular, one can take $\~\XX=\XX$ if $\XX$ is good. Then a homogeneous element $x\in A$ is invertible in
$A[\XX^{-1}]$ if and only if
$$x=\lambda\cdot \prod_{i=1}^n y_i\quad\text{for some}\quad \lambda\in\k^*, \quad y_i\in\~\XX \quad \text{and}\quad n\ge 0.$$
\end{theorem}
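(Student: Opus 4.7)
The plan is to recognize Theorem~\ref{theorem_invert} as a direct translation of Theorem~\ref{theorem_xprod2} through the localization--generation dictionary of Section~\ref{section_localization}: the non-trivial content is already packaged into Corollary~\ref{cor_G1G2}, Proposition~\ref{prop_makegood}, and Theorem~\ref{theorem_xprod}, and what remains is a bookkeeping identification of two kinds of universal localization.

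First, I would identify each homogeneous element $y \in \XX \subset A^*$ of degree $d_y$ with the rank-one morphism $A[-d_y] \xra{y} A$ in $\Mat*(A)$, and view $\XX$ also as a subfamily of $\Mat*(A)$. By the explicit construction of universal localization in Section~\ref{section_localization}, inverting this rank-one morphism imposes exactly the relations $y\,t = 1 = t\,y$; hence the ring-theoretic localization $A[\XX^{-1}]$ of the theorem statement coincides with the matrix-universal localization at the family $\XX \subset \Mat*(A)$, and the homogeneous element $x \in A$ is invertible in $A[\XX^{-1}]$ if and only if the morphism $A[-\deg x] \xra{x} A$ becomes an isomorphism after scalar extension along $\lambda_\XX \colon A \to A[\XX^{-1}]$.

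With this identification in place, Corollary~\ref{cor_G1G2} translates invertibility of $x$ into the categorical statement $M_x \in \langle M_y \rangle_{y \in \XX}$ in $\Perf A$. Since $A$ has no zero divisors, each $M_y$ is quasi-isomorphic to the cyclic module $A/yA$, so this condition becomes $A/xA \in \langle A/yA \rangle_{y \in \XX}$. Theorem~\ref{theorem_xprod2}, which already absorbs the replacement $\XX \leadsto \~\XX$ from Proposition~\ref{prop_makegood}, then gives precisely the claimed factorization $x = \lambda \cdot \prod_{i=1}^n y_i$ with $\lambda \in \k^*$ and $y_i \in \~\XX$. The only potentially delicate point is the very first step --- the coincidence of the ring-level and matrix-level universal localizations on rank-one morphisms --- but this is essentially tautological from the defining relations, and the edge case where $x$ fails to lie in $A^*$ is handled by Corollary~\ref{cor_Mgproper}, so no genuine obstacle remains.
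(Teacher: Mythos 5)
Your proposal is correct and follows essentially the same route as the paper, whose proof of Theorem~\ref{theorem_invert} is exactly the one-line combination of Corollary~\ref{cor_G1G2} and Theorem~\ref{theorem_xprod2}; your expansion of the identification between inverting a homogeneous element and inverting the corresponding rank-one morphism in $\Mat*(A)$ just makes explicit what the paper leaves tacit. The extra remark handling $x\notin A^*$ via Corollary~\ref{cor_Mgproper} (together with Corollary~\ref{cor_Mgall}) is a harmless, and in fact slightly more careful, addition.
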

\begin{proof}
Follows from Corollary~\ref{cor_G1G2} and Theorem~\ref{theorem_xprod2}.
\end{proof}

\begin{corollary}
\label{cor_locinjective}
Let $A$ be a free graded algebra. Let $\XX\subset A$ be a family of nonzero homogeneous elements. Then the localization $A[\XX^{-1}]$ is not a zero ring, moreover, the canonical map
$$A\to A[\XX^{-1}]$$
is injective.
\end{corollary}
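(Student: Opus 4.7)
The plan is to deduce both assertions from Corollary~\ref{cor_Mgproper} and Corollary~\ref{cor_Mgall}, together with nothing more than the universal property of $\lambda_\XX$. View each homogeneous $x\in\XX\subset A^*$ as the homomorphism of free graded modules $A[-\deg x]\xra{\,\cdot x\,}A$ in $\Mat^*(A)$, so that $M_x$ agrees with the cone from Definition~\ref{m-g}. This translation is the only bookkeeping step, and once it is made all the categorical results apply directly to $\XX$.

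For non-triviality of $A[\XX^{-1}]$, apply Corollary~\ref{cor_Mgproper}, which tells us that $\langle M_x\rangle_{x\in\XX}\subsetneq \Perf A$ for any such family $\XX$. By Corollary~\ref{cor_Mgall}, this strict containment is equivalent to $A[\XX^{-1}]$ being a nonzero ring.

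For injectivity, note that $\lambda_\XX\colon A\to A[\XX^{-1}]$ is a graded algebra homomorphism, so its kernel is a graded ideal; hence it suffices to rule out a nonzero homogeneous $a\in A^*$ with $\lambda_\XX(a)=0$. Given such an $a$, set $\XX':=\XX\cup\{a\}$ and show $A[(\XX')^{-1}]=0$, which contradicts what was just proved applied to $\XX'$ (still a family of nonzero homogeneous elements). To see the vanishing, let $\phi\colon A\to B$ be any graded homomorphism inverting $\XX'$. Since $\phi$ inverts $\XX$, the universal property of $\lambda_\XX$ gives a factorization $\phi=\psi\circ\lambda_\XX$ for a unique $\psi\colon A[\XX^{-1}]\to B$; but then $\phi(a)=\psi(\lambda_\XX(a))=\psi(0)=0$, while $\phi(a)$ is required to be invertible in $B$. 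This forces $B=0$, so the universal target $A[(\XX')^{-1}]$ is the zero ring, the desired contradiction.

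There is no serious obstacle here: the argument is formal once the first part is in place. The only subtlety worth flagging is to be careful that $\XX'$ really is a legitimate family of nonzero homogeneous elements, which is guaranteed by the assumption $a\neq 0$ and the homogeneity reduction; and to observe that inverting an element that has already been killed forces the ring to be trivial, which is precisely what lets the first part of the corollary be bootstrapped into the second.
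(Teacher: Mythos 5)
Your proof is correct and follows essentially the same route as the paper: non-vanishing of the localization comes from Corollary~\ref{cor_Mgall} combined with Corollary~\ref{cor_Mgproper}, and injectivity from the observation that a homogeneous kernel element would have to become invertible in a nonzero localization. The only cosmetic difference is that the paper reduces once and for all to the maximal family $\XX=A^*$ (so that injectivity of $A\to A[(A^*)^{-1}]$ is immediate and the general case follows by factoring), whereas you adjoin the putative kernel element $a$ to $\XX$ and contradict the first part applied to $\XX\cup\{a\}$.
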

\begin{proof}
It suffices to check the statements for $\XX=A^*$ being the set of all nonzero homogeneous elements in $A$. Indeed, the localization $A\to A[(A^*)^{-1}]$ factors through any other localization  $A\to A[\XX^{-1}]$. So we assume $\XX=A^*$.

By Corollary~\ref{cor_Mgall}, if the ring $A[(A^*)^{-1}]$ is zero  then the modules
$A/xA$, $x\in A^*$, generate the category $\Perf A$. This contradicts Corollary~\ref{cor_Mgproper}.

Thus the ring $A[(A^*)^{-1}]$ is nonzero. Consequently, the localization homomorphism $f\colon A\to A[(A^*)^{-1}]$ is injective: $f(a)$ is invertible in $A[(A^*)^{-1}]$ for any $a\in A^*$, hence $f(a)\ne 0$.
\end{proof}

\begin{example}
Let $A=\k\{x,y,\ldots\}$ be a free graded algebra.
\begin{enumerate}
\item Let $g=x$ or $xy$. Then the only elements in $A$ that are invertible in $A[g^{-1}]$ are powers of $g$ (up to scalars).
\item Let $g=xyx$. Then all monomials in $x$ and $y$ are invertible in $A[g^{-1}]$.
\end{enumerate}
\end{example}

\section{Support}


In this final section we introduce a notion of support for an object in $\Perf A$, which sometimes helps to distinguish between different subcategories in $\Perf A$.

Let $A^{ab}$ be the quotient of $A$ modulo its commutator ideal, clearly $A^{ab}$ is the graded polynomial algebra $\k[x_1,\ldots,x_N]$.  Consider $\Proj A^{ab}\cong \P^{N-1}$ ---
the set of prime homogeneous ideals in $A^{ab}$ different from the augmentation ideal.  $\Proj A^{ab}$ is equipped
with Zariski topology. For a graded $A^{ab}$-module $N$ its support is defined as the set of all $\p\in \Proj A^{ab}$ such that the homogeneous localization $N_{\p}$ is nonzero. Then the support of a finitely generated module is closed in Zariski topology.

For an object $M\in \Perf A$ consider its abelianization $M^{ab}=M\otimes^L_AA^{ab}$.  Cohomology of $M^{ab}$ is a graded finitely generated $A^{ab}$-module. Define the support $\Supp M\subset \Proj A^{ab}$ as the support of $H(M^{ab})$, it is a Zariski closed set.

\begin{lemma}
The following readily holds:
\label{lemma_suppprop}
\begin{enumerate}
\item $\Supp M=\Supp M[n]$ for any $M\in\Perf A$ and $n\in\Z$;
\item for a distinguished triangle $M_1\to M_2\to M_3\to M_1[1]$ in $\Perf A$ one has
$$\Supp M_2\subset \Supp M_1\cup \Supp M_3;$$
\item for any $N\in \langle M\rangle\subset \Perf A$ one has $\Supp N\subset \Supp M$.
\end{enumerate}
\end{lemma}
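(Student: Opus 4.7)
All three statements will follow from the fact that the abelianization functor $(-)^{ab} = (-)\otimes^L_A A^{ab}$ is a triangulated functor $\Perf A \to \Perf A^{ab}$, combined with standard behavior of support of finitely generated graded modules over the Noetherian graded commutative ring $A^{ab}$ (in particular, exactness of homogeneous localization at a prime).

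For (1), the functor $(-)^{ab}$ commutes with shifts, hence $H(M[n]^{ab}) \cong H(M^{ab})[n]$ as graded $A^{ab}$-modules. For any homogeneous prime $\p$ of $A^{ab}$, the homogeneous localization at $\p$ commutes with shifts of grading and sends a nonzero shifted module to a nonzero one, so the set of $\p$ at which the localization is nonzero is unchanged by shifting. This yields $\Supp M = \Supp M[n]$.

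For (2), I would apply the triangulated functor $(-)^{ab}$ to the given triangle to produce a distinguished triangle
\[
M_1^{ab}\to M_2^{ab}\to M_3^{ab}\to M_1^{ab}[1]
\]
in $\Perf A^{ab}$, which gives a long exact sequence of finitely generated graded $A^{ab}$-modules $H^i(M_j^{ab})$. For a prime $\p \notin \Supp M_1 \cup \Supp M_3$, localization at $\p$ is exact and kills $H^i(M_1^{ab})$ and $H^i(M_3^{ab})$ for all $i$; by exactness it kills $H^i(M_2^{ab})$ as well, so $\p\notin \Supp M_2$.

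For (3), I would consider the full subcategory
\[
\mathcal{C}_M := \{N \in \Perf A \mid \Supp N \subset \Supp M\}
\]
and verify that it is a thick triangulated subcategory containing $M$; then $\langle M\rangle\subset \mathcal{C}_M$ by minimality. It contains $M$ trivially; closure under shifts is (1); closure under extensions (in either direction) follows by rotating triangles and applying (1) and (2). For closure under summands, if $N\oplus N'\in \mathcal{C}_M$ then $(N\oplus N')^{ab}\cong N^{ab}\oplus (N')^{ab}$, whence $\Supp(N\oplus N') = \Supp N \cup \Supp N'$ and both summands lie in $\mathcal{C}_M$. There is no real obstacle here: the lemma is a formal consequence of the triangulated nature of abelianization and elementary support theory, and the only mild care needed is to check compatibility with the \emph{graded} (as opposed to ungraded) localization.
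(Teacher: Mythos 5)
Your argument is correct and is exactly the routine verification the paper has in mind when it says the lemma ``readily holds'' without proof: abelianization $(-)\otimes^L_A A^{ab}$ is triangulated, homogeneous localization is exact, and (3) follows by checking that $\{N\mid \Supp N\subset\Supp M\}$ is a thick triangulated subcategory containing $M$. No gaps; the shift and direct-summand points are handled properly.
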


\begin{lemma}
\label{lemma_suppbig}
Let $M\in \Perf A$ be a nonzero object. Then $\Supp M\ne \emptyset$  and
$$\codim \Supp M\le 1.$$ Moreover, if $M=Cone(F_1\to F_0)$ where $F_0$ and $F_1$ are free $A$-modules and $\rank F_0\ne \rank F_1$ then $\Supp M=\P^{N-1}$.
\end{lemma}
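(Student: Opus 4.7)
The plan is to invoke Proposition~\ref{prop_module} to represent $M$ as $Cone(g\colon F_1\to F_0)$ for an injective degree-preserving homomorphism $g$ between finitely generated free graded $A$-modules; then $H(M)\cong \coker g$ is a nonzero, finitely generated, bounded below graded $A$-module. Applying $-\otimes^L_AA^{ab}$ produces $M^{ab}\simeq Cone(g^{ab}\colon F_1^{ab}\to F_0^{ab})$ in $\Perf A^{ab}$, with $F_i^{ab}$ finitely generated free graded $A^{ab}$-modules of ranks $q=\rank F_1$ and $p=\rank F_0$. If $p\ne q$, then localizing at the generic point of $\Proj A^{ab}$ produces a two-term complex of $\Frac A^{ab}$-vector spaces of Euler characteristic $p-q\ne 0$, whose cohomology is therefore nonzero; so $\Supp M$ contains the generic point and hence equals all of $\P^{N-1}$. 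This disposes of the ``moreover'' clause.

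Assume now $p=q$, so that $\det g^{ab}\in A^{ab}$ is a well-defined homogeneous element. The heart of the argument is to verify that $\det g^{ab}$ is \emph{not} a unit in $A^{ab}$. Otherwise $g^{ab}$ would be an isomorphism and $\coker g^{ab}=0$; but $\coker g^{ab}\cong H(M)\otimes_AA^{ab}$, and graded Nakayama applied to the commutator ideal $I=\ker(A\to A^{ab})$ would then give a contradiction. Specifically, since $I$ is generated by the elements $[x_i,x_j]$, one has $I\subset A_{\ge 2}$; therefore, if $e$ is the least degree with $H(M)_e\ne 0$, every degree-$e$ element of $H(M)\cdot I$ vanishes, forcing $(H(M)\otimes_AA^{ab})_e=H(M)_e\ne 0$.

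With $\det g^{ab}$ a non-unit in hand, the rest reduces to standard commutative algebra over $A^{ab}$. If $\det g^{ab}=0$, then $g^{ab}$ fails to be generically injective, $\ker g^{ab}$ has full support, and $\Supp M=\P^{N-1}$. If $\det g^{ab}\ne 0$, then $\ker g^{ab}=0$ (a torsion-free submodule of $F_1^{ab}$ whose generic stalk vanishes), and a Fitting-ideal/Cramer's-rule argument identifies $\Supp\coker g^{ab}$ with $V_+(\det g^{ab})$, which is a proper nonempty hypersurface in $\P^{N-1}$, of codimension exactly~$1$. Either way $\Supp M\ne\emptyset$ and $\codim\Supp M\le 1$.

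The main obstacle is the Nakayama step: one must rule out the possibility that a non-invertible square matrix $g$ over the free algebra $A$ becomes invertible after passing to the commutative quotient $A^{ab}$. This is where the bound $I\subset A_{\ge 2}$ plays an essential role; the rest of the proof is Euler-characteristic bookkeeping and elementary support computations over a polynomial ring.
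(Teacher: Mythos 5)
Your proof is correct, and its outer skeleton agrees with the paper's: represent $M$ as the cone of an injective $g\colon F_1\to F_0$ between finitely generated free graded modules (Proposition~\ref{prop_module}), abelianize, and detect the generic point by a rank count to get the ``moreover'' clause. The middle of the argument, however, is genuinely different. The paper does not split according to whether $\rank F_0=\rank F_1$: it writes $H(M^{ab})=\ker(g^{ab})[1]\oplus\coker(g^{ab})$, gets full support when $g^{ab}$ is not injective, and when $g^{ab}$ is injective bounds $\codim\Supp M$ by $\pd\coker g^{ab}\le 1$ via standard depth/projective-dimension results (\cite[Cor.~18.5, Th.~18.7]{Ei}); nonemptiness is deduced from $F_0\not\cong F_1$, hence $F_0^{ab}\not\cong F_1^{ab}$, hence $H(M^{ab})\ne 0$ (with the passage from ``nonzero module'' to ``nonempty support in $\Proj$'' resting implicitly on $\codim\le 1<N$). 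You instead handle the square case by Cramer/Fitting ideals, identifying $\Supp M$ with $V_+(\det g^{ab})$, and rule out $\det g^{ab}$ being a unit by the lowest-degree (graded Nakayama) argument using $\coker g^{ab}\cong H(M)\otimes_AA^{ab}$ and $\ker(A\to A^{ab})\subset A_{\ge 2}$. Your route is more self-contained (no appeal to the cited homological facts), makes nonemptiness in $\Proj$ completely explicit, and as a byproduct computes $\Supp M$ exactly in the injective square case; the paper's route is shorter and uniform in the ranks. Both arguments use the standing assumption $N\ge 2$, in your case to guarantee $V_+(\det g^{ab})\ne\emptyset$.
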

\begin{proof}
By Corollary~\ref{cor_module}, $M$ is quasi-isomorphic to $M_G=Cone(F_1\xra{G} F_0)$
for some morphism between free $A$-modules of finite rank $F_0$ and $F_1$, given by a matrix $G\in \Mat*(A)$. Then $M^{ab}=Cone(F_1^{ab}\xra{G^{ab}} F_0^{ab})$, where $F_0^{ab}, F_1^{ab}$ are free graded $\k[x_1,\ldots,x_N]$-modules. One has $H(M^{ab})=\ker (G^{ab})[1]\oplus \coker (G^{ab})$. If $G^{ab}$ is not injective then $\ker (G^{ab})$ is torsion-free and thus $\Supp M=\P^{N-1}$. If $G^{ab}$ is injective then $\pd \coker G^{ab}\le 1$ and thus  $\codim \Supp M=\codim \Supp \coker G^{ab}\le 1$ (see for example \cite[Cor. 18.5 and Th. 18.7]{Ei}).

To see that the support of $M$ is non-empty, note that the free graded modules $F_0$ and $F_1$ are not isomorphic. Hence the free graded modules $F_0^{ab}$ and $F_1^{ab}$ also are not isomorphic and  $H(M^{ab})\ne 0$.

For the last statement, note that the localization of $G^{ab}$ at the generic point $\eta\in \Proj A^{ab}$ is not an isomorphism  (as $\rank F_0^{ab}\ne \rank F_1^{ab}$). Therefore $\eta\in\Supp M$ and consequently $\Supp M=\Proj A^{ab}$.
\end{proof}

For a homogeneous polynomial $f\in \k[x_1,\ldots,x_N]$ denote by $Z(f)\subset \P^{N-1}$ the zero locus of $f$. It is a hypersurface provided that $\deg f>0$.

\begin{lemma}
\label{lemma_MgSupp}
Let $x\in A^*$, let $x^{ab}$ be its image in $A^{ab}$. Then
\begin{equation*}
\Supp M_x=
\begin{cases} \P^{N-1}, & \text{if}\  x^{ab}=0;\\
Z(x^{ab}), & \text{if}\ x^{ab}\ne 0.
\end{cases}
\end{equation*}
\end{lemma}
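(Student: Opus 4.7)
The plan is a direct computation of $M_x^{ab}$ and its support, organized as follows.

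First I would compute $M_x^{ab}=M_x\otimes^L_A A^{ab}$ explicitly. Since by Definition~\ref{m-g} the module $M_x$ is the cone of $A[-d]\xra{x}A$, i.e.\ a two-term complex of free $A$-modules, it is h-projective, so the derived tensor product coincides with the ordinary one. Thus
\[
M_x^{ab}\;\cong\;\mathrm{Cone}\!\left(A^{ab}[-d]\xra{x^{ab}}A^{ab}\right),
\]
and (as in the proof of Lemma~\ref{lemma_suppbig}) its cohomology splits as
\[
H(M_x^{ab})\;\cong\;\ker(x^{ab})[1-d]\;\oplus\;\coker(x^{ab}),
\]
viewed as graded $A^{ab}$-modules, where $x^{ab}$ acts by multiplication on the free rank-one module $A^{ab}$.

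Next I would treat the two cases. If $x^{ab}=0$, the map $x^{ab}\colon A^{ab}[-d]\to A^{ab}$ is zero, so $H(M_x^{ab})\cong A^{ab}[1-d]\oplus A^{ab}$. Since $A^{ab}=\k[x_1,\ldots,x_N]$ is a domain, it is a torsion-free graded module, so $(A^{ab})_\p\ne0$ for every $\p\in\Proj A^{ab}$, giving $\Supp M_x=\P^{N-1}$. If instead $x^{ab}\ne 0$, then because $A^{ab}$ is a graded integral domain the multiplication $x^{ab}\colon A^{ab}[-d]\to A^{ab}$ is injective, so the kernel vanishes and $H(M_x^{ab})\cong A^{ab}/(x^{ab})$. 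The graded support of a cyclic module $A^{ab}/(x^{ab})$ on $\Proj A^{ab}\cong\P^{N-1}$ is, by standard commutative algebra, the set of relevant homogeneous primes containing $x^{ab}$, which is exactly $Z(x^{ab})$.

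The argument is almost entirely formal, so I do not expect a serious obstacle. The only points requiring a line of care are: (i) verifying that the two-term free complex $M_x$ genuinely represents the derived tensor product (so that one can compute $M_x^{ab}$ naively), and (ii) checking the edge cases so that the formula is consistent with the conventions of the paper (if $\deg x=0$ and $x^{ab}\ne 0$ then $x\in\k^*$, hence $M_x=0$ and both sides are empty; if $\deg x>0$ and $x^{ab}\ne 0$ then $Z(x^{ab})$ is a genuine hypersurface, agreeing with the codimension bound from Lemma~\ref{lemma_suppbig}). Once these points are dispensed with, both cases follow by inspection.
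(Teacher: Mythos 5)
Your computation is correct and is exactly the argument the paper leaves implicit (its proof is simply ``Clear''), mirroring the kernel/cokernel splitting already used in the proof of Lemma~\ref{lemma_suppbig}. No issues: the two cases $x^{ab}=0$ and $x^{ab}\ne 0$ are handled properly, and the edge-case remarks about $\deg x=0$ are consistent with the statement.
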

\begin{proof}
Clear.
\end{proof}

\begin{prop}
\label{prop_support}
Let $x,y\in A^*$.
\begin{enumerate}
\item Suppose $Z(x^{ab})\not\subset Z(y^{ab})$. Then $M_{x}\notin \langle M_{y}\rangle$.
\item Suppose $y^{ab}\ne 0$. Let $M=Cone(F_1\to F_0)$ where $F_0,F_1$ are free $A$-modules and $\rank F_0\ne \rank F_1$. Then $M\notin \langle M_{y}\rangle$.
\item Suppose $Z(x^{ab})\cap Z(y^{ab})$ has $\codim \ge 2$ in $\P^{N-1}$. Then
$$\langle M_{x}\rangle\cap \langle M_{y}\rangle=0.$$
\end{enumerate}
\end{prop}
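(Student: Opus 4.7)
The plan is to derive all three parts from the support invariant developed just above: Lemma~\ref{lemma_suppprop}(3) (support is monotone under classical generation), together with the explicit values of $\Supp M_x$ supplied by Lemma~\ref{lemma_MgSupp} and the codimension bound of Lemma~\ref{lemma_suppbig}. In each part the strategy is the same: exhibit an object whose support is too large to fit inside the support of the relevant thick subcategory.

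For (1), I would first observe that the hypothesis $Z(x^{ab}) \not\subset Z(y^{ab})$ forces $y^{ab} \neq 0$, since otherwise $Z(y^{ab}) = \P^{N-1}$ would trivially contain $Z(x^{ab})$. Hence $\Supp M_y = Z(y^{ab})$ by Lemma~\ref{lemma_MgSupp}, while in either case of that same lemma $\Supp M_x$ contains $Z(x^{ab})$. If $M_x$ were in $\langle M_y\rangle$, Lemma~\ref{lemma_suppprop}(3) would yield $Z(x^{ab}) \subset \Supp M_x \subset \Supp M_y = Z(y^{ab})$, a contradiction.

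For (2), the last sentence of Lemma~\ref{lemma_suppbig} gives $\Supp M = \P^{N-1}$ directly from $\rank F_0 \neq \rank F_1$. Since $y^{ab} \neq 0$, Lemma~\ref{lemma_MgSupp} makes $\Supp M_y$ a proper subset of $\P^{N-1}$ (either the hypersurface $Z(y^{ab})$ when $\deg y \geq 1$, or the empty set when $y \in \k^*$ is a unit, in which case $M_y = 0$). Lemma~\ref{lemma_suppprop}(3) then precludes $M \in \langle M_y\rangle$.

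For (3), I would take an arbitrary $N \in \langle M_x\rangle \cap \langle M_y\rangle$ and apply Lemma~\ref{lemma_suppprop}(3) twice to deduce $\Supp N \subset \Supp M_x \cap \Supp M_y$. The aim is to show that this intersection has codimension $\geq 2$, after which Lemma~\ref{lemma_suppbig} forces $N = 0$. This requires a short case analysis: if $x$ or $y$ is a unit then the corresponding $M_x$ or $M_y$ is zero and the claim is trivial; if both have positive degree but one of $x^{ab}, y^{ab}$ vanishes, then $Z(x^{ab}) \cap Z(y^{ab})$ collapses to the zero locus of the other element, which has codimension $\leq 1$ unless that other element is itself a unit (already handled); in the remaining generic case $x^{ab}, y^{ab} \neq 0$ and Lemma~\ref{lemma_MgSupp} identifies $\Supp M_x \cap \Supp M_y$ with $Z(x^{ab}) \cap Z(y^{ab})$, which has codimension $\geq 2$ by hypothesis. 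No step here is hard; the only real bookkeeping is this case-splitting, and I do not anticipate an obstacle.
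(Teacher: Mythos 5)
Your argument is correct and is essentially the paper's own proof: all three parts are deduced from the monotonicity of support under generation (Lemma~\ref{lemma_suppprop}(3)) together with the computation of $\Supp M_x$ in Lemma~\ref{lemma_MgSupp} and the codimension bound of Lemma~\ref{lemma_suppbig}. The only difference is that you spell out the degenerate cases (units, vanishing abelianizations) which the paper leaves implicit.
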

\begin{proof}
(1) and (2) follow from Lemmas~\ref{lemma_suppprop},~\ref{lemma_MgSupp} and~\ref{lemma_suppbig}. For (3), assume $M\in \langle M_{x}\rangle\cap \langle M_{y}\rangle$ and $M\ne 0$. Then $\Supp M\subset \Supp M_{x}\cap \Supp M_{y}=Z(x^{ab})\cap Z(y^{ab})$ and thus $\Supp M$ has codimension $\ge 2$. It contradicts to Lemma~\ref{lemma_suppbig}.
\end{proof}

\begin{remark} Parts (1) and (2) of
Proposition~\ref{prop_support} can be proved without using supports. They follow from Corollary~\ref{cor_G1G2}. Part (3) does not follow from  Corollary~\ref{cor_G1G2} in general. But in some cases (like $x,y$ are good) part (3) follows from the minimality, see Proposition~\ref{prop_Mxminimal}.
\end{remark}


\begin{thebibliography}{ELS18}


\bibitem[CI15]{CI} Jon F.\,Carlson, Srikanth B.\,Iyengar, ``Thick subcategories of the bounded derived category of a finite group'', {\it Trans. of the AMS}, 367:4 (2015), 2703--2717.

\bibitem[CY15]{CY} Xiao-Wu Chen, Dong Yang, ``Homotopy categories, Leavitt path algebras and Gorenstein projective modules'', 	\textit{IMRN}, 10 (2015), 2597--2633.


\bibitem[Co71]{Co} Paul M.\,Cohn, \textit{Free rings and their relations},
Academic Press, London,  New York, 1971.

\bibitem[Co63]{Co2} Paul M.\,Cohn, ``Noncommutative unique factorization domains'',
{\it Trans. of the AMS}, 109 (1963), 313--332.

\bibitem[Ei95]{Ei} David Eisenbud, \textit{Commutative algebra, with a view toward algebraic geometry}, Graduate Text in Mathematics 150, Springer-Verlag, 1969.




\bibitem[EL18]{EL}
Alexey Elagin, Valery A.\,Lunts, ``Regular subcategories in bounded derived categories of affine schemes'',
Sb. Math., 209:12 (2018), 1756--1782.


\bibitem[ELS18]{ELS}
Alexey Elagin, Valery A. Lunts and Olaf M. Schnurer, ``Smoothness of derived categories of algebras'', arXiv:1810.07626v1.

\bibitem[KY18]{KY} Martin Kalck, Dong Yang, ``Relative singularity categories II: DG models'', arXiv:1803.08192v1.

\bibitem[LO10]{LO} Valery A. Lunts and Dmitri Orlov, ``Uniqueness of enhancement for triangulated categories'', J. Amer. Math. Soc. 23 (2010), 3, 853--908.

\bibitem[LS16]{LS}
Valery A. Lunts and Olaf M. Schnurer, ``New enhancements of derived categories of coherent
sheaves and applications'', J. Algebra, 446 (2016), 203--274.

\bibitem[Lu10]{Lu} Valery A. Lunts, ``Categorical resolution of singularities'', J. Algebra 323 (2010), no. 10, 2977-3003.

\bibitem[Ne92a]{HN}
Amnon Neeman, ``The chromatic tower for D(R)'', {\it Topology}, 31:3 (1992), 519--532.



\bibitem[Ne92b]{Ne2}
Amnon Neeman, ``The connection between the K-theory localization theorem of Thomason,
Trobaugh and Yao and the smashing subcategories of Bousfield and Ravenel'',
{\it Annales scientifiques de lENS}, 4e 
serie, 25:5 (1992),  547--566.

\bibitem[NB93]{NeBo} Amnon Neeman and Marcel B\"{o}kstedt,    ``Homotopy limits in triangulated categories'', Compositio Mathematica, 86:2 (1993),  209--234.

\bibitem[NR04]{NR}
Amnon Neeman, Andrew Ranicki, ``Noncommutative localisation in algebraic K-theory I'',
{\it Geometry and Topology},
8 (2004), 1385--1425.

\bibitem[Ne]{Ne} Amnon Neeman, Private communication.


\bibitem[St14]{St} Greg Stevenson, ``Subcategories of singularity categories via tensor actions'', {\it Comp. Math.}, 150(2), 2014, 229--272.
\end{thebibliography}
\end{document}